\documentclass[11pt]{article}
\usepackage{authblk}
\usepackage{amsmath,amssymb,mathrsfs}
\usepackage{enumitem}
\usepackage{verbatim}
\usepackage{color}
\usepackage[margin=1in]{geometry}
\usepackage{setspace}
\usepackage{graphics,graphicx}
\usepackage{pstricks,pst-node,pst-tree}
\usepackage{tikz-cd}

\newcommand{\lang}{\mathcal{L}}
\newcommand{\limp}{\longrightarrow}
\newcommand{\qneg}{\sim\!}
\newcommand{\pow}{\mathcal{P}}

\newcommand{\var}{\mathrm{var}}
\newcommand{\NN}{\mathbb{N}}
\newcommand{\ZZ}{\mathbb{Z}}

\parindent=0pt
\parskip=5pt

\usepackage{amsthm}
\theoremstyle{definition}
\newtheorem{thm}{Theorem}[section]
\newtheorem{lem}[thm]{Lemma}
\newtheorem{cor}[thm]{Corollary}
\newtheorem{dfn}[thm]{Definition}
\newtheorem{rem}[thm]{Remark}
\newtheorem{exa}[thm]{Example}

\usepackage[backref=page,colorlinks=true,allcolors=blue]{hyperref}

  \title{Generalized explosion principles\footnote{The final version of this article has been submitted to Studia Logica.}}
  \author[1]{Sankha S. Basu}
  \author[1]{Sayantan Roy}
  \date{May 27, 2024}
  \affil[1]{Department of Mathematics\\
  Indraprastha Institute of Information Technology-Delhi\\
  New Delhi, India.}

\begin{document}
\maketitle

\begin{abstract}
    \noindent Paraconsistency is commonly defined and/or characterized as the failure of a principle of explosion. The various standard forms of explosion involve one or more logical operators or connectives, among which the negation operator is the most frequent and primary. In this article, we start by asking whether a negation operator is essential for describing explosion and paraconsistency. In other words, is it possible to describe a principle of explosion and hence a notion of paraconsistency that is independent of connectives? A negation-free paraconsistency resulting from the failure of a generalized principle of explosion is presented first. We also derive a notion of \emph{quasi-negation} from this and investigate its properties. Next, more general principles of explosion are considered. These are also negation-free; moreover, these principles gradually move away from the idea that an explosion requires a statement and its opposite. Thus, these principles can capture the explosion observed in logics where a statement and its negation explode only in the presence of additional information, such as in the logics of formal inconsistency.
\end{abstract}
\textbf{Keywords:}
Paraconsistency, Principles of explosion, Partial explosion, Negation-free paraconsistency, Quasi-negation, 

\tableofcontents

\section{Introduction}
A principle of explosion can be described broadly as a rule that allows one to entail every $\beta\in\lang$, where $\langle\lang,\vdash\rangle$ is a logic, from a $\Gamma\subsetneq\lang$, i.e., $\Gamma\vdash\beta$ for all $\beta\in\lang$. The set $\Gamma$ is then said to be \emph{trivial} or to \emph{explode}. Perhaps the most discussed principle of explosion is the \emph{ex contradictione sequitur quodlibet (ECQ)} which asserts that any $\Gamma\subseteq\lang$ that contains, for some $\alpha\in\lang$, the pair $\alpha,\neg\alpha$, where $\neg$ is a negation operator, explodes. Assuming the Tarskian monotonicity condition, this is then expressed symbolically as follows. For any $\alpha\in\lang$,
\[
\{\alpha,\neg\alpha\}\vdash\beta\quad\hbox{ for all }\beta\in\lang.
\]
However, principles of explosion come in various other forms, as discussed in \cite{Robles2009}. These different forms of explosion are all equivalent in classical logic but can differ in case of weaker logics. 

Paraconsistency is characterized by the failure of a principle of explosion. Thus, failure of different principles of explosion lead to different notions of paraconsistency. Hence it is possible to have logics paraconsistent in one sense but not in the other. We have discussed this phenomenon in more detail in \cite{BasuRoy2022}. An observation we made there is the dependence of all but one principle of explosion on a negation operator. This is not surprising considering that a contradiction can be described as the simultaneous assertion and denial of a statement, the latter of which is often regarded as equivalent to accepting the negation of the statement. However, it is not hard to see that the denial of a statement may not amount to the acceptance of its negation and that a statement can oppose another without being the negation of it (see, e.g., the discussion on this in \cite[Chapter 1]{CarnielliConiglio2016}).

There is, however, one principle of explosion, and hence a corresponding notion of paraconsistency, that is slightly different as it does not explicitly depend on negation. Instead, it depends on the presence of a nullary operator or constant and its properties. Suppose $\mathcal{S}=\langle\lang,\vdash\rangle$ is a logic and $\bot\in\lang$ is a nullary operator or constant such that $\{\bot\}\vdash\beta$ for all $\beta\in\lang$. This is a principle of explosion that we can call $\bot$-ECQ\footnote{This is also referred to as the principle of \emph{ex falso sequitur quodlibet}, as in \cite{CarnielliConiglio2016, Carnielli2007}.}, and such a constant, a \emph{falsity constant} for the logic $\mathcal{S}$. A logic $\mathcal{S}=\langle\lang,\vdash\rangle$ is said to be \emph{$\bot$-paraconsistent} if there is no falsity constant in $\lang$ for the logic.

Now, as discussed in \cite{Robles2009}, a falsity constant $\bot$ (referred to as ``Das Absurde'' there) can be understood in multiple ways. Under certain understandings, $\bot$-paraconsistency is essentially the same as paraconsistency, while in others, they fork. Moreover, there are understandings of $\bot$ that make the notion of $\bot$-paraconsistency inherently dependent on negation, while in others where $\bot$ is not definitionally eliminable (see \cite{RoblesMendez2008}), that is not the case. In the latter case, although this notion of paraconsistency is different from that of paraconsistency and independent of negation, it requires the presence of another logical operator, viz., $\bot$.

The broad question we dealt with in \cite{BasuRoy2022} is the following. Is it possible to describe paraconsistency (and thus, also explosion) independently of the language or connectives, especially negation? One motivation for this question comes from universal logic \cite{Beziau1994,Beziau2006}. The fundamental notion in universal logic is that of a logical structure, which is a pair of the form $\langle\lang,\vdash\rangle$, where $\lang$ is a set and $\vdash\,\subseteq\pow(\lang)\times\lang$. A logical structure is thus defined without regard to connectives, and the set $\lang$ need not be a formula algebra as in a logic. The above question can therefore be rephrased as follows. Can paraconsistency be defined in the context of universal logic, i.e., is it possible to define something like a \emph{paraconsistent logical structure}? We do not focus on the distinction between a logic and a logical structure here and use the former term throughout the article at the cost of a slight abuse of terminology. However, unless explicitly stated, $\lang$ is not assumed to be a formula algebra or even an algebra. In some of these special cases, where $\mathcal{S}=\langle\lang,\vdash\rangle$ is a logic such that $\lang$ is an algebra, and $\neg$ is a unary operator in $\mathcal{S}$, with respect to which we discuss explosion or the failure of it, one might think of these as negation operators. Keeping in mind that there might be more than one such unary operator in $\mathcal{S}$, we will refer to ECQ written in terms of $\neg$ as $\neg$-ECQ, dropping the prefix only when there is no chance of confusion or when commenting in general about $\neg$-ECQ for some unary operator $\neg$. If $\neg$-ECQ fails in $\mathcal{S}$, then the logic $\mathcal{S}$ will be called paraconsistent with respect to $\neg$. 

We have freely moved back and forth between the relation $\vdash$ and its corresponding operator $C_\vdash:\pow(\lang)\to\pow(\lang)$. We outline the details below.

Let $\mathcal{S}=\langle\lang,\vdash\rangle$ be a logic and $\Gamma\subseteq\lang$. Then $C_{\vdash}^{\mathcal{S}}(\Gamma)$ denotes the set of all elements of $\lang$ that are $\vdash$-related to $\Gamma$, i.e., $C_{\vdash}^{\mathcal{S}}(\Gamma)=\{\alpha\in \lang:\Gamma\vdash \alpha\}$ (the superscript $\mathcal{S}$ is dropped when there is no chance of confusion). It is easy to see that given a $\vdash$, $C_\vdash$, as described above, is an operator from $\pow(\lang)$ to $\pow(\lang)$. Conversely, given an operator $C:\pow(\lang)\to\pow(\lang)$, we can define a relation $\vdash\,\subseteq\pow(\lang)\times\lang$, such that $C=C_\vdash$, as follows. For all $\Gamma\cup\{\alpha\}\subseteq\lang$, $\Gamma\vdash\alpha$ iff $\alpha\in C(\Gamma)$. We will, on some occasions in this article, define a $\vdash\,\subseteq\pow(\lang)\times\lang$ such that $C_\vdash=C$ for some operator $C:\pow(\lang)\to\pow(\lang)$ in this very sense. 

The following notational convention has been adopted in this article. Given a logic $\langle\lang,\vdash\rangle$, for any $\alpha,\beta\in\lang$, if $\{\alpha\}\vdash\beta$, we will write $\alpha\vdash\beta$ instead. In the same vein, for any $\alpha\in\lang$, we will write $C_\vdash(\alpha)$ instead of $C_\vdash(\{\alpha\})$.

Section \ref{sec:nfp} discusses and extends one of the answers provided in \cite{BasuRoy2022} to the broad question mentioned above. This is achieved via a generalization of ECQ. Here we include the discussion on a \emph{quasi-negation}, also introduced in \cite{BasuRoy2022}. This, in a way, points to the extent to which negation is fundamental to the notion of paraconsistency.

In the remainder of the paper, our focus rests primarily on the notion of explosion. The main question thus becomes the following. What is a principle of explosion? We discuss various possible alternatives to the already-known principles of explosion and their interconnections. Thus, in a way, this generalizes and extends the discussion started in \cite{BasuRoy2022}. However, we neither claim nor believe that the list of principles of explosion provided here is exhaustive.

\section{Negation-free paraconsistency}\label{sec:nfp}
The material in this section is an extension of \cite[Section 2]{BasuRoy2022}. We have included the results from there, without proof, along with the new developments, for context and continuity.

The standard principle of explosion, ECQ, can be generalized as follows. For each $\alpha\in\lang$ in a logic $\mathcal{S}=\langle\lang,\vdash\rangle$, there exists $\beta\in\lang$ such that 
\[
\{\alpha,\beta\}\vdash\gamma, \hbox{ where }\gamma\in\lang \hbox{ is arbitrary}.
\]
We call this rule gECQ (the letter `g' denoting `generalized'). In other words, gECQ says that for each $\alpha\in\lang$, there exists $\beta\in\lang$ such that $C_\vdash(\{\alpha,\beta\})=\lang$. Then a type of paraconsistency can be defined without referring to a negation operator as follows.

\begin{dfn}
A logic $\mathcal{S}=\langle\lang,\vdash\rangle$ is \emph{NF-paraconsistent} (NF stands for negation-free) if gECQ fails in it, i.e., there exists an $\alpha\in\lang$ such that for every $\beta\in\lang$, $C_\vdash(\{\alpha,\beta\})\neq\lang$.
\end{dfn}

\begin{thm}\label{thm:NFpara->para}\cite[Theorem 2.2]{BasuRoy2022}
Suppose $\mathcal{S}=\langle\lang,\vdash\rangle$ is an NF-paraconsistent logic with a unary operator $\neg$. Then $\mathcal{S}$ is also paraconsistent with respect to $\neg$.
\end{thm}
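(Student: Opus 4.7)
The statement is an immediate unpacking of the two definitions, so I would aim for a short direct proof rather than anything with substantive machinery.

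The plan is to take the witness $\alpha$ from the hypothesis that gECQ fails, and plug in $\beta = \neg\alpha$ to obtain the witness required for the failure of $\neg$-ECQ. Concretely, since $\mathcal{S}$ is NF-paraconsistent, there is some $\alpha \in \lang$ with $C_\vdash(\{\alpha,\beta\}) \neq \lang$ for every $\beta \in \lang$. Because $\neg$ is a unary operator on $\lang$, we have $\neg\alpha \in \lang$, so we may instantiate $\beta := \neg\alpha$. This yields $C_\vdash(\{\alpha,\neg\alpha\}) \neq \lang$, which is exactly the statement that $\neg$-ECQ fails at $\alpha$, i.e., that $\mathcal{S}$ is paraconsistent with respect to $\neg$.

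There is essentially no obstacle here: the failure of the generalized principle, which quantifies universally over the ``companion'' $\beta$, trivially specializes to the failure of the negation-based principle, which only demands the specific companion $\neg\alpha$. The only thing to note carefully is that the same $\alpha$ works as the witness in both cases, so no choice principle or additional structure on $\neg$ (for instance, no assumption that $\neg$ is involutive or that it behaves like a genuine negation) is needed. Equivalently, one may argue by contraposition: if $\neg$-ECQ held, then for every $\alpha$ the choice $\beta := \neg\alpha$ would satisfy $C_\vdash(\{\alpha,\beta\}) = \lang$, making gECQ hold and contradicting NF-paraconsistency.
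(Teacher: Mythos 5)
Your proof is correct and is the evident direct argument: the universally quantified witness $\alpha$ for the failure of gECQ specializes, via $\beta:=\neg\alpha$, to a witness for the failure of $\neg$-ECQ. The paper itself states this theorem without proof (citing it from earlier work), but your specialization argument is exactly what the proof amounts to, and you correctly note that no structural assumptions on $\neg$ are needed.
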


It is, however, possible for a logic to be paraconsistent but not NF-paraconsistent (see Example \ref{exa:PWK-NotNFpara} below). Before proceeding any further, we recall the well-known Tarskian conditions below, as these will repeatedly come up in our discussions. Suppose $\mathcal{S}=\langle\lang,\vdash\rangle$ is a logic.
\begin{enumerate}[label=(\roman*)]
    \item For all $\Gamma\cup\{\alpha\}\subseteq\lang$, if $\alpha\in\Gamma$, then $\Gamma\vdash\alpha$. In other words, for all $\Gamma\subseteq\lang$, $\Gamma\subseteq C_\vdash(\Gamma)$. (\emph{Reflexivity})
    \item For all $\Gamma,\Sigma\subseteq\lang$ and $\alpha\in\lang$, if $\Gamma\vdash\alpha$ and $\Gamma\subseteq\Sigma$, then $\Sigma\vdash\alpha$. Equivalently, in terms of $C_\vdash$, for all $\Gamma,\Sigma\subseteq\lang$, if $\Gamma\subseteq\Sigma$, then $C_\vdash(\Gamma)\subseteq C_\vdash(\Sigma)$. (\emph{Monotonicity})
    \item For all $\Gamma,\Sigma\subseteq\lang$ and $\alpha\in\lang$, if $\Gamma\vdash\beta$ for all $\beta\in\Sigma$ and $\Sigma\vdash\alpha$, then $\Gamma\vdash\alpha$. In other words, for all $\Gamma,\Sigma\subseteq\lang$, if $\Sigma\subseteq C_\vdash(\Gamma)$, then $C_\vdash(\Sigma)\subseteq C_\vdash(\Gamma)$. (\emph{Transitivity})
\end{enumerate}
The logic $\mathcal{S}$ (and also the relation $\vdash$) is said to be reflexive/monotonic/transitive when the above reflexivity/monotonicity/transitivity condition is satisfied, respectively. $\mathcal{S}$ is said to be \emph{Tarskian} if all three conditions are satisfied simultaneously and \emph{non-Tarskian} if one or more of these fail.

\begin{thm}\label{thm:NFpara->Fpara}
Suppose $\mathcal{S}=\langle\lang,\vdash\rangle$ is a monotonic logic. If $\mathcal{S}$ is NF-paraconsistent, then there does not exist any $\varphi\in\lang$ such that $C_\vdash(\varphi)=\lang$.
\end{thm}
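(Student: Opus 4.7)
The plan is to prove the contrapositive: assuming monotonicity of $\mathcal{S}$, I will show that if there exists $\varphi \in \lang$ with $C_\vdash(\varphi) = \lang$, then gECQ holds in $\mathcal{S}$, so $\mathcal{S}$ fails to be NF-paraconsistent.

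The argument is essentially a one-liner exploiting monotonicity. Suppose such a $\varphi$ exists. To verify gECQ, I must exhibit, for each $\alpha \in \lang$, some $\beta \in \lang$ with $C_\vdash(\{\alpha,\beta\}) = \lang$. My choice is $\beta := \varphi$, uniformly in $\alpha$. Then $\{\varphi\} \subseteq \{\alpha,\varphi\}$, so by monotonicity $C_\vdash(\varphi) \subseteq C_\vdash(\{\alpha,\varphi\})$. Since $C_\vdash(\varphi) = \lang$ by hypothesis, we conclude $C_\vdash(\{\alpha,\varphi\}) = \lang$, which gives gECQ.

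There is no real obstacle here: the only ingredient beyond unpacking the definitions of gECQ and NF-paraconsistency is the monotonicity hypothesis, which is used precisely to pass from the one-premise closure of $\varphi$ to the two-premise closure of $\{\alpha,\varphi\}$. Note that reflexivity and transitivity are not needed, and that the same $\beta = \varphi$ works for every $\alpha$, so in fact a stronger uniform version of gECQ is established under the hypothesis. Writing the conclusion contrapositively: NF-paraconsistency of a monotonic $\mathcal{S}$ rules out the existence of any single-formula trivializer $\varphi$.
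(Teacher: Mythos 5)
Your proof is correct and is essentially identical to the paper's: both argue the contrapositive by taking $\beta=\varphi$ uniformly for every $\alpha$ and invoking monotonicity to pass from $C_\vdash(\varphi)=\lang$ to $C_\vdash(\{\alpha,\varphi\})=\lang$, thereby establishing gECQ. Nothing is missing.
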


\begin{proof}
Suppose there exists $\varphi\in\lang$ such that $C_\vdash(\varphi)=\lang$. Now, let $\alpha\in\lang$. Then $C_\vdash(\{\alpha,\varphi\})=\lang$ by monotonicity. Thus, for every $\alpha\in\lang$, there exists $\beta\in\lang$, viz., $\varphi$, such that $C_\vdash(\{\alpha,\beta\})=\lang$. Hence $\langle\lang,\vdash\rangle$ is not NF-paraconsistent. Thus, if $\mathcal{S}$ is a monotonic NF-paraconsistent logic, then there cannot be a $\varphi\in\lang$ such that $C_\vdash(\varphi)=\lang$.
\end{proof}

\begin{rem}
It is well-known that the Tarskian reflexivity and transitivity conditions together imply monotonicity. Thus, the above result also holds for logics that are simultaneously reflexive and transitive.
\end{rem}

We now obtain the following result of \cite{BasuRoy2022} as a corollary to the above theorem.

\begin{cor}\cite[Theorem 2.3]{BasuRoy2022}\label{cor:NFpara->Fpara}
    Suppose $\mathcal{S}=\langle\lang,\vdash\rangle$ is a monotonic logic. If there is a falsity constant for the logic in $\lang$, then $\mathcal{S}$ is not NF-paraconsistent, i.e., if $\mathcal{S}$ is NF-paraconsistent, then it must be $\bot$-paraconsistent.
\end{cor}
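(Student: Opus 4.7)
The plan is to derive this corollary as an essentially immediate consequence of Theorem \ref{thm:NFpara->Fpara}: a falsity constant is precisely the sort of one-element trivializer whose existence that theorem rules out for monotonic NF-paraconsistent logics.

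First I would unpack the definition of falsity constant recalled in the introduction: $\bot \in \lang$ is a falsity constant for $\mathcal{S}$ exactly when $\{\bot\} \vdash \beta$ for every $\beta \in \lang$. Using the correspondence between $\vdash$ and $C_\vdash$ set out earlier, together with the convention that $C_\vdash(\bot)$ abbreviates $C_\vdash(\{\bot\})$, this condition is literally equivalent to $C_\vdash(\bot) = \lang$. Next I would apply Theorem \ref{thm:NFpara->Fpara} in its contrapositive form: if there exists some $\varphi \in \lang$ with $C_\vdash(\varphi) = \lang$, then the monotonic logic $\mathcal{S}$ is not NF-paraconsistent. Setting $\varphi := \bot$ supplies such a witness, yielding the first formulation. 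Contraposing once more gives the equivalent reformulation: an NF-paraconsistent monotonic logic cannot contain a falsity constant, i.e., it is $\bot$-paraconsistent.

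There is essentially no substantive obstacle; the corollary is a one-line translation from the preceding theorem. The only care required is notational—aligning the sequent-style defining condition of $\bot$ with the closure-operator hypothesis of Theorem \ref{thm:NFpara->Fpara}—and once the conventions are matched, the conclusion follows without further work. Because this uses only monotonicity (via Theorem \ref{thm:NFpara->Fpara}), no appeal to reflexivity or transitivity is needed, which matches the scope of the hypothesis.
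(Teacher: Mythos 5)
Your proposal is correct and follows exactly the paper's route: the paper's own proof is the one-liner ``if there is a falsity constant $\bot\in\lang$, then $C_\vdash(\bot)=\lang$,'' after which Theorem \ref{thm:NFpara->Fpara} applies. Your more detailed unpacking of the notational correspondence between $\{\bot\}\vdash\beta$ and $C_\vdash(\bot)=\lang$ is just a careful spelling-out of the same argument.
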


\begin{proof}
    Clear, since if there is a falsity constant for the logic $\bot\in\lang$, then $C_\vdash(\bot)=\lang$.
\end{proof}

We now turn to some examples of NF-paraconsistent logics.

\begin{exa}\cite[Example 2.7]{BasuRoy2022}
Suppose $\mathcal{S}=\langle\lang,\vdash\rangle$ is a logic, where $\vdash$ is defined as follows. For any $\Gamma\cup\{\alpha\}\subseteq\lang$,
$\Gamma\vdash\alpha$ iff $\alpha\in\Gamma$. Such a logic, which may be called a \emph{purely reflexive} logic, is clearly NF-paraconsistent as long as $\lang$ has at least three distinct elements.

Such a purely reflexive logic is also monotone and transitive, and hence Tarskian.
\end{exa}

\begin{exa}\label{exa:NFpara3}
Suppose $(\lang,\le)$ is a poset with two incomparable elements $\alpha,\beta$ (i.e., $\alpha\not\le\beta$ and $\beta\not\le\alpha$). We now define a logic $\mathcal{S}=\langle\lang,\vdash\rangle$ as follows. For any $\Gamma\cup\{\varphi\}\subseteq\lang$, $\Gamma\vdash\varphi$ iff for all $\gamma\in\Gamma$, $\gamma\le\varphi$. Clearly, for all $\gamma\in\lang$, $\{\alpha,\gamma\}\not\vdash\beta$ and $\{\beta,\gamma\}\not\vdash\alpha$. Thus, for all $\gamma\in\lang$, $C_\vdash(\{\alpha,\gamma\})\neq\lang$ and $C_\vdash(\{\beta,\gamma\})\neq\lang$. Hence $\mathcal{S}$ is NF-paraconsistent.

We can define a dual logic, $\mathcal{S}^\prime=\langle\lang,\vdash^\prime\rangle$, of $\mathcal{S}$ as follows. For any $\Gamma\cup\{\varphi\}\subseteq\lang$, $\Gamma\vdash^\prime\varphi$ iff for all $\gamma\in\Gamma$, $\varphi\le\gamma$. $\mathcal{S}^\prime$ is also clearly NF-paraconsistent for the same reasons as above.

The logics described above are all non-reflexive and non-monotonic, and hence non-Tarskian.
\end{exa}

The above example is mentioned in \cite[Remark 2.10]{BasuRoy2022}. See also \cite[Examples 2.8, 2.9]{BasuRoy2022} for special cases of the general ideas described in Example \ref{exa:NFpara3}.

Continuing along the same lines, let $\lang$ be any set, $(P\le)$ be any poset, and $\emptyset\subsetneq\mathcal{V}\subseteq P^\lang$ be a set of functions (these may be thought of as \emph{valuation} functions). We can now define a relation $\models_\mathcal{V}\,\subseteq\pow(\lang)\times\lang$ (similar to a \emph{semantic consequence} relation) as follows. For all $\Gamma\cup\{\alpha\}\subseteq\lang$,
\[
\Gamma\models_\mathcal{V}\alpha\quad \hbox{ iff }\quad\hbox{ for all }v\in\mathcal{V},\, v(\beta)\le v(\alpha)\hbox{ for every }\beta\in\Gamma.
\]
We then have the following theorem.

\begin{thm}\label{thm:posetWincom}\cite[Theorem 2.11]{BasuRoy2022}
Let $\lang$ be a non-empty set, $(P,\le)$ be a poset with at least two incomparable elements, and $\mathcal{V},\models_\mathcal{V}$ be as above. If there exists a $\widehat{v}\in\mathcal{V}$ such that $\widehat{v}(\gamma)$ and $\widehat{v}(\delta)$ are incomparable for some $\gamma,\delta\in\lang$, then $\langle\lang,\models_\mathcal{V}\rangle$ is NF-paraconsistent.
\end{thm}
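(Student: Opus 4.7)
The plan is to exhibit a specific $\alpha$ that witnesses NF-paraconsistency, and to use $\delta$ (from the hypothesis) as the element that consistently escapes the closure $C_{\models_\mathcal{V}}(\{\alpha,\beta\})$ regardless of the choice of $\beta$. Concretely, I would set $\alpha := \gamma$, where $\gamma,\delta$ are the elements of $\lang$ for which $\widehat{v}(\gamma)$ and $\widehat{v}(\delta)$ are incomparable in $(P,\le)$.

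The core step is then to verify, for an arbitrary $\beta \in \lang$, that $\delta \notin C_{\models_\mathcal{V}}(\{\gamma,\beta\})$. Unpacking the definition of $\models_\mathcal{V}$, the membership $\delta \in C_{\models_\mathcal{V}}(\{\gamma,\beta\})$ would require that for every $v \in \mathcal{V}$, both $v(\gamma) \le v(\delta)$ and $v(\beta) \le v(\delta)$. Instantiating at the distinguished valuation $\widehat{v}$, this in particular demands $\widehat{v}(\gamma) \le \widehat{v}(\delta)$, which contradicts the incomparability hypothesis. Hence $\delta \notin C_{\models_\mathcal{V}}(\{\gamma,\beta\})$, so $C_{\models_\mathcal{V}}(\{\gamma,\beta\}) \neq \lang$. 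Since $\beta$ was arbitrary, $\gamma$ witnesses the failure of gECQ, and so $\langle\lang,\models_\mathcal{V}\rangle$ is NF-paraconsistent.

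There is no real obstacle here: the only subtlety worth flagging is that the assumption ``$(P,\le)$ has at least two incomparable elements'' is used only through the stronger hypothesis that a valuation $\widehat{v}$ realizes such an incomparability on a pair of elements of $\lang$; the rest of the argument is just chasing definitions. Note also that the statement does not assume reflexivity or monotonicity of $\models_\mathcal{V}$, and indeed none is needed, which is consistent with the non-Tarskian character already noted in Example \ref{exa:NFpara3}.
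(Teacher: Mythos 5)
Your proof is correct and is essentially the intended argument: the paper itself omits the proof (citing the earlier work), but your definition-chase via the distinguished valuation $\widehat{v}$ is exactly the same reasoning used in Example \ref{exa:NFpara3}, of which this theorem is the semantic analogue. The choice $\alpha:=\gamma$ with $\delta$ as the permanently underivable element, using only $\widehat{v}(\gamma)\not\le\widehat{v}(\delta)$ from the incomparability hypothesis, is precisely what is needed.
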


Our next example is taken, with some change of notation, from \cite{Carnielli2007}, where this is used in a different context.

\begin{exa}\cite[Example 2.13]{BasuRoy2022}
Let $\mathcal{S}=\langle\lang,\vdash\rangle$ be the logic, where $\lang=\mathbb{R}$, the set of real numbers, and $\vdash\,\subseteq\pow(\lang)\times\lang$ is defined as follows. For any $\Gamma\cup\{\alpha\}\subseteq\lang$,
\[
\Gamma\vdash\alpha\quad\hbox{iff}\quad\left\{\begin{array}{l}
     \alpha\in\Gamma,\,\hbox{or}\\
     \alpha=\frac{1}{n}\,\hbox{ for some natural number }\,n\in\mathbb{N},\,n\ge1,\,\hbox{or}\\
     \hbox{there is a sequence }\,(\alpha_n)_{n\in\mathbb{N}}\,\hbox{ in }\,\Gamma\,\hbox{ that converges to }\,\alpha.
\end{array}\right.
\]
It is straightforward to see that for any $\alpha,\beta,\gamma\in\lang$ such that $\gamma\neq\alpha,\beta$ and $\gamma\neq\frac{1}{n}$ for any $n\in\mathbb{N}$, $\{\alpha,\beta\}\not\vdash\gamma$. Thus, $\mathcal{S}$ is NF-paraconsistent.

The logic $\mathcal{S}$ is reflexive and monotone but not transitive. To see the latter, one may note that for any $n\in\mathbb{N}$, $\vdash\frac{1}{n}$ and $\left\{\frac{1}{n}\mid\,n\in\mathbb{N}\right\}\vdash0$ but $\not\vdash0$. This phenomenon is also noted in \cite{Carnielli2007}.
\end{exa}

The notion of \emph{$q$-consequence} (\emph{quasi-consequence}) was introduced in \cite{Malinowski1990} to produce a reasoning scheme in which ``rejection'' and ``acceptance'' of a proposition are not necessarily reciprocal to each other. Before proceeding to show that logics equipped with certain $q$-consequence operators are NF-paraconsistent, we describe the concept of a $q$-consequence operator below.

\begin{dfn}\label{dfn:q-cons}
Let $\lang$ be a set and $W:\pow(\lang)\to\pow(\lang)$. $W$ is called a \emph{$q$-consequence operator} on $\lang$ if the following conditions hold.
\begin{enumerate}[label=(\roman*)]
    \item For all $\Gamma\cup\Sigma\subseteq\lang$, if $\Gamma\subseteq\Sigma$, then $W(\Gamma)\subseteq W(\Sigma)$.
    \item For all $\Gamma\subseteq\lang$, $W(\Gamma\cup W(\Gamma))=W(\Gamma)$.
\end{enumerate}
Given such a $q$-consequence operator $W$ on $\lang$, the pair $\langle\lang,\vdash^W\rangle$ is called the logic induced by $W$, where $\vdash^W\,\subseteq\,\pow(\lang)\times\lang$ is defined as follows. For any $\Gamma\cup\{\alpha\}\subseteq\lang$, $\Gamma\vdash^W\alpha$ iff $\alpha\in W(\Gamma)$.
\end{dfn}

\begin{thm}\cite[Theorem 2.15]{BasuRoy2022}
Suppose $\mathcal{S}=\langle\lang,\vdash^W\rangle$ is the logic induced by a $q$-consequence operator $W$ on the set $\lang$. If $W(\lang)\subsetneq\lang$, then $\mathcal{S}$ is NF-paraconsistent.
\end{thm}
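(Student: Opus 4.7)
The plan is to unpack the definitions and then make one appeal to the monotonicity clause in the definition of a $q$-consequence operator. First, I would observe that by the definition of $\vdash^W$, for any $\Gamma \subseteq \lang$ we have $C_{\vdash^W}(\Gamma) = W(\Gamma)$. So showing that $\mathcal{S}$ is NF-paraconsistent amounts to exhibiting some $\alpha \in \lang$ such that $W(\{\alpha,\beta\}) \neq \lang$ for every $\beta \in \lang$.

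Next, I would use the hypothesis $W(\lang) \subsetneq \lang$ to pick a witness $\gamma \in \lang \setminus W(\lang)$. Now for any $\alpha, \beta \in \lang$ whatsoever, the inclusion $\{\alpha, \beta\} \subseteq \lang$ holds trivially, so condition (i) in Definition \ref{dfn:q-cons} (monotonicity of $W$) gives $W(\{\alpha, \beta\}) \subseteq W(\lang)$. In particular $\gamma \notin W(\{\alpha, \beta\})$, hence $W(\{\alpha,\beta\}) \subsetneq \lang$ for every choice of $\alpha$ and $\beta$. Thus \emph{any} $\alpha \in \lang$ witnesses NF-paraconsistency; the condition is vacuously not hard to meet because we can quantify $\alpha$ universally.

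There is no real obstacle: the proof is a one-line consequence of monotonicity of $W$ applied to the inclusion $\{\alpha,\beta\} \subseteq \lang$, together with the definitional identification $C_{\vdash^W} = W$. The only point worth being careful about is that clause (ii) in the definition of a $q$-consequence operator plays no role here; all that matters is clause (i). One could even note, as a small remark, that no assumption on the size of $\lang$ is needed beyond what is implicit in $W(\lang) \subsetneq \lang$ (which forces $\lang$ to be non-empty).
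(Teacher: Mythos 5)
Your proof is correct: the identification $C_{\vdash^W}=W$ plus condition (i) of Definition \ref{dfn:q-cons} applied to $\{\alpha,\beta\}\subseteq\lang$ gives $W(\{\alpha,\beta\})\subseteq W(\lang)\subsetneq\lang$, and $W(\lang)\subsetneq\lang$ guarantees $\lang\neq\emptyset$ so a witness $\alpha$ exists. The paper imports this result from \cite{BasuRoy2022} without reproducing the proof, but your monotonicity argument is the evident intended one, and you are right that clause (ii) plays no role.
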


\subsection{NF-paraconsistency and logics of variable inclusion}

The \emph{logics of variable inclusion} have recently been rigorously studied, e.g., in \cite{Bonzio2020,BonzioPaoliPraBaldi2022,PaoliPraBaldiSzmuc2021}. The following definitions can be found there. For these logics, we assume that $\lang$ is the set of formulas defined inductively in the usual way over a denumerable set of variables $V$ using a finite set of connectives/operators called the \emph{signature/type}. In other words, $\lang$ is the formula algebra over $V$ of some type. The formula algebra has the universal mapping property for the class of all algebras of the same type as $\lang$ over $V$, i.e., any function $f:V\to A$, where $A$ is the universe of an algebra $\mathbf{A}$ of the same type as $\mathcal{L}$, can be uniquely extended to a homomorphism from $\lang$ to $\mathbf{A}$ (see \cite{FontJansanaPigozzi2003,Font2016} for more details). For the discussion in this subsection, we will assume that for any logic $\langle\lang,\vdash\rangle$, $\lang$ is as described above.

We first deal with the \emph{left variable inclusion companions}. These are described below.

\begin{dfn}\label{dfn:lvarinclusion}
Suppose $\mathcal{S}=\langle\lang,\vdash\rangle$ is a logic. For any $\alpha\in\lang$, let $\var(\alpha)$ denote the set of all the variables occurring in $\alpha$, and $\var(\Delta)=\displaystyle\bigcup_{\alpha\in\Delta}\var(\alpha)$.
\begin{enumerate}[label=(\roman*)]
    \item The \emph{left variable inclusion companion} of $\mathcal{S}$, denoted by $\mathcal{S}^{l}=\langle\lang,\vdash^{l}\rangle$, is defined as follows. For any $\Gamma\cup\{\alpha\}\subseteq\lang$, 
    \[
    \Gamma\vdash^{l}\alpha\;\hbox{iff there is a}\;\Delta\subseteq\Gamma\;\hbox{such that}\;\var(\Delta)\subseteq\var(\alpha)\;\hbox{and}\;\Delta\vdash\alpha.
    \]
    \item The \emph{pure left variable inclusion companion} of $\mathcal{S}$, denoted by $\mathcal{S}^{pl}=\langle\lang,\vdash^{pl}\rangle$, is defined as follows. For any $\Gamma\cup\{\alpha\}\subseteq\lang$,
    \[
    \Gamma\vdash^{pl}\alpha\;\hbox{iff there is a}\;\Delta\subseteq\Gamma\;\hbox{such that}\;\Delta\neq\emptyset,\var(\Delta)\subseteq\var(\alpha),\;\hbox{and}\;\Delta\vdash\alpha.
    \]
\end{enumerate}
\end{dfn}

\begin{thm}\label{thm:NFpara->Fpara(Sl,Spl)}
    Suppose $\mathcal{S}=\langle\lang,\vdash\rangle$ is a logic with $\bot\in\lang$, a falsity constant for $\mathcal{S}$ (i.e., $\mathcal{S}$ is not $\bot$-paraconsistent). Then $\mathcal{S}^l=\langle\lang,\vdash^l\rangle$ and $\mathcal{S}^{pl}=\langle\lang,\vdash^{pl}\rangle$ are not NF-paraconsistent.
\end{thm}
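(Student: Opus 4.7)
The plan is to show that gECQ holds in both $\mathcal{S}^l$ and $\mathcal{S}^{pl}$ by exhibiting, for every $\alpha\in\lang$, a concrete $\beta$ that triggers explosion, and the natural choice is $\beta=\bot$. The key observation is that $\bot$ is a nullary operator, so $\var(\bot)=\emptyset$, which makes the variable-inclusion side condition vacuous: $\emptyset\subseteq\var(\gamma)$ holds automatically for every $\gamma\in\lang$.

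First I would fix an arbitrary $\alpha\in\lang$ and take $\beta:=\bot$. To verify that $\{\alpha,\bot\}\vdash^l\gamma$ for every $\gamma\in\lang$, I would choose the witness $\Delta:=\{\bot\}\subseteq\{\alpha,\bot\}$. Then $\var(\Delta)=\var(\bot)=\emptyset\subseteq\var(\gamma)$, and since $\bot$ is a falsity constant for $\mathcal{S}$, we have $\{\bot\}\vdash\gamma$. Hence the defining clause of $\vdash^l$ in Definition \ref{dfn:lvarinclusion}(i) is satisfied, giving $\{\alpha,\bot\}\vdash^l\gamma$. Since $\gamma$ was arbitrary, $C_{\vdash^l}(\{\alpha,\bot\})=\lang$, and since $\alpha$ was arbitrary, gECQ holds in $\mathcal{S}^l$, so $\mathcal{S}^l$ is not NF-paraconsistent.

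For $\mathcal{S}^{pl}$ the argument is essentially the same, the only extra thing to check being the nonemptiness condition on $\Delta$ in Definition \ref{dfn:lvarinclusion}(ii). With the same choice $\Delta=\{\bot\}$, we have $\Delta\neq\emptyset$, and the remaining two conditions are verified exactly as above. Thus $\{\alpha,\bot\}\vdash^{pl}\gamma$ for every $\gamma$, so $C_{\vdash^{pl}}(\{\alpha,\bot\})=\lang$, which again yields gECQ and hence the failure of NF-paraconsistency.

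There is no real obstacle here; the proof rests entirely on the fact that a falsity constant has empty variable set, so the variable-inclusion constraint imposed by the two companion relations is trivial on the singleton $\{\bot\}$. The same $\beta=\bot$ works uniformly for every $\alpha$, so one can even strengthen the statement slightly: $\bot$ itself witnesses gECQ in both companion logics.
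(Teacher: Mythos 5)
Your proof is correct. The core observation---that $\var(\bot)=\emptyset$ makes the variable-inclusion side condition vacuous, so $\Delta=\{\bot\}$ is always an admissible witness---is exactly the one the paper uses, but your decomposition is different. You verify gECQ directly: for each $\alpha$ you take $\beta=\bot$ and check $\{\alpha,\bot\}\vdash^l\gamma$ (resp.\ $\vdash^{pl}$) for all $\gamma$ by pointing at $\Delta=\{\bot\}\subseteq\{\alpha,\bot\}$. The paper instead proves two intermediate facts: first that $\bot\vdash^l\gamma$ and $\bot\vdash^{pl}\gamma$ for all $\gamma$, i.e.\ that $\bot$ remains a falsity constant for the companion logics, and second that $\mathcal{S}^l$ and $\mathcal{S}^{pl}$ are always monotonic (regardless of whether $\mathcal{S}$ is), after which Corollary~\ref{cor:NFpara->Fpara} finishes the job. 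Your route is shorter and self-contained, effectively inlining the monotonicity step by choosing $\Delta$ inside the larger set from the outset. The paper's route buys a reusable structural fact---the unconditional monotonicity of the left variable inclusion companions---which is worth isolating, and it situates the theorem as an instance of the general principle that monotonic logics with a falsity constant cannot be NF-paraconsistent. Both are complete; neither has a gap.
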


\begin{proof}
    Since $\bot$ is a falsity constant for $\mathcal{S}$, $\bot\vdash\alpha$ for all $\alpha\in\lang$.

    We note that $\{\bot\}$ is a non-empty subset of itself, and $\var(\bot)=\emptyset\subseteq\var(\alpha)$ for all $\alpha\in\lang$. Thus, $\bot\vdash^l\alpha$ and $\bot\vdash^{pl}\alpha$ for all $\alpha\in\lang$. Hence, $\bot$ is also a falsity constant for $\mathcal{S}^l$ and $\mathcal{S}^{pl}$.

    Now, suppose $\Gamma\vdash^l\alpha$ ($\Gamma\vdash^{pl}\alpha$). Then there exists $\Delta\subseteq\Gamma$ ($\emptyset\neq\Delta\subseteq\Gamma$) such that $\var(\Delta)\subseteq\var(\alpha)$ and $\Delta\vdash\alpha$. Then for any $\Gamma^\prime\supseteq\Gamma$, $\Delta\subseteq\Gamma^\prime$, and hence $\Gamma^\prime\vdash^l\alpha$ (respectively, $\Gamma^\prime\vdash^{pl}\alpha$). Thus, $\mathcal{S}^l$ and $\mathcal{S}^{pl}$ are both monotonic logics (regardless of whether $\mathcal{S}$ is monotonic or not).

    Hence, by Corollary \ref{cor:NFpara->Fpara}, $\mathcal{S}^l$ and $\mathcal{S}^{pl}$ are not NF-paraconsistent.
\end{proof}

\begin{rem}
    The above theorem shows that if the (pure) left variable inclusion companion of a logic $\mathcal{S}=\langle\lang,\vdash\rangle$ is NF-paraconsistent, then there cannot be a falsity constant for $\mathcal{S}$ in $\lang$, and $\mathcal{S},\mathcal{S}^l, \mathcal{S}^{pl}$ are all $\bot$-paraconsistent.
\end{rem}

\begin{exa}\label{exa:PWK-NotNFpara}
Paraconsistent weak Kleene logic (PWK) (described in \cite{Bonzio2017}) and intuitionistic paraconsistent weak Kleene logic (IPWK) (described in \cite{BasuChakraborty2021}) are the left variable inclusion companions of classical and intuitionistic propositional logics, respectively. We know that the classical and intuitionistic propositional logics (described in a language with a falsity constant) are not $\bot$-paraconsistent. Thus, by Theorem \ref{thm:NFpara->Fpara(Sl,Spl)}, PWK and IPWK are not NF-paraconsistent, although they are paraconsistent. 
\end{exa}

We first prove the following lemma before proceeding to a partial converse to Theorem \ref{thm:NFpara->Fpara(Sl,Spl)}.

\begin{lem}\label{lem:Spl-trivial}
    Suppose $\mathcal{S}=\langle\lang,\vdash\rangle$ is a logic. If $\alpha,\beta\in\lang$ such that $C_{\vdash^{pl}}(\{\alpha,\beta\})=\lang$, then either $\var(\alpha)=\emptyset$ or $\var(\beta)=\emptyset$ (i.e., either $\alpha$ or $\beta$ is a constant). Moreover, if $\not\vdash q$ for all $q\in V$, the same conclusion holds for $\mathcal{S}^l$, i.e., if $\alpha,\beta\in\lang$ such that $C_{\vdash^l}(\{\alpha,\beta\})=\lang$, then either $\var(\alpha)=\emptyset$ or $\var(\beta)=\emptyset$.
\end{lem}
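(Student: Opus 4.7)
The plan is to argue by contradiction. Assume toward contradiction that $\var(\alpha)\neq\emptyset$ and $\var(\beta)\neq\emptyset$. The key move is to pick a fresh variable $q\in V$ that occurs in neither $\alpha$ nor $\beta$; this is possible because $V$ is denumerable while $\var(\alpha)\cup\var(\beta)$ is a finite subset of $V$. Since by hypothesis $C_{\vdash^{pl}}(\{\alpha,\beta\})=\lang$, in particular $\{\alpha,\beta\}\vdash^{pl} q$.

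Unfolding the definition of $\vdash^{pl}$, there must exist a nonempty $\Delta\subseteq\{\alpha,\beta\}$ with $\var(\Delta)\subseteq\var(q)=\{q\}$ and $\Delta\vdash q$. Since $\Delta$ is nonempty, it contains $\alpha$ or $\beta$. If $\alpha\in\Delta$, then $\var(\alpha)\subseteq\var(\Delta)\subseteq\{q\}$, but $q\notin\var(\alpha)$ by choice of $q$, forcing $\var(\alpha)=\emptyset$, which contradicts our assumption. The case $\beta\in\Delta$ is symmetric. This closes the first half.

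For the $\vdash^l$ statement, the argument is essentially identical, but now the definition allows $\Delta$ to be empty. Here the extra hypothesis $\not\vdash q$ for all $q\in V$ is exactly what is needed: if $\Delta=\emptyset$ worked as the witness for $\{\alpha,\beta\}\vdash^l q$, we would have $\emptyset\vdash q$, i.e., $\vdash q$, contradicting this hypothesis. So one can again reduce to the case of a nonempty $\Delta\subseteq\{\alpha,\beta\}$ with $\var(\Delta)\subseteq\{q\}$ and finish as before.

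I do not foresee a genuine obstacle; the only subtlety is the choice of the fresh variable, which needs the set $V$ to be infinite (denumerable, as assumed in the subsection's standing convention on formula algebras) and $\var(\alpha)\cup\var(\beta)$ to be finite (as is standard for formulas built from a finite-arity signature). The role of the extra hypothesis in the $\vdash^l$ case is precisely to rule out the empty-$\Delta$ loophole that the pure variant automatically excludes.
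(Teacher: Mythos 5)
Your proof is correct and follows essentially the same route as the paper's: choose a fresh variable $q\notin\var(\{\alpha,\beta\})$, apply the definition of $\vdash^{pl}$ (resp.\ $\vdash^{l}$) to $\{\alpha,\beta\}\vdash^{pl}q$ to force $\var(\Delta)=\emptyset$ for the witnessing $\Delta$, and use the hypothesis $\not\vdash q$ only to exclude the empty-$\Delta$ case for $\vdash^{l}$. The only difference is cosmetic: you phrase it as a contradiction from assuming both $\var(\alpha)\neq\emptyset$ and $\var(\beta)\neq\emptyset$, whereas the paper assumes $\var(\alpha)\neq\emptyset$ and directly concludes $\Delta=\{\beta\}$ and $\var(\beta)=\emptyset$.
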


\begin{proof}
    Suppose $\alpha,\beta\in\lang$ such that $C_{\vdash^{pl}}(\{\alpha,\beta\})=\lang$. Suppose further that $\var(\alpha)\neq\emptyset$. 
    
    Let $q\in V\setminus\var(\{\alpha,\beta\})$. Such a variable $q$ exists as $V$ is denumerable, while $\var(\{\alpha,\beta\})$ is finite. Now, $\{\alpha,\beta\}\vdash^{pl}q$. So, there exists $\Delta\subseteq\{\alpha,\beta\}$ such that $\Delta\neq\emptyset$, $\var(\Delta)\subseteq\var(q)=\{q\}$, and $\Delta\vdash q$. Clearly, $\var(\Delta)\subseteq\var(\{\alpha,\beta\})$. However, $q\notin\var(\{\alpha,\beta\})$. So, $\var(\Delta)=\emptyset$. Since $\var(\alpha)\neq\emptyset$, $\alpha\notin\Delta$. Then as $\Delta\neq\emptyset$, $\Delta=\{\beta\}$. So, $\var(\beta)=\var(\Delta)=\emptyset$.

    The argument for $\mathcal{S}^l$ proceeds in an identical manner with $\vdash^{pl}$ replaced by $\vdash^l$. The only other difference is that, in this case, the set $\Delta\subseteq\{\alpha,\beta\}$ can be empty. However, if $\Delta=\emptyset$, then $\vdash q$, which contradicts the additional assumption in this case that $\not\vdash q$ for all $q\in V$. Thus, it must be that $\var(\beta)=\emptyset$ if $\var(\alpha)\neq\emptyset$.
\end{proof}

\begin{thm}\label{thm:Sl-NFpara}
Suppose $\mathcal{S}=\langle\lang,\vdash\rangle$ is a logic. If $\var(\varphi)\neq\emptyset$ for all $\varphi\in\lang$, then $\mathcal{S}^{pl}$ is NF-paraconsistent.
Moreover, if $\not\vdash q$ for all $q\in V$, then $\mathcal{S}^{l}$ is also NF-paraconsistent.
\end{thm}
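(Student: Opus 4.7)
The plan is to deduce the theorem almost immediately from Lemma \ref{lem:Spl-trivial} via contraposition. The heavy lifting (involving the choice of a fresh variable $q\in V\setminus\var(\{\alpha,\beta\})$) has already been done in that lemma, so the theorem becomes essentially a one-line corollary.

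For the first claim, I would proceed as follows. Fix an arbitrary $\alpha\in\lang$; by hypothesis $\var(\alpha)\neq\emptyset$. Let $\beta\in\lang$ be arbitrary; again $\var(\beta)\neq\emptyset$. By the contrapositive of the first part of Lemma \ref{lem:Spl-trivial}, since neither $\var(\alpha)$ nor $\var(\beta)$ is empty, $C_{\vdash^{pl}}(\{\alpha,\beta\})\neq\lang$. As $\beta$ was arbitrary, this shows that for every $\beta\in\lang$, $C_{\vdash^{pl}}(\{\alpha,\beta\})\neq\lang$, so $\mathcal{S}^{pl}$ is NF-paraconsistent. Notice that in fact the stronger statement holds: every $\alpha\in\lang$ witnesses NF-paraconsistency, not just some $\alpha$.

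For the ``moreover'' part about $\mathcal{S}^l$, the argument is identical, now invoking the second part of Lemma \ref{lem:Spl-trivial}, whose applicability is guaranteed by the extra assumption that $\not\vdash q$ for every $q\in V$. Fixing any $\alpha\in\lang$ and any $\beta\in\lang$, the hypothesis $\var(\varphi)\neq\emptyset$ for all $\varphi\in\lang$ rules out the disjunctive conclusion of the lemma, so $C_{\vdash^{l}}(\{\alpha,\beta\})\neq\lang$, and $\mathcal{S}^l$ is NF-paraconsistent.

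There is no genuine obstacle here: the only subtlety is remembering that the second half of the statement requires the additional premise $\not\vdash q$ for all $q\in V$, which is precisely what Lemma \ref{lem:Spl-trivial} needs in order to exclude the case $\Delta=\emptyset$ when dealing with $\vdash^l$ (as opposed to $\vdash^{pl}$, where $\Delta\neq\emptyset$ is built into the definition). So the structure of the proof is just: cite the lemma, take contrapositive, and read off the conclusion for both $\mathcal{S}^{pl}$ and $\mathcal{S}^l$ in parallel.
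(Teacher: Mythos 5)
Your proposal is correct and follows essentially the same route as the paper: both deduce the theorem from Lemma \ref{lem:Spl-trivial}, the paper by contradiction with the witness $\alpha=p\in V$, you by direct contraposition with an arbitrary $\alpha$ (which, as you note, shows every element is a witness). The difference is purely presentational.
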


\begin{proof}
Suppose $\var(\varphi)\neq\emptyset$ for all $\varphi\in\lang$, but $\mathcal{S}^{pl}$ is not NF-paraconsistent. Let $p\in V$. So, by gECQ, there exists $\beta\in\lang$ such that $C_{\vdash^{pl}}(\{p,\beta\})=\lang$. Then by Lemma \ref{lem:Spl-trivial}, $\var(\beta)=\emptyset$ since $\var(p)=\{p\}$. This contradicts our assumption that $\var(\varphi)\neq\emptyset$ for all $\varphi\in\lang$. Thus, $\mathcal{S}^{pl}$ must be NF-paraconsistent.

In the case of $\mathcal{S}^l$, we use the additional hypothesis that $\not\vdash q$ for all $q\in V$ and arrive at a similar contradiction using Lemma \ref{lem:Spl-trivial}. Thus, under this additional hypothesis, $\mathcal{S}^l$ is also NF-paraconsistent.
\end{proof}

\begin{rem}
    We note that Theorem \ref{thm:Sl-NFpara} is not the exact converse of Theorem \ref{thm:NFpara->Fpara(Sl,Spl)}. While $\var(\varphi)\neq\emptyset$ for all $\varphi\in\lang$ (i.e., there are no logical constants) implies that there is no falsity constant, the former is clearly a stronger assumption. We can, however, achieve the full converse if the logic $\mathcal{S}$ is \emph{substitution invariant} (described below). The full converse is also obtained, in the case of $\mathcal{S}^{pl}$, if $\mathcal{S}$ is a transitive logic, and, in the case of $\mathcal{S}^l$, if $\mathcal{S}$ is monotonic and transitive.
\end{rem}    

Suppose $\lang$ is a formula algebra over a set of variables $V$. A \emph{substitution} is any function $\sigma:V\to\lang$ that extends to a unique endomorphism (also denoted by $\sigma$) from $\lang$ to itself via the universal mapping property. The logic $\mathcal{S}=\langle\lang,\vdash\rangle$ (and the relation $\vdash$) is said to be \emph{substitution invariant} or \emph{structural} if the following property holds. For any substitution $\sigma$ and for any $\Gamma\cup\{\varphi\}\subseteq\lang$, if $\Gamma\vdash\varphi$, then $\sigma(\Gamma)\vdash\sigma(\varphi)$. The property is referred to as \emph{structurality}. For more details and context, see \cite{FontJansanaPigozzi2003,Font2016}.

\begin{thm}\label{thm:Sl-NFpara2}
Suppose $\mathcal{S}=\langle\lang,\vdash\rangle$ is a structural logic. If there is no falsity constant for $\mathcal{S}$ in $\lang$, then $\mathcal{S}^{pl}$ is NF-paraconsistent.
Moreover, if there exists $\varphi\in\lang$ such that $\not\vdash\varphi$, then $\mathcal{S}^{l}$ is also NF-paraconsistent.
\end{thm}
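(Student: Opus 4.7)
The plan is to prove both statements by contradiction, using Lemma \ref{lem:Spl-trivial} as the main engine and structurality to promote conclusions about individual variables into conclusions about arbitrary formulas. In both cases the strategy is to extract, from a purported failure of NF-paraconsistency, a closed formula $\beta$ with $C_\vdash(\beta)=\lang$, which then plays the role of a falsity constant in $\mathcal{S}$ and contradicts the standing hypothesis.

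For $\mathcal{S}^{pl}$, assume it is not NF-paraconsistent. Fix any variable $p\in V$; by hypothesis there is $\beta\in\lang$ with $C_{\vdash^{pl}}(\{p,\beta\})=\lang$. Since $\var(p)=\{p\}\neq\emptyset$, Lemma \ref{lem:Spl-trivial} forces $\var(\beta)=\emptyset$. Pick any $q\in V$ with $q\neq p$ and unpack $\{p,\beta\}\vdash^{pl} q$: the witnessing $\Delta\subseteq\{p,\beta\}$ is non-empty with $\var(\Delta)\subseteq\{q\}$, which rules out every subset containing $p$ and leaves $\Delta=\{\beta\}$, so $\beta\vdash q$. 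To upgrade this to $\beta\vdash\gamma$ for an arbitrary $\gamma\in\lang$, consider the substitution $\sigma$ sending $q\mapsto\gamma$ and fixing every other variable; by structurality $\sigma(\beta)\vdash\sigma(q)$, and since $\var(\beta)=\emptyset$ we have $\sigma(\beta)=\beta$. Hence $C_\vdash(\beta)=\lang$, contradicting the absence of a falsity constant.

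For $\mathcal{S}^{l}$, the argument is structurally identical, but to invoke the $\mathcal{S}^{l}$-clause of Lemma \ref{lem:Spl-trivial} we first need $\not\vdash q$ for every $q\in V$. This is exactly what the extra hypothesis buys via structurality: if $\vdash q_0$ for some $q_0\in V$, then substituting $q_0\mapsto\varphi$ would give $\vdash\varphi$, contradicting the assumed existence of $\varphi$ with $\not\vdash\varphi$. With $\not\vdash q$ secured for every variable, we rerun the previous argument, the only novelty being that $\vdash^l$ additionally permits $\Delta=\emptyset$; but such a $\Delta$ would yield $\vdash q$, which we have just ruled out. The remaining cases are handled exactly as in the $\mathcal{S}^{pl}$ proof, producing once again a closed $\beta$ with $C_\vdash(\beta)=\lang$ and contradicting the absence of a falsity constant.

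The main obstacle is conceptual rather than technical: one must choose the witness $\alpha=p\in V$ \emph{before} obtaining the trivialising partner $\beta$, so that the inclusion $\var(\Delta)\subseteq\{q\}$ for a fresh variable $q\neq p$ automatically forces $p\notin\Delta$ and hence $\Delta=\{\beta\}$. Once this observation is in place, structurality converts the single variable-level fact $\beta\vdash q$ into the full statement $\beta\vdash\gamma$ for all $\gamma\in\lang$, using only that $\beta$ is closed; the two parts of the theorem then differ exclusively in how one certifies the hypothesis of Lemma \ref{lem:Spl-trivial} and in the minor extra case $\Delta=\emptyset$ that arises for $\vdash^{l}$.
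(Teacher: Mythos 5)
Your proposal is correct and follows essentially the same route as the paper: reduce a failure of NF-paraconsistency to a closed formula $\beta$ with $\beta\vdash q$ for a fresh variable $q$ (via the unpacking in Lemma \ref{lem:Spl-trivial}), then use structurality and $\sigma(\beta)=\beta$ to promote this to $C_\vdash(\beta)=\lang$, contradicting the absence of a falsity constant. The only cosmetic differences are that you invoke the lemma first to get $\var(\beta)=\emptyset$ before choosing $q$, and for $\mathcal{S}^l$ you first derive $\not\vdash q$ for all $q\in V$ from the extra hypothesis rather than disposing of the $\Delta=\emptyset$ case inline, which the paper does directly.
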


\begin{proof}
    Suppose there is no falsity constant for $\mathcal{S}$ in $\lang$, but $\mathcal{S}^{pl}$ is not NF-paraconsistent. Let $p\in V$. So, by gECQ, there exists $\beta\in\lang$ such that $C_{\vdash^{pl}}(\{p,\beta\})=\lang$. Then, as in the proof of Lemma \ref{lem:Spl-trivial}, let $q\in V\setminus\var(\{p,\beta\})$. So, $\{p,\beta\}\vdash^{pl}q$, which implies that there exists $\Delta\subseteq\{p,\beta\}$ such that $\Delta\neq\emptyset$, $\var(\Delta)\subseteq\{q\}$, and $\Delta\vdash q$. So, by the same arguments as in the proof of Lemma \ref{lem:Spl-trivial}, $\Delta=\{\beta\}$ and $\var(\beta)=\var(\Delta)=\emptyset$. Thus, $\beta$ is a constant, and $\beta\vdash q$. So, by structurality, $\beta\vdash\alpha$ for all $\alpha\in\lang$, which implies that $\beta$ is a falsity constant for $\mathcal{S}$. This is a contradiction. Hence $\mathcal{S}^{pl}$ is NF-paraconsistent.

    The argument for $\mathcal{S}^l$ proceeds similarly with $\vdash^{pl}$ replaced by $\vdash^l$. The extra case we need to deal with now is where $\Delta=\emptyset$. However, that implies $\vdash q$, and hence by structurality, $\vdash\alpha$ for all $\alpha\in\lang$. This contradicts the additional assumption that there exists $\varphi\in\lang$ such that $\not\vdash\varphi$. Thus, $\mathcal{S}^l$ is also NF-paraconsistent.
\end{proof}

\begin{thm}\label{thm:Spl-NFpara}
Suppose $\mathcal{S}=\langle\lang,\vdash\rangle$ is a transitive logic. If there is no falsity constant for $\mathcal{S}$ in $\lang$, then $\mathcal{S}^{pl}$ is NF-paraconsistent.
\end{thm}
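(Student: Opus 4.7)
The plan is to mimic the structure of the proof of Theorem \ref{thm:Sl-NFpara2} but replace the use of structurality with an argument that invokes transitivity. I will argue by contradiction: assume $\mathcal{S}^{pl}$ is not NF-paraconsistent, so gECQ holds. Picking a variable $p \in V$, gECQ yields $\beta \in \lang$ with $C_{\vdash^{pl}}(\{p,\beta\})=\lang$. Since $\var(p)=\{p\}\neq\emptyset$, Lemma \ref{lem:Spl-trivial} forces $\var(\beta)=\emptyset$, so $\beta$ is a constant. The goal is to derive that $\beta$ is a falsity constant for $\mathcal{S}$, contradicting the hypothesis.

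The first key step is to show that $\beta \vdash \gamma$ (in the original $\mathcal{S}$) for every $\gamma \in \lang$ with $p \notin \var(\gamma)$. This is a direct replay of the argument in Lemma \ref{lem:Spl-trivial}: from $\{p,\beta\}\vdash^{pl}\gamma$ one gets $\emptyset\neq\Delta\subseteq\{p,\beta\}$ with $\var(\Delta)\subseteq\var(\gamma)$ and $\Delta\vdash\gamma$; since $p \notin \var(\gamma)$ forces $p \notin \Delta$, we must have $\Delta = \{\beta\}$, giving $\beta\vdash\gamma$.

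The main obstacle is then to pass from this partial conclusion to $\beta\vdash\alpha$ for an \emph{arbitrary} $\alpha \in \lang$, including those in which $p$ occurs. To handle such $\alpha$, I will pick a fresh variable $p' \in V$ with $p' \neq p$ and $p' \notin \var(\alpha)$ (available since $V$ is denumerable while $\var(\alpha)$ is finite). Applying gECQ again to $p'$ produces a constant $\beta'$ (by Lemma \ref{lem:Spl-trivial}) with $C_{\vdash^{pl}}(\{p',\beta'\})=\lang$, and the first-step argument applied with $p$ replaced by $p'$ gives $\beta' \vdash \gamma'$ for every $\gamma'$ with $p' \notin \var(\gamma')$. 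In particular $\beta'\vdash\alpha$.

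Finally, I invoke transitivity. Since $\beta'$ is a constant, $p\notin\var(\beta')$, so the first step gives $\beta\vdash\beta'$, i.e., $\{\beta'\}\subseteq C_\vdash(\{\beta\})$. Transitivity then yields $C_\vdash(\{\beta'\})\subseteq C_\vdash(\{\beta\})$, and combining with $\beta'\vdash\alpha$ we obtain $\beta\vdash\alpha$. As $\alpha\in\lang$ was arbitrary, $\beta$ is a falsity constant for $\mathcal{S}$, contradicting the hypothesis; hence $\mathcal{S}^{pl}$ must be NF-paraconsistent. The technical subtlety to watch is the existence of the fresh variable $p'$ and ensuring that the chain $\beta\vdash\beta'\vdash\alpha$ correctly fits the transitivity formulation $\Sigma\subseteq C_\vdash(\Gamma)\Rightarrow C_\vdash(\Sigma)\subseteq C_\vdash(\Gamma)$.
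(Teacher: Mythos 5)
Your proposal is correct and follows essentially the same route as the paper's proof: both apply gECQ and Lemma \ref{lem:Spl-trivial} twice (once for $p$, once for a fresh variable), establish that $\beta\vdash\gamma$ whenever $p\notin\var(\gamma)$, and then close the gap for formulas containing $p$ via a second constant and transitivity. The only cosmetic difference is that the paper phrases the final stage as a contradiction from a single witness $\varphi$ with $\beta\not\vdash\varphi$, whereas you argue directly for arbitrary $\alpha$; the underlying chain $\beta\vdash\psi\vdash\varphi$ (your $\beta\vdash\beta'\vdash\alpha$) is identical.
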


\begin{proof}
    Suppose $\mathcal{S}^{pl}$ is not NF-paraconsistent. Let $p\in V$. So, by gECQ, there exists $\beta\in\lang$ such that $C_{\vdash^{pl}}(\{p,\beta\})=\lang$. Then, by Lemma \ref{lem:Spl-trivial}, $\var(\beta)=\emptyset$ since $\var(p)\neq\emptyset$. Thus, $\beta$ is a constant.
    
    Let $\gamma\in\lang$ such that $p\notin\var(\gamma)$. Now, $\{p,\beta\}\vdash^{pl}\gamma$. So, there exists $\Delta\subseteq\{p,\beta\}$ such that $\Delta\neq\emptyset$, $\var(\Delta)\subseteq\var(\gamma)$, and $\Delta\vdash\gamma$. Since $p\notin\var(\gamma)$, $p\notin\Delta$. Thus, $\Delta=\{\beta\}$, and $\beta\vdash\gamma$. Hence we can conclude that $\beta$ is a constant such that $\beta\vdash\gamma$ for all $\gamma\in\lang$ with $p\notin\var(\gamma)$.

    Suppose $\beta$ is not a falsity constant. Then there exists $\varphi\in\lang$ such that $\beta\not\vdash\varphi$. So, by the above arguments, $p\in\var(\varphi)$. Now, let $p^\prime\in V\setminus\var(\varphi)$. Again by gECQ, there exists $\psi\in\lang$ such that $C_{\vdash^{pl}}(\{p^\prime,\psi\})=\lang$. Then, again by Lemma \ref{lem:Spl-trivial}, $\var(\psi)=\emptyset$ since $\var(p^\prime)\neq\emptyset$. This, in particular, implies that $p\notin\var(\psi)$, and thus, $\beta\vdash\psi$. 

    Now, since $C_{\vdash^{pl}}(\{p^\prime,\psi\})=\lang$, $\{p^\prime,\psi\}\vdash^{pl}\varphi$. So, there exists $\Delta\subseteq\{p^\prime,\psi\}$ such that $\Delta\neq\emptyset$, $\var(\Delta)\subseteq\var(\varphi)$, and $\Delta\vdash\varphi$. Since $p^\prime\notin\var(\varphi)$, $p^\prime\notin\Delta$. Thus, $\Delta=\{\psi\}$ and so, $\psi\vdash\varphi$. Since $\beta\vdash\psi$ and $\psi\vdash\varphi$, by transitivity, we have $\beta\vdash\varphi$. This is a contradiction to our earlier assumption. Hence $\mathcal{S}^{pl}$ must be NF-paraconsistent.
\end{proof}

\begin{thm}\label{thm:Sl-NFpara3}
Suppose $\mathcal{S}=\langle\lang,\vdash\rangle$ is a monotonic and transitive logic such that $\not\vdash q$ for all $q\in V$. If there is no falsity constant for $\mathcal{S}$ in $\lang$, then $\mathcal{S}^{l}$ is NF-paraconsistent.
\end{thm}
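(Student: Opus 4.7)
The plan is to argue by contradiction, closely paralleling the proof of Theorem \ref{thm:Spl-NFpara} but exercising extra care at the points where $\mathcal{S}^l$ diverges from $\mathcal{S}^{pl}$ by allowing $\Delta=\emptyset$ in its defining condition. Assume $\mathcal{S}^l$ is not NF-paraconsistent. Fix an arbitrary variable $p\in V$. By gECQ there is a $\beta\in\lang$ with $C_{\vdash^l}(\{p,\beta\})=\lang$. Since the extra hypothesis $\not\vdash q$ for all $q\in V$ is in force, the second clause of Lemma \ref{lem:Spl-trivial} applies and yields $\var(\beta)=\emptyset$, so $\beta$ is a constant.

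Next I would extract the key intermediate fact that $\beta\vdash\gamma$ for every $\gamma\in\lang$ with $p\notin\var(\gamma)$. For such a $\gamma$, triviality gives $\{p,\beta\}\vdash^l\gamma$, so there exists $\Delta\subseteq\{p,\beta\}$ with $\var(\Delta)\subseteq\var(\gamma)$ and $\Delta\vdash\gamma$. Because $p\notin\var(\gamma)$, $\Delta$ cannot contain $p$, leaving only $\Delta=\emptyset$ or $\Delta=\{\beta\}$. In the second case we obtain $\beta\vdash\gamma$ directly; in the first case $\vdash\gamma$, and monotonicity of $\mathcal{S}$ upgrades this to $\beta\vdash\gamma$.

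Now, since $\mathcal{S}$ has no falsity constant, $\beta$ itself cannot be one, so there exists $\varphi\in\lang$ with $\beta\not\vdash\varphi$; by the previous paragraph this forces $p\in\var(\varphi)$. Using that $V$ is denumerable, I would pick a fresh $p'\in V\setminus\var(\varphi)$ and apply gECQ again to obtain $\psi\in\lang$ with $C_{\vdash^l}(\{p',\psi\})=\lang$; once more Lemma \ref{lem:Spl-trivial} gives $\var(\psi)=\emptyset$, so $\psi$ is a constant. Since $\psi$ is a constant, $p\notin\var(\psi)$, and the previous paragraph yields $\beta\vdash\psi$; repeating that paragraph's argument with $p'$ and $\psi$ in place of $p$ and $\beta$ shows $\psi\vdash\gamma'$ for every $\gamma'$ with $p'\notin\var(\gamma')$, hence in particular $\psi\vdash\varphi$. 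Transitivity of $\mathcal{S}$ then chains these into $\beta\vdash\varphi$, contradicting the choice of $\varphi$.

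The main obstacle, compared with the cleaner $\mathcal{S}^{pl}$ case treated in Theorem \ref{thm:Spl-NFpara}, is that the definition of $\vdash^l$ admits $\Delta=\emptyset$, so when analysing $\{p,\beta\}\vdash^l\gamma$ we may land in the situation $\vdash\gamma$ rather than $\Delta=\{\beta\}\vdash\gamma$; this is precisely where monotonicity of $\mathcal{S}$ is used. A secondary care-point is ensuring that Lemma \ref{lem:Spl-trivial} is actually available for $\mathcal{S}^l$, which is why the hypothesis $\not\vdash q$ for all $q\in V$ is needed, matching the extra assumption in that lemma's second clause.
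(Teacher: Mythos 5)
Your proof is correct and follows essentially the same route as the paper's: contradiction via gECQ, Lemma \ref{lem:Spl-trivial} to make $\beta$ and $\psi$ constants, the intermediate fact that $\beta\vdash\gamma$ whenever $p\notin\var(\gamma)$, a fresh variable $p'$, and transitivity to chain $\beta\vdash\psi\vdash\varphi$. The only cosmetic difference is that you dispatch the $\Delta=\emptyset$ case uniformly by monotonicity in both applications of the intermediate argument, whereas the paper rules it out in the final step by first observing $\not\vdash\varphi$; both are fine.
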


\begin{proof}
    Suppose $\mathcal{S}^l$ is not NF-paraconsistent. Let $p\in V$. So, by gECQ, there exists $\beta\in\lang$ such that $C_{\vdash^l}(\{p,\beta\})=\lang$. Then, by Lemma \ref{lem:Spl-trivial}, $\var(\beta)=\emptyset$ since $\var(p)\neq\emptyset$. Thus, $\beta$ is a constant.

    Let $\gamma\in\lang$. If $\vdash\gamma$, then $\beta\vdash\gamma$ by monotonicity. Now, suppose $\not\vdash\gamma$ and $p\notin\var(\gamma)$. Since $V$ is denumerable and $\not\vdash q$ for all $q\in V$, such a $\gamma$ exists. Then $\{p,\beta\}\vdash^l\gamma$. This implies that there exists $\Delta\subseteq\{p,\beta\}$ such that $\var(\Delta)\subseteq\var(\gamma)$, and $\Delta\vdash\gamma$. Since $p\notin\var(\gamma)$, $p\notin\Delta$. Then as $\not\vdash\gamma$, $\Delta=\{\beta\}$, and $\beta\vdash\gamma$. Thus, for any $\gamma\in\lang$, if $\vdash\gamma$ or $p\notin\var(\gamma)$, then $\beta\vdash\gamma$. In other words, $\beta$ is a constant such that for all $\gamma\in\lang$, if $\beta\not\vdash\gamma$, then $\not\vdash\gamma$ and $p\in\var(\gamma)$. In particular, this implies that $\beta\vdash\sigma$ for all $\sigma\in\lang$ such that $\var(\sigma)=\emptyset$.
    
    Suppose $\beta$ is not a falsity constant. Then there exists $\varphi\in\lang$ such that $\beta\not\vdash\varphi$. So, by the above arguments, $\not\vdash\varphi$ and $p\in\var(\varphi)$. Now, let $p^\prime\in V\setminus\var(\varphi)$ (such a variable exists since $V$ is denumerable). Again by gECQ, there exists $\psi\in\lang$ such that $C_{\vdash^l}(\{p^\prime,\psi\})=\lang$. Then, again by Lemma \ref{lem:Spl-trivial}, $\var(\psi)=\emptyset$ since $\var(p^\prime)\neq\emptyset$. Thus, $\beta\vdash\psi$.

    Now, since $C_{\vdash^l}(\{p^\prime,\psi\})=\lang$, $\{p^\prime,\psi\}\vdash^l\varphi$. So, there exists $\Delta\subseteq\{p^\prime,\psi\}$ such that $\var(\Delta)\subseteq\var(\varphi)$, and $\Delta\vdash\varphi$. Since $p^\prime\notin\var(\varphi)$, $p^\prime\notin\Delta$. Moreover, $\not\vdash\varphi$, which implies that $\Delta\neq\emptyset$. Thus, $\Delta=\{\psi\}$ and so, $\psi\vdash\varphi$. Since $\beta\vdash\psi$ and $\psi\vdash\varphi$, by transitivity, we have $\beta\vdash\varphi$. This is a contradiction to our earlier assumption. Hence $\mathcal{S}^l$ must be NF-paraconsistent.
\end{proof}

\begin{rem}
    We note that we do not require the full strength of the monotonicity condition in the above theorem. In fact, we only need the following weaker condition. For any $\gamma\in\lang$, if $\vdash\gamma$, then $\beta\vdash\gamma$ for all $\beta\in\lang$.

    One may also note that Theorems \ref{thm:Spl-NFpara} and \ref{thm:Sl-NFpara3} do not require the full strength of the transitivity condition. The following weaker ``pointwise transitivity'' is enough. For any $\alpha,\beta,\gamma\in\lang$, if $\alpha\vdash\beta$ and $\beta\vdash\gamma$, then $\alpha\vdash\gamma$.
\end{rem}

Next, we explore the \emph{right variable inclusion companions}. These are described below.

\begin{dfn}\label{dfn:rvarinclusion}
Suppose $\mathcal{S}=\langle\lang,\vdash\rangle$ is a logic.
    \begin{enumerate}[label=(\roman*)]
    \item The \emph{right variable inclusion companion} of $\mathcal{S}$, denoted by $\mathcal{S}^r=\langle\lang\vdash^r\rangle$, is defined as follows. For any $\Gamma\cup\{\alpha\}\subseteq\lang$, 
    \[
    \Gamma\vdash^{r}\alpha\;\hbox{iff either}\;\Gamma\;\hbox{contains an \emph{$\mathcal{S}$-antitheorem}, or}\;\Gamma\vdash\alpha\;\hbox{and}\;\var(\alpha)\subseteq\var(\Gamma).
    \]
    An $\mathcal{S}$-antitheorem is a set $\Sigma\subseteq\lang$ such that for every substitution $\sigma$, $\sigma(\Sigma)\vdash\varphi$ for all $\varphi\in\lang$, i.e., $\sigma(\Sigma)$ \emph{explodes} in $\mathcal{S}$ for every $\sigma$. 
    \item The \emph{pure right variable inclusion companion} of $\mathcal{S}$, denoted by $\mathcal{S}^{pr}=\langle\lang\vdash^{pr}\rangle$, is defined as follows. For any $\Gamma\cup\{\alpha\}\subseteq\lang$, 
    \[    \Gamma\vdash^{pr}\alpha\;\hbox{iff}\;\Gamma\vdash\alpha\;\hbox{and}\;\var(\alpha)\subseteq\var(\Gamma).
    \]
    \end{enumerate}
\end{dfn}

\begin{rem}
    The above definition of a right variable inclusion companion agrees with the ones in \cite{PaoliPraBaldiSzmuc2021,PraBaldi2020}. However, in \cite{BonzioPaoliPraBaldi2022}, these are defined with the following slight variation.
    For any $\Gamma\cup\{\alpha\}\subseteq\lang$, 
    \[
    \Gamma\vdash^{r}\alpha\;\hbox{iff either}\;\Gamma\;\hbox{is an $\mathcal{S}$-antitheorem, or}\;\Gamma\vdash\alpha\;\hbox{and}\;\var(\alpha)\subseteq\var(\Gamma).
    \]
    The same has been indicated in \cite{CiuniFergusonSzmuc2019}. This difference is, of course, inconsequential in the presence of monotonicity. We choose to work with the weaker definition here as we aim to be frugal with our assumptions.
\end{rem}

It is easy to see that the pure right variable inclusion companion of a logic $\mathcal{S}$ is paraconsistent with respect to a unary (negation) operator $\neg$, provided there are at least two distinct variables. The following result tells us that they are also NF-paraconsistent provided there is an ample supply of variables.

\begin{thm}\label{thm:Spr-NFpara}
    Suppose $\mathcal{S}=\langle\lang,\vdash\rangle$ is a logic. Then $\mathcal{S}^{pr}=\langle\lang,\vdash^{pr}\rangle$ is NF-paraconsistent. 
\end{thm}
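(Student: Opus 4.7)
The proof will be short and turns essentially on the variable-inclusion side condition together with the fact that $V$ is denumerable, so any finite set of formulas fails to mention some variable.

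The plan is to verify gECQ fails by producing a witness $\alpha$ such that for every $\beta$, the closure $C_{\vdash^{pr}}(\{\alpha,\beta\})$ misses some element of $\lang$. In fact any $\alpha \in \lang$ will work, so I would simply fix an arbitrary $\alpha \in \lang$ (for concreteness, one may take $\alpha = p$ for some variable $p \in V$) and then let $\beta \in \lang$ be arbitrary.

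Next I would exploit the finiteness of $\var(\alpha) \cup \var(\beta)$ against the denumerability of $V$: there exists $q \in V$ with $q \notin \var(\{\alpha,\beta\})$. I claim $q \notin C_{\vdash^{pr}}(\{\alpha,\beta\})$. Indeed, $\var(q) = \{q\}$, and since $q \notin \var(\{\alpha,\beta\})$ we have $\var(q) \not\subseteq \var(\{\alpha,\beta\})$. By the definition of $\vdash^{pr}$ (Definition \ref{dfn:rvarinclusion}(ii)), the variable-inclusion clause $\var(q) \subseteq \var(\{\alpha,\beta\})$ is a necessary condition for $\{\alpha,\beta\} \vdash^{pr} q$, so this fails outright, independently of whether $\{\alpha,\beta\} \vdash q$ holds in $\mathcal{S}$. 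Therefore $C_{\vdash^{pr}}(\{\alpha,\beta\}) \neq \lang$, and since $\beta$ was arbitrary, gECQ fails at $\alpha$.

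There is no real obstacle here; the only thing to keep an eye on is that no hypothesis on $\mathcal{S}$ itself (reflexivity, monotonicity, transitivity, presence or absence of a falsity constant) is needed, because the variable-inclusion side condition single-handedly blocks any fresh variable from entering the closure. This also explains why the analogous statement for $\mathcal{S}^{l}$ or $\mathcal{S}^{pl}$ required extra hypotheses (Theorems \ref{thm:Sl-NFpara}--\ref{thm:Sl-NFpara3}): there, the side condition $\var(\Delta) \subseteq \var(\alpha)$ pushes \emph{toward} the conclusion rather than restricting it, so constants can still explode; here, on the contrary, the side condition lives on the right and fresh variables are automatically unreachable.
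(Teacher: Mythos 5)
Your proof is correct and follows essentially the same route as the paper's: pick a fresh variable $q\notin\var(\{\alpha,\beta\})$, note that the right variable inclusion condition $\var(q)\subseteq\var(\{\alpha,\beta\})$ fails, and conclude $C_{\vdash^{pr}}(\{\alpha,\beta\})\neq\lang$. The only (inessential) difference is that you argue directly for an arbitrary $\alpha$ while the paper argues by contradiction with $\alpha$ a propositional variable.
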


\begin{proof}
    Suppose $\mathcal{S}^{pr}$ is not NF-paraconsistent. Let $p\in V$. So, by gECQ, there exists $\beta\in\lang$ such that $C_{\vdash^{pr}}(\{p,\beta\})=\lang$.

    Let $q\in V\setminus\var(\{p,\beta\})$. So, $\{p,\beta\}\vdash^{pr}q$. This implies that $q\in\var(\{p,\beta\})$, which contradicts our assumption about $q$. Hence $\mathcal{S}^{pr}$ must be NF-paraconsistent.
\end{proof}

The right variable inclusion companion of a logic is not paraconsistent, in general. For example, Bochvar logic, $B_3$, which is the right variable inclusion companion of classical propositional logic, is not paraconsistent (see \cite{BonzioPaoliPraBaldi2022,PraBaldi2020}). Hence $B_3$ is not NF-paraconsistent, by Theorem \ref{thm:NFpara->para}.

We can, however, show that if a logic $\mathcal{S}=\langle\lang,\vdash\rangle$ is NF-paraconsistent, then its right variable inclusion companion is also NF-paraconsistent, provided $\mathcal{S}$ satisfies a weak form of monotonicity, viz., monotonicity for the trivial subsets of $\lang$. This can be described as follows. For any $\Gamma\subseteq\lang$, if $\Gamma$ is $\mathcal{S}$-trivial, i.e., if $C_\vdash(\Gamma)=\lang$, then every superset of $\Gamma$ is also $\mathcal{S}$-trivial, i.e., $C_\vdash(\Gamma^\prime)=\lang$ for all $\Gamma^\prime\supseteq\Gamma$.

\begin{lem}\label{lem:Sr-trivial:S-trivial}
    Suppose $\mathcal{S}=\langle\lang,\vdash\rangle$ is a logic. Then every finite $\mathcal{S}^r$-trivial set contains an $\mathcal{S}$-trivial set.
\end{lem}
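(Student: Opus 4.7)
The plan is to exploit the finiteness of $\Gamma$ together with the fact that $V$ is denumerable to rule out the second disjunct in the definition of $\vdash^r$ at a suitably chosen formula, forcing the first disjunct (the presence of an $\mathcal{S}$-antitheorem) to hold. I would then observe that an $\mathcal{S}$-antitheorem is automatically $\mathcal{S}$-trivial by instantiating the definition at the identity substitution.

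More concretely, suppose $\Gamma \subseteq \lang$ is finite and $\mathcal{S}^r$-trivial, i.e., $C_{\vdash^r}(\Gamma) = \lang$. Since $\Gamma$ is finite, $\var(\Gamma)$ is a finite subset of $V$, and because $V$ is denumerable, there is a variable $q \in V \setminus \var(\Gamma)$. In particular, $q \in \lang$, so $\Gamma \vdash^r q$; and since $\var(q) = \{q\} \not\subseteq \var(\Gamma)$, the second disjunct in Definition \ref{dfn:rvarinclusion}(i) fails for $\alpha = q$. Hence the first disjunct must hold: $\Gamma$ contains an $\mathcal{S}$-antitheorem $\Sigma$.

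Finally, applying the identity substitution $\sigma = \mathrm{id}_\lang$ to $\Sigma$ gives $\sigma(\Sigma) = \Sigma$, and by the defining property of an $\mathcal{S}$-antitheorem we get $\Sigma \vdash \varphi$ for every $\varphi \in \lang$. Thus $\Sigma$ is an $\mathcal{S}$-trivial subset of $\Gamma$, as required. There is no real obstacle here; the only subtlety is noticing that the finiteness hypothesis on $\Gamma$ is exactly what guarantees a ``fresh'' variable $q$ witnessing the failure of the variable-inclusion clause.
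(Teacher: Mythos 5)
Your proof is correct and follows essentially the same route as the paper's: pick a fresh variable $q\notin\var(\Gamma)$ (available because $\Gamma$ is finite and $V$ is denumerable) to force the antitheorem disjunct of $\vdash^r$, then specialize the antitheorem property at the identity substitution to get an $\mathcal{S}$-trivial subset. No gaps.
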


\begin{proof}
    Suppose $\Gamma$ is a finite $\mathcal{S}^r$-trivial set. Let $q\in V\setminus\var(\Gamma)$. Then $\Gamma\vdash^r q$. So, either $q\in\var(\Gamma)$ and $\Gamma\vdash q$, or $\Gamma$ contains an $\mathcal{S}$-antitheorem. Since $q\notin\var(\Gamma)$, $\Gamma$ must contain an $\mathcal{S}$-antitheorem. Let $\Delta\subseteq\Gamma$ be an $\mathcal{S}$-antitheorem. This implies that $C_\vdash(\sigma(\Delta))=\lang$ for every substitution $\sigma$. Thus, in particular, $C_\vdash(\Delta)=\lang$ ($\sigma$ is the identity endomorphism from $\lang$ to itself in this case). Hence $\Delta$ is an $\mathcal{S}$-trivial set contained in $\Gamma$.
\end{proof}

\begin{thm}\label{thm:S-NFpara->Sr-NFpara}
    Suppose $\mathcal{S}=\langle\lang,\vdash\rangle$ that satisfies the property of monotonicity for trivial sets. If $\mathcal{S}$ is NF-paraconsistent, then $\mathcal{S}^r$ is also NF-paraconsistent.
\end{thm}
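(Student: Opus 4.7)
My plan is to proceed by contraposition: assume $\mathcal{S}^r$ is not NF-paraconsistent and derive that $\mathcal{S}$ is not NF-paraconsistent either, using the hypothesis that $\mathcal{S}$ satisfies monotonicity for trivial sets.

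First, I would unpack the failure of NF-paraconsistency in $\mathcal{S}^r$ via gECQ: for every $\alpha\in\lang$, there exists some $\beta\in\lang$ such that $C_{\vdash^r}(\{\alpha,\beta\})=\lang$. In particular, the two-element (hence finite) set $\{\alpha,\beta\}$ is $\mathcal{S}^r$-trivial.

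Next, I would invoke Lemma \ref{lem:Sr-trivial:S-trivial}, which is the crucial bridge: every finite $\mathcal{S}^r$-trivial set contains an $\mathcal{S}$-trivial subset. Applying this to $\{\alpha,\beta\}$ yields a $\Delta\subseteq\{\alpha,\beta\}$ with $C_\vdash(\Delta)=\lang$. Now monotonicity for trivial sets immediately lifts triviality from $\Delta$ to the superset $\{\alpha,\beta\}$, giving $C_\vdash(\{\alpha,\beta\})=\lang$. Thus, for each $\alpha\in\lang$ there is a $\beta\in\lang$ with $C_\vdash(\{\alpha,\beta\})=\lang$, which is precisely gECQ in $\mathcal{S}$, contradicting the assumed NF-paraconsistency of $\mathcal{S}$.

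There is essentially no obstacle here — Lemma \ref{lem:Sr-trivial:S-trivial} has already done the heavy lifting by showing that the antitheorem case forced by fresh variables propagates $\mathcal{S}$-triviality back from $\mathcal{S}^r$-triviality on finite sets, and the monotonicity-for-trivial-sets hypothesis is exactly what is needed to bridge the gap from a trivial subset to the originally chosen pair. The only mild point worth noting is that I must apply the lemma only to finite sets, which is automatic here since gECQ only ever produces pairs $\{\alpha,\beta\}$.
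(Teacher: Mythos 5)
Your proposal is correct and follows essentially the same route as the paper's own proof: contrapose, apply gECQ for $\mathcal{S}^r$ to get a finite $\mathcal{S}^r$-trivial pair $\{\alpha,\beta\}$, use Lemma \ref{lem:Sr-trivial:S-trivial} to extract an $\mathcal{S}$-trivial subset, and lift by monotonicity for trivial sets to conclude gECQ holds in $\mathcal{S}$. Your remark that the lemma's finiteness hypothesis is automatically satisfied by two-element sets is the same observation the paper implicitly relies on.
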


\begin{proof}
    Suppose $\mathcal{S}^r$ is not NF-paraconsistent. Let $\alpha\in\lang$. Then by gECQ, there exists $\beta\in\lang$, such that $C_{\vdash^r}(\{\alpha,\beta\})=\lang$. So, $\{\alpha,\beta\}$ is an $\mathcal{S}^r$-trivial set. Hence, by Lemma \ref{lem:Sr-trivial:S-trivial}, there must be an $\mathcal{S}$-trivial $\Delta\subseteq\{\alpha,\beta\}$. Then, by monotonicity for trivial sets, we have that $\{\alpha,\beta\}$ is also $\mathcal{S}$-trivial, i.e., $C_\vdash(\{\alpha,\beta\})=\lang$. This implies that for every $\alpha\in\lang$, there exists $\beta\in\lang$ such that $C_\vdash(\{\alpha,\beta\})=\lang$. Thus, $\mathcal{S}$ satisfies gECQ and, therefore, is not NF-paraconsistent. Hence if $\mathcal{S}$ satisfies monotonicity for trivial sets and is NF-paraconsistent, then $\mathcal{S}^r$ is also NF-paraconsistent.
\end{proof}

\subsection{A quasi-negation}
Suppose $\mathcal{S}=\langle\lang,\vdash\rangle$ is a logic. For any $\alpha,\beta\in\lang$, if $C_\vdash(\{\alpha,\beta\})=\lang$, then $\beta$ acts like a negation of $\alpha$. This can be justified by noting that in case $\mathcal{S}$ is not paraconsistent with respect to a unary operator $\neg$ in $\mathcal{S}$, i.e., $\neg$-ECQ holds, then $\neg\alpha$ is indeed such a $\beta$. However, in general, such a $\beta$ need not be unique. For example, in the case of classical propositional logic (CPC) or intuitionistic propositional logic (IPC), we have $C_\vdash(\{\alpha,\alpha\limp\neg\alpha\})=\lang$. Thus, $\alpha\limp\neg\alpha$ is also an instance of such a $\beta$. Motivated by this, we now have the following definition.

\begin{dfn}\label{dfn:quasi-neg}
Suppose $\mathcal{S}=\langle\lang,\vdash\rangle$ is a logic and $\alpha\in\lang$. Then a \emph{quasi-negation} of $\alpha$ is any $\beta\in\lang$ such that $C_\vdash(\{\alpha,\beta\})=\lang$. The set of all quasi-negations of $\alpha$ is denoted by $QN(\alpha)$. Lastly, we denote a quasi-negation of $\alpha$ by $\qneg\alpha$, provided $QN(\alpha)\neq\emptyset$. 
\end{dfn}

\begin{rem}
    The symbol $\qneg$ is not a connective or logical operator. Given $\alpha\in\lang$, if $QN(\alpha)\neq\emptyset$, $\qneg\alpha$ represents any member of $QN(\alpha)$. Thus, in the case of a logic $\langle\lang,\vdash\rangle$, where $\lang$ is the set of formulas in the usual sense, the string $\qneg\alpha$ does not denote a single formula, but instead, it represents any formula in $QN(\alpha)\subseteq\lang$. Finally, this notation is not absolutely necessary since all the results involving quasi-negation can be stated in terms of $QN(\cdot)$. However, the notation offers a certain convenience and perhaps helps to visualize it as a negation.
\end{rem}

We now explore some properties of the quasi-negation in the following theorem.

\begin{thm}\cite[Theorem 2.17]{BasuRoy2022}
Suppose $\mathcal{S}=\langle\lang,\vdash\rangle$ is a logic and $\alpha,\beta\in\lang$. 
\begin{enumerate}[label=(\arabic*)]
    \item $\alpha$ is a $\qneg\qneg\alpha$, i.e., $\alpha\in QN(\qneg\alpha)$. (Quasi double negation)
    
    \item $\alpha$ is a $\qneg\beta$ iff $\beta$ is a $\qneg\alpha$, i.e., $\alpha\in QN(\beta)$ iff $\beta\in QN(\alpha)$. (Quasi contraposition).
    \item If $\mathcal{S}$ is Tarskian and $\alpha\vdash\beta$, then any $\qneg\beta$ is a $\qneg\alpha$, i.e., $QN(\beta)\subseteq QN(\alpha)$. 
\end{enumerate}
\end{thm}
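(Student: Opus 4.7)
The plan is straightforward: parts (1) and (2) are immediate from the definition of $QN(\cdot)$ together with the fact that $\{\alpha,\beta\}=\{\beta,\alpha\}$, while part (3) is a routine application of all three Tarskian conditions. I do not anticipate any real obstacle; the content of the theorem is essentially that taking $QN$ is symmetric in its two arguments, together with a monotonicity-transitivity computation.

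For part (1), pick any $\qneg\alpha\in QN(\alpha)$; by definition $C_\vdash(\{\alpha,\qneg\alpha\})=\lang$. Since $\{\alpha,\qneg\alpha\}=\{\qneg\alpha,\alpha\}$, this literally says $C_\vdash(\{\qneg\alpha,\alpha\})=\lang$, i.e.\ $\alpha\in QN(\qneg\alpha)$. Part (2) is the same observation applied symmetrically: $\alpha\in QN(\beta)$ unfolds to $C_\vdash(\{\beta,\alpha\})=\lang$, which is the same set-condition as $C_\vdash(\{\alpha,\beta\})=\lang$, i.e.\ $\beta\in QN(\alpha)$.

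For part (3), assume $\mathcal{S}$ is Tarskian and $\alpha\vdash\beta$. Let $\gamma\in QN(\beta)$, so $C_\vdash(\{\beta,\gamma\})=\lang$; the goal is to show $C_\vdash(\{\alpha,\gamma\})=\lang$. First, from $\alpha\vdash\beta$ and monotonicity we have $\beta\in C_\vdash(\{\alpha,\gamma\})$; from reflexivity we have $\gamma\in C_\vdash(\{\alpha,\gamma\})$. Hence $\{\beta,\gamma\}\subseteq C_\vdash(\{\alpha,\gamma\})$. Transitivity then yields $C_\vdash(\{\beta,\gamma\})\subseteq C_\vdash(\{\alpha,\gamma\})$, and since the left-hand side equals $\lang$ so does the right-hand side. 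Therefore $\gamma\in QN(\alpha)$, which proves $QN(\beta)\subseteq QN(\alpha)$.

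The only mildly subtle point is the notational one flagged in the remark preceding the theorem: $\qneg\alpha$ is not a specific element but stands for an arbitrary member of $QN(\alpha)$, so in part (1) one must read ``$\alpha$ is a $\qneg\qneg\alpha$'' as the universally quantified statement ``for every $\gamma\in QN(\alpha)$, $\alpha\in QN(\gamma)$'', which is exactly what the one-line argument establishes. No deeper structure of $\mathcal{S}$ is needed for (1) or (2); the Tarskian hypothesis is used only in (3), and each of reflexivity, monotonicity and transitivity is invoked exactly once.
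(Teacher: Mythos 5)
Your proof is correct: parts (1) and (2) are indeed immediate from the symmetry of the set $\{\alpha,\beta\}$ in Definition \ref{dfn:quasi-neg}, and your part (3) correctly chains monotonicity, reflexivity, and transitivity to get $C_\vdash(\{\beta,\gamma\})\subseteq C_\vdash(\{\alpha,\gamma\})=\lang$. The paper itself states this result without proof (importing it from the cited earlier work), and your argument is the evident one, including the careful reading of $\qneg$ as ranging over all of $QN(\cdot)$.
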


The following result establishes some connections between the above defined quasi-negation and the concepts of paraconsistency and NF-paraconsistency.

\begin{thm}\cite[Theorem 2.18]{BasuRoy2022}
Suppose $\mathcal{S}=\langle\lang,\vdash\rangle$ is a logic. 
\begin{enumerate}[label=(\arabic*)]
    \item Suppose $\mathcal{S}$ has a unary operator $\neg$. Then $\mathcal{S}$ is paraconsistent with respect to $\neg$ iff there exists an $\alpha\in\lang$ such that $\neg\alpha\notin QN(\alpha)$.
    \item $\mathcal{S}$ is NF-paraconsistent iff there exists an $\alpha\in\lang$ such that $QN(\alpha)=\emptyset$.
    \item If $\mathcal{S}$ is monotonic and there does not exist any nullary operator or constant $\bot\in\lang$ such that $QN(\bot)=\lang$, then $\mathcal{S}$ is $\bot$-paraconsistent.
\end{enumerate}
\end{thm}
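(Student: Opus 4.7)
The plan is to prove each of the three parts by unpacking the definitions and then, in part (3), using monotonicity in the contrapositive form. All three statements are essentially translations between the definition of $QN(\cdot)$ and the various notions of (non-)paraconsistency, so the proof should be short; the only part requiring a structural hypothesis is (3).

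For part (1), I would argue as a chain of equivalences: $\mathcal{S}$ is paraconsistent with respect to $\neg$ iff $\neg$-ECQ fails iff there exists $\alpha \in \lang$ such that $\{\alpha,\neg\alpha\} \not\vdash \beta$ for some $\beta \in \lang$ iff $C_\vdash(\{\alpha,\neg\alpha\}) \neq \lang$ for some $\alpha$. By the definition of a quasi-negation (Definition \ref{dfn:quasi-neg}), the last condition is exactly $\neg\alpha \notin QN(\alpha)$. So (1) reduces to reading off the definitions.

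Part (2) is similarly direct. By the definition of NF-paraconsistency, $\mathcal{S}$ is NF-paraconsistent iff gECQ fails iff there exists $\alpha \in \lang$ such that for every $\beta \in \lang$, $C_\vdash(\{\alpha,\beta\}) \neq \lang$. Translating via Definition \ref{dfn:quasi-neg}, the inner condition says $\beta \notin QN(\alpha)$ for every $\beta$, i.e., $QN(\alpha) = \emptyset$. So the biconditional follows at once.

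For part (3), I would argue by contrapositive. Suppose $\mathcal{S}$ is not $\bot$-paraconsistent, i.e., there exists a constant $\bot \in \lang$ such that $\bot \vdash \gamma$ for every $\gamma \in \lang$; equivalently $C_\vdash(\{\bot\}) = \lang$. Now using monotonicity, for every $\beta \in \lang$ we have $\{\bot\} \subseteq \{\bot,\beta\}$, hence $\lang = C_\vdash(\{\bot\}) \subseteq C_\vdash(\{\bot,\beta\}) \subseteq \lang$, so $C_\vdash(\{\bot,\beta\}) = \lang$. By Definition \ref{dfn:quasi-neg}, this says $\beta \in QN(\bot)$. Since $\beta$ was arbitrary, $QN(\bot) = \lang$, contradicting the hypothesis of (3). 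The only place any hypothesis on $\mathcal{S}$ is really used is this monotonicity step, so that is the one subtlety worth flagging; otherwise the whole theorem is a definitional exercise with no real obstacle.
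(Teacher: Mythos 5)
Your proof is correct: parts (1) and (2) are indeed immediate translations of the definitions of $\neg$-ECQ, gECQ, and $QN(\cdot)$, and your contrapositive argument for (3) uses monotonicity in exactly the right place. The paper itself states this result without proof (citing Theorem 2.18 of the earlier work), and your argument is the evident definitional one, so there is nothing to contrast.
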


\begin{rem}
It follows from the above theorem that a logic is NF-paraconsistent iff it is paraconsistent with respect to the quasi-negation.
\end{rem}

We end this subsection with a few properties of quasi-negation in the case of CPC $=\langle\lang,\vdash_{\mathrm{CPC}}\rangle$, where $\vdash_{\mathrm{CPC}}$ is the usual classical consequence relation, using the kite of negations in \cite{Dunn1999}. The following observations follow easily using the standard 2-element Boolean algebra interpretation of CPC $\langle\{0,1\},\land,\lor,\neg\rangle$. For any $\alpha\in\lang$, $QN(\alpha)\neq\emptyset$, i.e., there exists a $\qneg\alpha$ and $C_{\vdash_{\mathrm{CPC}}}(\{\alpha,\qneg\alpha\})=\lang$. This implies that for any valuation $v:\lang\to\{0,1\}$, if $v(\alpha)=1$, then $v(\qneg\alpha)=0$. Here, by $v(\qneg\alpha)=0$, we mean $v(\beta)=0$ for every $\beta\in QN(\alpha)$. Conversely, suppose $v:\lang\to\{0,1\}$ is a valuation such that $v(\qneg\alpha)=0$. Then, in particular, $v(\neg\alpha)=0$ since $\neg\alpha\in QN(\alpha)$. So, $v(\alpha)=1$. Thus, we can say that for any $\alpha\in\lang$  and any valuation $v:\lang\to\{0,1\}$, $v(\alpha)=1$ iff $v(\qneg\alpha)=0$. Now, for any $\alpha,\beta\in\lang$, the following properties are easy to check.

\begin{enumerate}[label=(\arabic*)]
    \item If $\alpha\vdash\beta$, then $\qneg\beta\vdash\qneg\alpha$ (Contraposition)
    \item $\alpha\vdash\neg\qneg\alpha$ and $\alpha\vdash\qneg\neg\alpha$ (Galois double negation)
    \item $\alpha\vdash\qneg\,\qneg\alpha$ (Constructive double negation)
    \item $\qneg\,\qneg\alpha\vdash\alpha$ (Classical double negation)
    \item $\alpha\land\qneg\alpha\vdash\beta$ (Absurdity)
\end{enumerate}

It is also easy to check that in the case of IPC, all the above properties, except (4), are satisfied by the quasi-negation. Thus, at least for CPC and IPC, the quasi-negations of a formula behave in much the same way as the regular classical and intuitionistic negation operators, respectively.

\section{Principles of explosion}
In this section, we discuss further generalizations of ECQ. One can develop a notion of paraconsistency via the failure of each of these. We have, however, kept the discussion here restricted to principles of explosion only. Suppose $\mathcal{S}=\langle\lang,\vdash\rangle$ is a logic. As mentioned in the introductory section, a principle of explosion in $\mathcal{S}$ might be described, in general, as one that allows a set $\Gamma\subsetneq\lang$ to blow up to the whole set $\lang$, i.e., to result in $C_\vdash(\Gamma)=\lang$.

The first principle of explosion is called sECQ (the `s' here denotes that this is a `set-based' principle of explosion). This can be described as follows. 
\begin{center}
   For each $\alpha\in\lang$, there exists a set $\Gamma\subsetneq\lang$ such that $\alpha\in\Gamma$ and $C_\vdash(\Gamma)=\lang$. 
\end{center}

\begin{thm}\label{thm:gECQ->sECQ}
    Suppose $\mathcal{S}=\langle\lang,\vdash\rangle$ is a logic.
    \begin{enumerate}[label=(\roman*)]
        \item If $\neg$-ECQ, where $\neg$ is a unary operator in $\mathcal{S}$, holds in $\mathcal{S}$, then sECQ also holds in $\mathcal{S}$, provided $\{\alpha,\neg\alpha\}\subsetneq\lang$ for all $\alpha\in\lang$.
        \item If gECQ holds in $\mathcal{S}$, then sECQ also holds in $\mathcal{S}$, provided $\lang$ has at least three distinct elements. 
    \end{enumerate}
\end{thm}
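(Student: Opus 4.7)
The proof is essentially a matter of exhibiting the right witness set $\Gamma$ for each $\alpha$ in each of the two cases, and then checking it is a proper subset of $\lang$. Both parts are short, but I will separate them to mirror the statement.

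For part (i), given any $\alpha \in \lang$, the plan is to take $\Gamma = \{\alpha, \neg\alpha\}$. By $\neg$-ECQ applied to $\alpha$ we have $\{\alpha,\neg\alpha\} \vdash \beta$ for every $\beta \in \lang$, which translates directly into $C_\vdash(\Gamma) = \lang$. Membership of $\alpha$ in $\Gamma$ is immediate, and the hypothesis $\{\alpha,\neg\alpha\}\subsetneq\lang$ supplies the required strict inclusion. No other manipulation is needed.

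For part (ii), fix $\alpha \in \lang$. gECQ furnishes some $\beta \in \lang$ with $C_\vdash(\{\alpha,\beta\}) = \lang$, so I will take $\Gamma = \{\alpha,\beta\}$. Clearly $\alpha \in \Gamma$ and $C_\vdash(\Gamma) = \lang$. The only thing to verify is that $\Gamma \subsetneq \lang$. Since $\Gamma$ has at most two elements (one if $\alpha = \beta$, two otherwise) and $\lang$ is assumed to have at least three distinct elements, any $\gamma \in \lang \setminus \{\alpha,\beta\}$ certifies the proper inclusion.

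There is no real obstacle; the only subtlety worth flagging is that in (ii) one must allow for the possibility $\alpha = \beta$ when checking cardinality, which is why the hypothesis is phrased as \emph{at least three} distinct elements rather than \emph{at least three} elements beyond $\alpha$. I do not expect to need reflexivity, monotonicity, or transitivity at any point in the argument, so the theorem holds for completely arbitrary logics satisfying only the stated side conditions.
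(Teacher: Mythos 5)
Your proof is correct and fills in exactly the witness-set argument that the paper leaves implicit when it calls the proof ``straightforward from the formulations of $\neg$-ECQ, gECQ, and sECQ.'' The choice of $\Gamma=\{\alpha,\neg\alpha\}$ in (i) and $\Gamma=\{\alpha,\beta\}$ in (ii), together with the cardinality check, is the intended argument.
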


\begin{proof}
    The proof is straightforward from the formulations of $\neg$-ECQ, gECQ, and sECQ.
\end{proof}

\begin{rem}
   The condition that $\lang$ has at least three distinct elements is enough to 
   guarantee that if $\neg$-ECQ holds in $\mathcal{S}$, then sECQ also holds in it. Thus, the above theorem could have been stated more simply. However, the proviso that $\{\alpha,\neg\alpha\}\subsetneq\lang$ for all $\alpha$, in part (i) of the above theorem, is more general as this doesn't require that $\neg\alpha\neq\alpha$ for all $\alpha$.
   
   sECQ is clearly a generalization of ECQ and gECQ. Any paraconsistent logic that is not NF-paraconsistent, such as PWK, and IPWK (considered in Example \ref{exa:PWK-NotNFpara}), are instances of logics where gECQ holds, and hence by the above theorem, sECQ also holds. However, since these are paraconsistent, ECQ does not hold in these logics.
   
   The following are some further examples of logics, where ECQ fails but gECQ and sECQ hold. See Examples \ref{exa:sECQ!->spECQ} and \ref{exa:pfECQ1!->spECQ} for logics, where sECQ holds but gECQ fails. 
\end{rem}

\begin{exa}\label{exa:sECQ-P1}
    Let $\mathcal{S}=\langle\lang,\vdash\rangle$ be the logic $P^1$ proposed by Sette in \cite{Sette1973}. Here $\lang$ is the formula algebra over a denumerable set of propositional variables $V$, of type $\{\neg,\limp\}$, and $\vdash$ is a Tarskian consequence relation. Apart from the original Hilbert-style system presented in \cite{Sette1973}, alternative axiomatizations for $P^1$ can be found in \cite{Ciucura2015,Ciucura2020}. These articles also include semantics for $P^1$ and the soundness and completeness results. $P^1$ is a paraconsistent logic in the sense that ECQ fails in it but only for $p\in V$. In other words, for all $\alpha\in\lang\setminus V$, $C_\vdash(\{\alpha,\neg\alpha\})=\lang$. 

    It can be easily shown that $\vdash\alpha\limp\alpha$. Then, using the semantics and the completeness result provided in the above articles, it is straightforward to derive that $C_\vdash(\neg(\alpha\limp\alpha))=\lang$. Now, since $P^1$ is monotonic, $C_\vdash(\{\alpha,\neg(\alpha\limp\alpha)\})=\lang$ for all $\alpha\in\lang$. Thus, gECQ holds in $P^1$. (Instead of $\neg(\alpha\limp\alpha)$, we can use the negation of any theorem of $P^1$ to arrive at the same conclusion.) Since $\lang$ is denumerable and hence, has at least three distinct elements, by Theorem \ref{thm:gECQ->sECQ}, sECQ also holds in $P^1$.

    It is, however, easier to show that sECQ holds in $P^1$ directly as follows. Let $\alpha\in\lang$ and $\Gamma=\{\alpha,\neg\alpha,\neg\neg\alpha\}$. Then $\alpha\in\Gamma\subsetneq\lang$. Since $C_\vdash(\{\neg\alpha,\neg\neg\alpha\})=\lang$, by monotonicity, $C_\vdash(\Gamma)=\lang$. Hence, sECQ holds in $P^1$.
\end{exa}

\begin{rem}\label{rem:GentlyParaconsistent-sECQ}
    Sette's logic, $P^1$, is a member of the class of \emph{gently paraconsistent calculi/logics} discussed in \cite{Ciucura2020-1}. In each of these logics, ECQ fails. However, if $\langle\lang,\vdash\rangle$ is a gently paraconsistent logic, then for any $\alpha\in\lang$, $C_\vdash(\{\alpha,\neg\alpha,\neg\neg\alpha\})=\lang$, which is called the \emph{principle of gentle explosion}. Thus, clearly, sECQ holds.

    Each of the gently paraconsistent logics contains the positive fragment of classical propositional logic, and hence, as in $P^1$, $\alpha\limp\alpha$ is a theorem in each of them. Thus, by the same arguments as in the previous example, gECQ also holds in the gently paraconsistent logics.
\end{rem}

\begin{rem}\label{rem:Pm-sECQ}
    $P^1$ is also a member of the hierarchies of paraconsistent systems, $P^m$ and $P^{*m}$, where $m\ge1$, presented in \cite{Ciucura2020}. These logics use the same set of formulas $\lang$ as $P^1$, and ECQ fails in each of these systems. 
    
    However, in the logic $P^m=\langle\lang,\vdash\rangle$, where $m\ge1$, for any $\alpha\in\lang$, $C_\vdash(\{\alpha,\neg\alpha,\ldots,\neg^m\alpha,\neg^{m+1}\alpha\})=\lang$, where for any $n\ge1$, $\neg^n\alpha$ denotes $\underbrace{\neg\ldots\neg}_{n\hbox{ times}}\alpha$. Thus, sECQ holds in $P^m$ for each $m\ge1$. As in $P^1$, $\alpha\limp\alpha$ is a theorem of each $P^m$, and hence, by the same arguments as for $P^1$, gECQ also holds in these logics.

    The same arguments hold for the logics $P^{*m}$ as these logics have the same properties that are used for proving that gECQ and sECQ hold in $P^m$.
\end{rem}

The next principle of explosion is a slight variation of sECQ. We call this sECQ$^\prime$. This can be described as follows.
\begin{center}
   For each $\alpha\in\lang$, there exists a set $\Gamma\subsetneq\lang$ such that $\Gamma\cup\{\alpha\}\subsetneq\lang$ and $C_\vdash(\Gamma\cup\{\alpha\})=\lang$. 
\end{center}

\begin{thm}
    Suppose $\mathcal{S}=\langle\lang,\vdash\rangle$ is a logic. sECQ holds in $\mathcal{S}$ iff sECQ$^\prime$ holds in $\mathcal{S}$, i.e., sECQ is equivalent to sECQ$^\prime$.
\end{thm}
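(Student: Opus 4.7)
The plan is to prove the equivalence by two short, direct implications, using only elementary set-theoretic manipulations; no additional hypotheses on $\vdash$ (like monotonicity) are required.

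For the forward direction, suppose sECQ holds. Fix any $\alpha\in\lang$ and let $\Gamma\subsetneq\lang$ be as guaranteed by sECQ, so $\alpha\in\Gamma$ and $C_\vdash(\Gamma)=\lang$. I would then take the same $\Gamma$ (or equivalently $\Gamma\setminus\{\alpha\}$) as the witness for sECQ$^\prime$: since $\alpha\in\Gamma$, we have $\Gamma\cup\{\alpha\}=\Gamma\subsetneq\lang$, and therefore $C_\vdash(\Gamma\cup\{\alpha\})=C_\vdash(\Gamma)=\lang$. The side condition $\Gamma\subsetneq\lang$ is inherited from sECQ. Thus sECQ$^\prime$ holds.

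For the converse, suppose sECQ$^\prime$ holds. Fix any $\alpha\in\lang$ and let $\Gamma\subsetneq\lang$ be the set produced by sECQ$^\prime$, so $\Gamma\cup\{\alpha\}\subsetneq\lang$ and $C_\vdash(\Gamma\cup\{\alpha\})=\lang$. I would then take $\Gamma^\ast:=\Gamma\cup\{\alpha\}$ as the witness for sECQ: by construction $\alpha\in\Gamma^\ast$, $\Gamma^\ast\subsetneq\lang$, and $C_\vdash(\Gamma^\ast)=\lang$. Hence sECQ holds.

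Both directions are essentially bookkeeping, so there is no real obstacle; the only subtle point is that sECQ$^\prime$ requires \emph{both} $\Gamma\subsetneq\lang$ and $\Gamma\cup\{\alpha\}\subsetneq\lang$, while sECQ bundles these into the single requirement that $\alpha$-containing set be a proper subset of $\lang$. I would make sure, when writing up the forward direction, to note explicitly that $\Gamma\cup\{\alpha\}=\Gamma$ after the $\alpha\in\Gamma$ step, so that the strict inclusion is preserved. Since the argument is purely combinatorial and does not invoke Tarskian conditions, the theorem statement is optimal as phrased.
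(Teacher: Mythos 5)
Your proof is correct and follows essentially the same route as the paper's: the forward direction reuses the sECQ witness (the paper formally passes to $\Gamma\setminus\{\alpha\}$, which you note is equivalent), and the converse takes $\Gamma\cup\{\alpha\}$ as the sECQ witness, exactly as in the paper. No gaps.
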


\begin{proof}
    Suppose sECQ holds in $\mathcal{S}$. Let $\alpha\in\lang$. Then there exists $\Gamma\subsetneq\lang$ such that $\alpha\in\Gamma$ and $C_\vdash(\Gamma)=\lang$. Now, let $\Gamma^\prime=\Gamma\setminus\{\alpha\}$. Then $\Gamma^\prime\subsetneq\lang$ such that $\Gamma^\prime\cup\{\alpha\}=\Gamma\subsetneq\lang$ and $C_\vdash(\Gamma^\prime\cup\{\alpha\})=C_\vdash(\Gamma)=\lang$. Thus, sECQ$^\prime$ holds in $\mathcal{S}$.

    Conversely, suppose sECQ$^\prime$ holds in $\mathcal{S}$. Let $\alpha\in\lang$. Then there exists $\Gamma\subsetneq\lang$ such that $\Gamma\cup\{\alpha\}\subsetneq\lang$ and $C_\vdash(\Gamma\cup\{\alpha\})=\lang$. Now, let $\Gamma^\prime=\Gamma\cup\{\alpha\}$. Then $\Gamma^\prime\subsetneq\lang$ such that $\alpha\in\Gamma^\prime$ and $C_\vdash(\Gamma^\prime)=\lang$. Thus, sECQ holds in $\mathcal{S}$.
\end{proof}

\begin{rem}
    In light of the above theorem, sECQ will henceforth denote both of the above principles of explosion: sECQ and sECQ$^\prime$.
\end{rem}

We note that all the principles of explosion that we have discussed so far, viz., ECQ, $\bot$-ECQ, gECQ, and even sECQ, are, in a sense, `pointwise.' Each of these principles revolves around elements of $\lang$, the underlying set in the logic $\langle\lang,\vdash\rangle$. In case of ECQ, gECQ, sECQ, for each $\alpha\in\lang$, there is some set containing $\alpha$ that explodes, and for $\bot$-ECQ, $\bot\in\lang$. We can indeed describe a principle of explosion using a set $\Gamma\subseteq\lang$ in the following way.
\begin{center}
    For all $\Gamma\subseteq\lang$ and for all $\alpha\in\lang$, if $\Gamma\vdash\alpha$ and $\Gamma\vdash\neg\alpha$, then $C_\vdash(\Gamma)=\lang$,
\end{center}
where $\neg$ is a unary operator on $\lang$. However, this is equivalent to $\neg$-ECQ if the logic is reflexive and transitive.
We now formulate some principles of explosion that differ from the above-discussed `pointwise' explosions and are not equivalent to ECQ, either. The first among these is obtained by turning sECQ$^\prime$ around as follows.
\begin{center}
    For all $\Gamma\subsetneq\lang$, there exists $\alpha\in\lang$ such that $\Gamma\cup\{\alpha\}\subsetneq\lang$ and $C_\vdash(\Gamma\cup\{\alpha\})=\lang$.
\end{center}
We call this spECQ (the letter-pair `sp' here denotes set-point in reference to the manner in which the explosion takes place). 

\begin{thm}\label{thm:spECQ->sECQ}
    Suppose $\mathcal{S}=\langle\lang,\vdash\rangle$ is a logic such that $\lang$ has at least two distinct elements. If spECQ holds in $\mathcal{S}$, then gECQ holds in it. Moreover, if $\lang$ has at least three distinct elements, then sECQ also holds in $\mathcal{S}$.
\end{thm}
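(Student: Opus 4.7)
The plan is to derive gECQ directly from spECQ by instantiating spECQ with a singleton set, and then to obtain sECQ by chaining through Theorem \ref{thm:gECQ->sECQ}(ii).

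For the first claim, I would fix an arbitrary $\alpha \in \lang$. Since $\lang$ is assumed to have at least two distinct elements, the singleton $\{\alpha\}$ is a proper subset of $\lang$, so spECQ applies to $\Gamma = \{\alpha\}$. This yields some $\beta \in \lang$ such that $\{\alpha\} \cup \{\beta\} \subsetneq \lang$ and $C_\vdash(\{\alpha\} \cup \{\beta\}) = \lang$. In particular $C_\vdash(\{\alpha, \beta\}) = \lang$, so $\beta$ witnesses gECQ for $\alpha$. Since $\alpha$ was arbitrary, gECQ holds in $\mathcal{S}$.

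For the second claim, under the stronger hypothesis that $\lang$ has at least three distinct elements, I would simply invoke the gECQ established in the previous paragraph together with part (ii) of Theorem \ref{thm:gECQ->sECQ}, which states that gECQ implies sECQ provided $\lang$ has at least three distinct elements. This delivers sECQ in $\mathcal{S}$.

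There is no substantive obstacle here: the result is essentially an unpacking observation, the key realization being that spECQ's quantification over all proper subsets of $\lang$ permits applying it to singletons, which immediately produces the two-element exploding set that gECQ demands. The three-element hypothesis in the second part enters only through the reuse of the earlier theorem; one could in fact get sECQ with two elements via a direct argument using $\Gamma = \{\alpha, \beta\}$ from the first step, but routing through Theorem \ref{thm:gECQ->sECQ}(ii) keeps the exposition uniform with the preceding results.
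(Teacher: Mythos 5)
Your proof is correct and follows essentially the same route as the paper's: instantiate spECQ at the singleton $\{\alpha\}$ to obtain gECQ, then invoke Theorem \ref{thm:gECQ->sECQ}(ii) for sECQ. Your closing aside that sECQ could already be extracted from the exploding set $\{\alpha,\beta\}$ under only the two-element hypothesis is a valid observation, but the main argument matches the paper's.
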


\begin{proof}
    Suppose spECQ holds in $\mathcal{S}$. Let $\alpha\in\lang$. Since $\lang$ has at least two distinct elements, $\{\alpha\}\subsetneq\lang$. Then by spECQ, there exists $\beta\in\lang$ such that $\{\alpha\}\cup\{\beta\}\subsetneq\lang$ and $C_\vdash(\{\alpha\}\cup\{\beta\})=C_\vdash(\{\alpha,\beta\})=\lang$. Thus, gECQ holds in $\mathcal{S}$. Now, suppose $\lang$ has at least three distinct elements. Then, by Theorem \ref{thm:gECQ->sECQ}, sECQ also holds in $\mathcal{S}$.
\end{proof}

The following example shows that sECQ does not imply spECQ and gECQ. 

\begin{exa}\label{exa:sECQ!->spECQ}
    Let $\mathcal{S}=\langle\lang,\vdash\rangle$ be the logic, where $\lang=\NN\setminus\{0\}$, and $\vdash\,\subseteq\pow(\lang)\times\lang$ is such that, for any $\Gamma\subseteq\lang$,
    \[
    C_\vdash(\Gamma)=\left\{\begin{array}{ll}
         \lang&\hbox{if }\Gamma=\{n,n+1,\ldots,2n\}\hbox{ for some }n\in\lang,\\
         \Gamma&\hbox{otherwise}. 
    \end{array}\right.
    \]
    Let $k\in\lang$. Then, $\Gamma=\{k,k+1,\ldots,2k\}\subsetneq\lang$ is such that $k\in\Gamma$ and $C_\vdash(\Gamma)=\lang$. Thus, sECQ holds in $\mathcal{S}$.

    Now, to see that spECQ does not hold in $\mathcal{S}$, let $\Gamma=\{1,4\}$. We note that $\Delta=\{1,2\}$ is the only subset of $\lang$ such that $1\in\Delta$ and $C_\vdash(\Delta)=\lang$. Thus, $C_\vdash(\Gamma\cup\{\alpha\})\neq\lang$ for all $\alpha\in\lang$. Hence, spECQ does not hold in $\mathcal{S}$.

    Lastly, to see that gECQ does not hold in $\mathcal{S}$, we note that the only two element set that explodes is $\Delta=\{1,2\}$. Hence, for instance, there does not exist any $\beta\in\lang$ such that $C_\vdash\{3,\beta\}=\lang$. Hence, gECQ cannot hold in $\mathcal{S}$.
\end{exa}

The following are some other principles of explosion that are entirely free from any reference to elements of $\lang$.
\begin{itemize}
    \item For all $\Gamma\subsetneq\lang$, there exists $\Delta\subsetneq\lang$ such that $\Gamma\subseteq\Delta$ and $C_\vdash(\Delta)=\lang$. (pfECQ1)
    \item For all $\Gamma\subsetneq\lang$, there exists $\Delta\subsetneq\lang$ such that $\Gamma\cup\Delta\subsetneq\lang$ and $C_\vdash(\Gamma\cup\Delta)=\lang$. (pfECQ2)
    \item For all $\Gamma\subsetneq\lang$, there exists $\emptyset\neq\Delta\subsetneq\lang$ such that $\Gamma\cup\Delta\subsetneq\lang$, and for every $\emptyset\neq\Delta^\prime\subseteq\Delta$, $C_\vdash(\Gamma\cup\Delta^\prime)=\lang$. (pfECQ3)
\end{itemize}
The letter-pair `pf' in the above principles denotes point-free. We now investigate the relationships between these various principles of explosion.

\begin{thm}\label{thm:pfECQ1<->pfECQ2}
    Suppose $\mathcal{S}=\langle\lang,\vdash\rangle$ is a logic. pfECQ1 holds in $\mathcal{S}$ iff pfECQ2 holds in it, i.e., pfECQ1 is equivalent to pfECQ2.
\end{thm}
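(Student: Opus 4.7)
The plan is to prove both directions by a direct set-theoretic manipulation, essentially showing that the $\Delta$ witnessing one principle can be converted into a witness for the other with a trivial reassignment. The statements of pfECQ1 and pfECQ2 differ only in whether the exploding proper subset of $\lang$ is required to contain $\Gamma$ (pfECQ1) or merely to combine with $\Gamma$ into an exploding proper subset (pfECQ2); so no hypotheses beyond the definitions are needed, and the argument does not invoke reflexivity, monotonicity, or transitivity.

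For the forward direction (pfECQ1 $\Rightarrow$ pfECQ2), I would fix $\Gamma\subsetneq\lang$, apply pfECQ1 to obtain $\Delta\subsetneq\lang$ with $\Gamma\subseteq\Delta$ and $C_\vdash(\Delta)=\lang$, and then simply take this same $\Delta$ as the witness for pfECQ2. Since $\Gamma\subseteq\Delta$, we have $\Gamma\cup\Delta=\Delta\subsetneq\lang$ and $C_\vdash(\Gamma\cup\Delta)=C_\vdash(\Delta)=\lang$, which is exactly what pfECQ2 demands.

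For the reverse direction (pfECQ2 $\Rightarrow$ pfECQ1), I would again fix $\Gamma\subsetneq\lang$ and apply pfECQ2 to obtain $\Delta\subsetneq\lang$ with $\Gamma\cup\Delta\subsetneq\lang$ and $C_\vdash(\Gamma\cup\Delta)=\lang$. The witness for pfECQ1 is then $\Delta':=\Gamma\cup\Delta$, which by construction is a proper subset of $\lang$ containing $\Gamma$, and which satisfies $C_\vdash(\Delta')=\lang$ directly.

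There is no genuine obstacle here; the only thing to be careful about is making sure the proper-subset conditions are preserved under the reassignments, which they are automatically in each case ($\Delta$ itself remains proper in the first direction because $\Gamma\subseteq\Delta\subsetneq\lang$, and $\Gamma\cup\Delta$ is proper by hypothesis in the second). So the proof is essentially bookkeeping on which set plays the role of the witness.
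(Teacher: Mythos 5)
Your proof is correct and takes essentially the same route as the paper's: in the forward direction the pfECQ1-witness $\Delta$ already serves for pfECQ2 since $\Gamma\cup\Delta=\Delta$, and in the reverse direction $\Gamma\cup\Delta$ is the pfECQ1-witness. The proper-subset bookkeeping you flag is handled identically in the paper.
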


\begin{proof}
    Suppose pfECQ1 holds in $\mathcal{S}$. Let $\Gamma\subsetneq\lang$. Then, by pfECQ1, there exists $\Delta\subsetneq\lang$ such that $\Gamma\subseteq\Delta$ and $C_\vdash(\Delta)=\lang$. Since $\Gamma\subseteq\Delta$, $\Gamma\cup\Delta=\Delta$. So, $\Gamma\cup\Delta\subsetneq\lang$ and $C_\vdash(\Gamma\cup\Delta)=\lang$. Thus, pfECQ2 holds.

    Conversely, suppose pfECQ2 holds in $\mathcal{S}$. Let $\Gamma\subsetneq\lang$. Then, by pfECQ2, there exists $\Delta\subsetneq\lang$ such that $\Gamma\cup\Delta\subsetneq\lang$ and $C_\vdash(\Gamma\cup\Delta)=\lang$. Now, $\Gamma\subseteq\Gamma\cup\Delta$. Thus, pfECQ1 holds in $\mathcal{S}$.
\end{proof}

\begin{rem}
    In light of the above theorem, we will use pfECQ to refer to both pfECQ1 and pfECQ2.
\end{rem}

\begin{thm}\label{thm:spECQ->pfECQ}
    Suppose $\mathcal{S}=\langle\lang,\vdash\rangle$ is a logic. If spECQ holds in $\mathcal{S}$, then pfECQ also holds in it.
\end{thm}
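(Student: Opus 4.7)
The plan is to derive pfECQ2 directly from spECQ by instantiating the existential $\Delta$ with a singleton, and then appeal to Theorem \ref{thm:pfECQ1<->pfECQ2} to get pfECQ in full. Concretely, given an arbitrary $\Gamma\subsetneq\lang$, I would apply spECQ to obtain some $\alpha\in\lang$ satisfying $\Gamma\cup\{\alpha\}\subsetneq\lang$ and $C_\vdash(\Gamma\cup\{\alpha\})=\lang$, and then set $\Delta:=\{\alpha\}$.

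The verification of pfECQ2 for this choice of $\Delta$ breaks into three checks. First, $\Delta\subsetneq\lang$: this is automatic because $\Gamma\cup\{\alpha\}\subsetneq\lang$ forces $\lang$ to contain at least one element outside $\{\alpha\}$, so $\{\alpha\}$ is a proper subset of $\lang$. Second, $\Gamma\cup\Delta=\Gamma\cup\{\alpha\}\subsetneq\lang$, which is exactly the conclusion of spECQ. Third, $C_\vdash(\Gamma\cup\Delta)=C_\vdash(\Gamma\cup\{\alpha\})=\lang$, again directly from spECQ. This establishes pfECQ2, and Theorem \ref{thm:pfECQ1<->pfECQ2} then yields pfECQ.

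I do not expect any genuine obstacle here: the implication is essentially a matter of recognizing that a singleton witness $\{\alpha\}$ for spECQ is already a witness $\Delta$ for pfECQ2. The only small subtlety worth flagging in the write-up is the justification that $\{\alpha\}\subsetneq\lang$, but as noted above this is forced by the strictness in the spECQ hypothesis and so needs no extra assumption on $|\lang|$.
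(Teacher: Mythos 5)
Your proof is correct and essentially matches the paper's: both arguments simply instantiate the witness provided by spECQ. The only cosmetic difference is that the paper targets pfECQ1 directly by taking $\Delta=\Gamma\cup\{\alpha\}$ (so that $\Gamma\subseteq\Delta\subsetneq\lang$ and $C_\vdash(\Delta)=\lang$ are immediate), whereas you target pfECQ2 with $\Delta=\{\alpha\}$ and then invoke Theorem \ref{thm:pfECQ1<->pfECQ2}; your extra observation that $\{\alpha\}\subsetneq\lang$ is forced by $\Gamma\cup\{\alpha\}\subsetneq\lang$ is right and closes the only point that needed checking.
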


\begin{proof}
    Suppose spECQ holds in $\mathcal{S}$. Let $\Gamma\subsetneq\lang$. Then, by spECQ, there exists $\alpha\in\lang$ such that $\Gamma\cup\{\alpha\}\subsetneq\lang$ and $C_\vdash(\Gamma\cup\{\alpha\})=\lang$. Using $\Delta=\Gamma\cup\{\alpha\}$, we see that pfECQ holds in $\mathcal{S}$.
\end{proof}

\begin{thm}\label{thm:pfECQ->sECQ}
    Suppose $\mathcal{S}=\langle\lang,\vdash\rangle$ is a logic such that $\lang$ has at least two distinct elements. If pfECQ holds in $\mathcal{S}$, then sECQ also holds in it.
\end{thm}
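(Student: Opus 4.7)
The plan is direct: pfECQ in its pfECQ1 formulation says that any proper subset of $\lang$ can be extended to an exploding proper subset, so I would just feed it a singleton.

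More precisely, let $\alpha \in \lang$ be arbitrary. Since $\lang$ has at least two distinct elements, the singleton $\{\alpha\}$ is a proper subset of $\lang$. Applying pfECQ1 to $\Gamma := \{\alpha\}$, I obtain $\Delta \subsetneq \lang$ with $\{\alpha\} \subseteq \Delta$ and $C_\vdash(\Delta) = \lang$. This $\Delta$ is a proper subset of $\lang$ containing $\alpha$ and exploding, which is exactly what sECQ requires for $\alpha$. Since $\alpha$ was arbitrary, sECQ holds.

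The equivalence of pfECQ1 and pfECQ2 established in Theorem \ref{thm:pfECQ1<->pfECQ2} means I can cite pfECQ in either form; pfECQ1 is the more convenient one here because it immediately gives a single exploding witness containing $\Gamma$. The hypothesis that $|\lang| \geq 2$ is essential and used exactly once, to guarantee $\{\alpha\} \subsetneq \lang$, so that pfECQ is actually applicable to it. There is no real obstacle: the proof is a one-line application of the definition, and no Tarskian property of $\vdash$ is needed.
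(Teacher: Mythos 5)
Your proof is correct and is essentially identical to the paper's: both apply pfECQ1 to the singleton $\{\alpha\}$, using $|\lang|\ge 2$ only to ensure $\{\alpha\}\subsetneq\lang$, and read off the resulting $\Delta$ as the exploding proper subset containing $\alpha$ that sECQ demands. Nothing is missing.
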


\begin{proof}
    Suppose pfECQ holds in $\mathcal{S}$. Let $\alpha\in\lang$. Since $\lang$ has at least two distinct elements, $\{\alpha\}\subsetneq\lang$. Then, by pfECQ, there exists $\Delta\subsetneq\lang$ such that $\{\alpha\}\subseteq\Delta$, i.e., $\alpha\in\Delta$, and $C_\vdash(\Delta)=\lang$. Thus, sECQ holds in $\mathcal{S}$.
\end{proof}

The following example shows that the converse of Theorem \ref{thm:spECQ->pfECQ} does not hold, i.e., pfECQ does not imply spECQ. It also shows that neither pfECQ nor sECQ implies gECQ.

\begin{exa}\label{exa:pfECQ1!->spECQ}
    Let $\mathcal{S}=\langle\lang,\vdash\rangle$ be a logic, where $\lang$ is infinite, and $\vdash\,\subseteq\pow(\lang)\times\lang$ is such that, for any $\Gamma\subseteq\lang$,
    \[
    C_\vdash(\Gamma)=\left\{\begin{array}{ll}
         \lang&\hbox{if }\Gamma\hbox{ is infinite}, \\
         \Gamma&\hbox{otherwise}.
    \end{array}\right.
    \]
    We first show that pfECQ holds in $\mathcal{S}$. Let $\Gamma\subsetneq\lang$. If $\Gamma$ is infinite, then we are done since $C_\vdash(\Gamma)=\lang$. On the other hand, if $\Gamma$ is finite, then as $\lang$ is infinite, there exists $\alpha\in\lang\setminus\Gamma$. Let $\Delta=\lang\setminus\{\alpha\}$. Clearly, $\Gamma\subseteq\Delta\subsetneq\lang$ and $\Delta$ is infinite, which implies that $C_\vdash(\Delta)=\lang$. Thus, for every $\Gamma\subsetneq\lang$, there exists $\Delta\subsetneq\lang$ such that $\Gamma\subseteq\Delta$, and $C_\vdash(\Delta)=\lang$, i.e., pfECQ holds in $\mathcal{S}$. Then, as $\lang$ is infinite, and so, has at least two distinct elements, by Theorem \ref{thm:pfECQ->sECQ}, sECQ also holds in $\mathcal{S}$.

    Now, to see that spECQ does not hold in $\mathcal{S}$, let $\Gamma$ be any finite subset of $\lang$. Since $\lang$ is infinite, $\Gamma\subsetneq\lang$. Now, for any $\alpha\in\lang$, $\Gamma\cup\{\alpha\}$ is also finite, and hence, $C_\vdash(\Gamma\cup\{\alpha\})=\Gamma\cup\{\alpha\}\neq\lang$. Thus, spECQ fails in $\mathcal{S}$.

    Finally, for any $\alpha,\beta\in\lang$, since $\{\alpha,\beta\}$ is finite, $C_\vdash(\{\alpha,\beta\})=\{\alpha,\beta\}\subsetneq\lang$. Thus, gECQ cannot hold in $\mathcal{S}$.
\end{exa}

The following example shows that neither gECQ nor sECQ implies pfECQ or spECQ.

\begin{exa}\label{exa:gECQ!->pfECQ1}
    Let $\mathcal{S}=\langle\lang,\vdash\rangle$ be the logic, where $\lang=\NN$, and $\vdash\,\subseteq\pow(\lang)\times\lang$ is such that, for any $\Gamma\subseteq\lang$,
    \[
    C_\vdash(\Gamma)=\left\{\begin{array}{ll}
         \lang&\hbox{if }\Gamma=\{n,n+1\}\hbox{ for some }n\in\NN,\\
         \Gamma&\hbox{otherwise}. 
    \end{array}\right.
    \]
    Let $k\in\lang$. Then, $k+1\in\lang$ and $C_\vdash(\{k,k+1\})=\lang$. Thus, gECQ holds in $\mathcal{S}$. Since gECQ holds in $\mathcal{S}$, and $\lang$ has at least three distinct elements, by Theorem \ref{thm:gECQ->sECQ}, sECQ also holds in $\mathcal{S}$.

    Now, to see that pfECQ does not hold in $\mathcal{S}$, let $\Gamma\subsetneq\lang$ such that $\lvert\Gamma\rvert\ge3$. Then, for any $\Delta\subseteq\lang$ such that $\Gamma\subseteq\Delta$, $C_\vdash(\Delta)=\Delta$. So, for any $\Delta\supseteq\Gamma$, $C_\vdash(\Delta)=\lang$ iff $\Delta=\lang$. Thus, there does not exist any $\Delta\subsetneq\lang$ such that $\Gamma\subseteq\Delta$ and $C_\vdash(\Delta)=\lang$. Hence, pfECQ fails in $\mathcal{S}$.

    By similar arguments as above, we can say that for any $\Gamma\subsetneq\lang$ with $\lvert\Gamma\rvert\ge3$, there does not exist any $\alpha\in\lang$ such that $\Gamma\cup\{\alpha\}\subsetneq\lang$ and $C_\vdash(\Gamma\cup\{\alpha\})=\lang$. Thus, spECQ fails in $\mathcal{S}$.
\end{exa}

\begin{rem}
    The fact that gECQ does not imply spECQ can also be derived from the fact that gECQ does not imply pfECQ (shown in the above example) and Theorem \ref{thm:spECQ->pfECQ}, which states that spECQ implies pfECQ.
\end{rem}

\begin{thm}\label{thm:pfECQ3<->spECQ}
    Suppose $\mathcal{S}=\langle\lang,\vdash\rangle$ is a logic. If pfECQ3 holds in $\mathcal{S}$, then spECQ holds in it. The converse also holds if $\lang$ has at least two distinct elements. Thus, if $\lang$ has at least two distinct elements, pfECQ3 is equivalent to spECQ. 
\end{thm}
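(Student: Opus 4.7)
The plan is to prove both implications by reducing pfECQ3 to singleton $\Delta$'s.

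For the forward direction, suppose pfECQ3 holds and fix $\Gamma \subsetneq \lang$. Apply pfECQ3 to obtain a nonempty $\Delta \subsetneq \lang$ with $\Gamma \cup \Delta \subsetneq \lang$ and $C_\vdash(\Gamma \cup \Delta^\prime) = \lang$ for every nonempty $\Delta^\prime \subseteq \Delta$. Since $\Delta \neq \emptyset$, pick any $\alpha \in \Delta$ and instantiate the universal clause with $\Delta^\prime = \{\alpha\}$; then $\Gamma \cup \{\alpha\} \subseteq \Gamma \cup \Delta \subsetneq \lang$ and $C_\vdash(\Gamma \cup \{\alpha\}) = \lang$, which is exactly the witness required by spECQ.

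For the converse, assume spECQ holds and $\lang$ has at least two distinct elements. Fix $\Gamma \subsetneq \lang$ and apply spECQ to produce $\alpha \in \lang$ with $\Gamma \cup \{\alpha\} \subsetneq \lang$ and $C_\vdash(\Gamma \cup \{\alpha\}) = \lang$. Set $\Delta = \{\alpha\}$; then $\Delta \neq \emptyset$, and $\Delta \subsetneq \lang$ because $\lang$ has at least two elements. The only nonempty subset of $\Delta$ is $\Delta$ itself, so the universal condition in pfECQ3 reduces to the single instance $C_\vdash(\Gamma \cup \Delta) = C_\vdash(\Gamma \cup \{\alpha\}) = \lang$, which holds by choice of $\alpha$.

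Both directions are essentially bookkeeping: the forward direction uses that a singleton is a legitimate nonempty subset of $\Delta$, and the converse uses that a singleton itself can serve as $\Delta$ provided $\lang$ is large enough to keep $\{\alpha\}$ a proper subset. There is no real obstacle; the only subtlety worth flagging is the side condition that $\lang$ has at least two elements, which is genuinely needed in the converse (otherwise $\Delta = \{\alpha\} = \lang$ could violate $\Delta \subsetneq \lang$), but is not needed in the forward direction since $\Delta$ is already supplied as a proper subset by pfECQ3.
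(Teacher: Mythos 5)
Your proof is correct and follows essentially the same route as the paper's: the forward direction instantiates the universal clause of pfECQ3 at a singleton $\Delta^\prime=\{\alpha\}$ with $\alpha\in\Delta$, and the converse takes $\Delta=\{\alpha\}$ and observes that its only non-empty subset is itself, with the two-element hypothesis ensuring $\Delta\subsetneq\lang$. Nothing further is needed.
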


\begin{proof}
    Suppose pfECQ3 holds in $\mathcal{S}$. Let $\Gamma\subsetneq\lang$. Then, by pfECQ3, there exists $\emptyset\neq\Delta\subsetneq\lang$ such that $\Gamma\cup\Delta\subsetneq\lang$, and for every $\emptyset\neq\Delta^\prime\subseteq\Delta$, $C_\vdash(\Gamma\cup\Delta^\prime)=\lang$. Since $\Delta\neq\emptyset$, there exists $\alpha\in\Delta$. Moreover, $\Gamma\cup\{\alpha\}\subseteq\Gamma\cup\Delta\subsetneq\lang$. Finally, since $\emptyset\neq\{\alpha\}\subseteq\Delta$, $C_\vdash(\Gamma\cup\{\alpha\})=\lang$. Thus, for every $\Gamma\subsetneq\lang$, there exists $\alpha\in\lang$ such that $\Gamma\cup\{\alpha\}\subsetneq\lang$, and $C_\vdash(\Gamma\cup\{\alpha\})=\lang$, i.e., spECQ holds in $\mathcal{S}$.

    Conversely, suppose $\lang$ has at least two distinct elements, and spECQ holds in $\mathcal{S}$. Let $\Gamma\subsetneq\lang$. Then, by spECQ, there exists $\alpha\in\lang$ such that $\Gamma\cup\{\alpha\}\subsetneq\lang$ and $C_\vdash(\Gamma\cup\{\alpha\})=\lang$. Now, let $\Delta=\{\alpha\}$. Since $\lang$ has at least two distinct elements, $\emptyset\neq\Delta\subsetneq\lang$. Moreover, $\Gamma\cup\Delta=\Gamma\cup\{\alpha\}\subsetneq\lang$. Finally, since $\Delta$ is the only non-empty subset of itself and $C_\vdash(\Gamma\cup\Delta)=C_\vdash(\Gamma\cup\{\alpha\})=\lang$, we have that for every $\emptyset\neq\Delta^\prime\subseteq\Delta$, $C_\vdash(\Gamma\cup\Delta^\prime)=\lang$. Thus, pfECQ3 holds in $\mathcal{S}$.
\end{proof}

\begin{cor}\label{cor:pfECQ3->pfECQ/sECQ}
    Suppose $\mathcal{S}=\langle\lang,\vdash\rangle$ is a logic. If pfECQ3 holds in $\mathcal{S}$, then pfECQ holds in $\mathcal{S}$. If $\lang$ has at least two distinct elements, then sECQ also holds in $\mathcal{S}$.
\end{cor}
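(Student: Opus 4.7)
The plan is to prove this entirely by chaining the three results already established immediately prior, without redoing any of the set-theoretic bookkeeping. The two claims correspond to two chains of implications that pass through spECQ and pfECQ respectively.

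For the first claim, I would start by invoking the forward direction of Theorem \ref{thm:pfECQ3<->spECQ}, which holds unconditionally (no assumption on the cardinality of $\lang$): if pfECQ3 holds in $\mathcal{S}$, then spECQ holds in $\mathcal{S}$. Then I would apply Theorem \ref{thm:spECQ->pfECQ}, which again has no cardinality hypothesis, to conclude that pfECQ holds in $\mathcal{S}$. This gives the first assertion.

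For the second claim, assuming additionally that $\lang$ has at least two distinct elements, I would extend the chain by one more step using Theorem \ref{thm:pfECQ->sECQ}, which states precisely that pfECQ implies sECQ under the hypothesis that $\lang$ has at least two distinct elements. Composing with the first claim yields sECQ.

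There is essentially no obstacle here; the whole argument is a one-line composition of previously proved implications, and the corollary is really just a bookkeeping statement recording the transitive closure of the implications already obtained in Theorems \ref{thm:pfECQ3<->spECQ}, \ref{thm:spECQ->pfECQ}, and \ref{thm:pfECQ->sECQ}. The only minor thing to watch is that the two-element hypothesis is invoked only at the last step, not for the pfECQ conclusion, which is why the corollary is stated as two separate assertions.
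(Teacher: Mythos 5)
Your proof of the first assertion is exactly the paper's: compose the forward direction of Theorem \ref{thm:pfECQ3<->spECQ} with Theorem \ref{thm:spECQ->pfECQ}. For the second assertion, however, you take a genuinely different (and in fact cleaner) route. The paper derives sECQ directly from spECQ via Theorem \ref{thm:spECQ->sECQ}, whose sECQ clause requires $\lang$ to have at least \emph{three} distinct elements; indeed the paper's own proof of the corollary explicitly invokes the three-element hypothesis, which does not match the two-element hypothesis in the corollary's statement. You instead continue the chain one step further, deriving sECQ from the already-obtained pfECQ via Theorem \ref{thm:pfECQ->sECQ}, which needs only two distinct elements. Your route therefore justifies the corollary exactly as stated, whereas the paper's proof as written silently strengthens the cardinality assumption. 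Both arguments are pure bookkeeping compositions of prior results, but yours is the one that actually delivers the stated two-element version.
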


\begin{proof}
    Suppose pfECQ3 holds in $\mathcal{S}$. Then, by the above theorem, spECQ holds in $\mathcal{S}$. So, by Theorem \ref{thm:spECQ->pfECQ}, pfECQ holds in $\mathcal{S}$. 
    
    Moreover, if $\lang$ has at least three distinct elements, then, by Theorem \ref{thm:spECQ->sECQ}, since spECQ holds in $\mathcal{S}$, sECQ also holds in it.
\end{proof}

\begin{cor}
    We can now draw the following conclusions from the results obtained so far.
    \begin{enumerate}[label=(\roman*)]
        \item sECQ does not imply pfECQ3.
        \item pfECQ does not imply pfECQ3.
    \end{enumerate}
\end{cor}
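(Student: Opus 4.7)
The plan is to derive both non-implications from the results already assembled, using the example-based separations between sECQ, pfECQ and spECQ together with the forward direction of Theorem \ref{thm:pfECQ3<->spECQ}. The key observation is that Theorem \ref{thm:pfECQ3<->spECQ} gives, unconditionally, the implication pfECQ3 $\Rightarrow$ spECQ. Hence any logic in which spECQ fails automatically witnesses the failure of pfECQ3 as well, regardless of what else holds in that logic.

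For part (i), I would invoke the logic $\mathcal{S}$ of Example \ref{exa:sECQ!->spECQ}, where $\lang=\NN\setminus\{0\}$ and only sets of the form $\{n,n+1,\ldots,2n\}$ explode. That example already verifies that sECQ holds in $\mathcal{S}$ while spECQ fails. If sECQ implied pfECQ3, then pfECQ3 would hold in $\mathcal{S}$, and by the forward implication of Theorem \ref{thm:pfECQ3<->spECQ}, spECQ would also hold in $\mathcal{S}$, contradicting the conclusion of Example \ref{exa:sECQ!->spECQ}. Therefore sECQ does not imply pfECQ3.

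For part (ii), I would run the same argument using the logic in Example \ref{exa:pfECQ1!->spECQ}, where $\lang$ is infinite and $\Gamma$ explodes iff $\Gamma$ is infinite. That example establishes that pfECQ holds but spECQ fails. If pfECQ implied pfECQ3, then pfECQ3 would hold in this logic, and again by Theorem \ref{thm:pfECQ3<->spECQ} so would spECQ, contradicting the example. Hence pfECQ does not imply pfECQ3.

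There is essentially no obstacle: both parts are immediate corollaries of the already-proved implication pfECQ3 $\Rightarrow$ spECQ together with the two separating examples. The only thing to be careful about is not to accidentally invoke the converse half of Theorem \ref{thm:pfECQ3<->spECQ} (which requires at least two elements in $\lang$), since we only need the unconditional direction; both Examples \ref{exa:sECQ!->spECQ} and \ref{exa:pfECQ1!->spECQ} use infinite $\lang$, so the two-element assumption would in any case be satisfied, but the argument does not rely on it.
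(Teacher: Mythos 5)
Your argument is correct and is exactly the paper's own proof: both parts are obtained by contraposition from the unconditional implication pfECQ3 $\Rightarrow$ spECQ of Theorem \ref{thm:pfECQ3<->spECQ}, combined with Example \ref{exa:sECQ!->spECQ} for (i) and Example \ref{exa:pfECQ1!->spECQ} for (ii). Your remark about not needing the converse half of that theorem is a small clarifying bonus but does not change the substance.
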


\begin{proof}
    \begin{enumerate}[label=(\roman*)]
        \item Suppose the contrary, i.e., sECQ implies pfECQ3. Now, by Theorem \ref{thm:pfECQ3<->spECQ}, pfECQ3 implies spECQ. So, sECQ implies spECQ. However, by Example \ref{exa:sECQ!->spECQ}, we know that that is not the case. Hence, sECQ cannot imply pfECQ3.

        \item Suppose the contrary, i.e., pfECQ implies pfECQ3. Now, by Theorem \ref{thm:pfECQ3<->spECQ}, pfECQ3 implies spECQ. So, pfECQ implies spECQ. However, by Example \ref{exa:pfECQ1!->spECQ}, we know that that is not the case. Hence, pfECQ does not imply pfECQ3. 
    \end{enumerate}
\end{proof}

We have already seen, via Examples \ref{exa:PWK-NotNFpara}, \ref{exa:sECQ-P1}, and Remarks \ref{rem:GentlyParaconsistent-sECQ}, and \ref{rem:Pm-sECQ}, that neither gECQ nor sECQ implies ECQ. The following example shows that none of the other explosion principles, pfECQ, pfECQ3 or spECQ implies ECQ.

\begin{exa}\label{exa:spECQ!->ECQ}
    Let $\mathcal{S}=\langle\lang,\vdash\rangle$ be the logic PWK (as described in \cite{Bonzio2017}). We know that PWK is paraconsistent, and so ECQ fails in it. As discussed in Example \ref{exa:PWK-NotNFpara}, PWK is not $\bot$-paraconsistent as there is a falsity constant $\bot\in\lang$. We claim that spECQ holds in PWK. Let $\Gamma\subsetneq\lang$. If $C_\vdash(\Gamma)=\lang$, then $\Gamma\neq\emptyset$ since not every formula is a theorem of PWK. So, for any $\alpha\in\Gamma$, $\Gamma\cup\{\alpha\}=\Gamma\subsetneq\lang$ and $C_\vdash(\Gamma\cup\{\alpha\})=C_\vdash(\Gamma)=\lang$. On the other hand, if $C_\vdash(\Gamma)\neq\lang$, then $C_\vdash(\Gamma\cup\{\bot\})=\lang$. Now, suppose $\Gamma\cup\{\bot\}=\lang$, then $\Gamma=\lang\setminus\{\bot\}$, which implies that $\bot\land\bot\in\Gamma$. However, as $C_\vdash(\bot\land\bot)=\lang$, by monotonicity, $C_\vdash(\Gamma)=\lang$, a contradiction to our previous assumption. Thus, $\Gamma\cup\{\bot\}\subsetneq\lang$. Hence, we see that for any $\Gamma\subsetneq\lang$, there exists $\alpha\in\lang$ such that $\Gamma\cup\{\alpha\}\subsetneq\lang$ and $C_\vdash(\Gamma\cup\{\alpha\})=\lang$, i.e., spECQ holds in PWK.

    Since spECQ holds in PWK, pfECQ holds in it, by Theorem \ref{thm:spECQ->pfECQ}. Moreover, since $\lvert\lang\rvert\ge2$, pfECQ3 also holds in it, by Theorem \ref{thm:pfECQ3<->spECQ}. 
\end{exa}

\begin{exa}\label{exa:spECQ/pfECQ-P1}
    The logic $P^1$, discussed in Example \ref{exa:sECQ-P1}, is also an example of a logic where spECQ holds. This can be shown as follows. Let $P^1$ $=\langle\lang,\vdash\rangle$, as before, and $\Gamma\subsetneq\lang$. Now, as argued in Example \ref{exa:sECQ-P1}, $C_\vdash(\neg(\alpha\limp\alpha))=\lang$. Since $P^1$ is monotonic, $C_\vdash(\Gamma\cup\{\neg(\alpha\limp\alpha)\})=\lang$. Now, if $\Gamma\cup\{\neg(\alpha\limp\alpha)\}\subsetneq\lang$, then we are done. If not, then $\Gamma=\lang\setminus\{\neg(\alpha\limp\alpha)\}$, and hence, $\neg((\alpha\limp\alpha)\limp(\alpha\limp\alpha))\in\Gamma$. Now, $C_\vdash(\neg((\alpha\limp\alpha)\limp(\alpha\limp\alpha)))=\lang$ as well, and so, by monotonicity, $C_\vdash(\Gamma\cup\{\neg((\alpha\limp\alpha)\limp(\alpha\limp\alpha))\})=\lang$ and $\Gamma\cup\{\neg((\alpha\limp\alpha)\limp(\alpha\limp\alpha))\}=\Gamma\subsetneq\lang$. Thus, spECQ holds in $P^1$.

    Therefore, by Theorem \ref{thm:spECQ->pfECQ}, pfECQ also holds in $P^1$. Hence the logic $P^1$ is another example of a logic, where spECQ and pfECQ hold, but ECQ fails.
\end{exa}

\begin{rem}
    It can be shown using the same arguments, as in the above example, that spECQ, and hence, pfECQ, hold in all the gently paraconsistent logics from \cite{Ciucura2020-1} (discussed in Remark \ref{rem:GentlyParaconsistent-sECQ}) and the paraconsistent logics $P^m$ and $P^{*m}$, $m\ge1$, from \cite{Ciucura2020} (mentioned in Remark \ref{rem:Pm-sECQ}). This is because all of these logics are monotonic and contain the positive fragment of classical propositional logic. So, for any $\alpha$, as $\alpha\limp\alpha$ is a theorem in each of them, $C_\vdash(\neg(\alpha\limp\alpha))=\lang$.
\end{rem}

It has been established now that ECQ is not implied by any of the other principles of explosion. However, ECQ implies sECQ and gECQ. We argue below that ECQ does not imply the two remaining explosion principles, viz., spECQ and pfECQ.

\begin{thm}\label{thm:spECQ->F}
   Suppose $\mathcal{S}=\langle\lang,\vdash\rangle$ is a logic. If spECQ holds in $\mathcal{S}$, then $\lang$ has at least two distinct elements, and there exists $\alpha\in\lang$ such that $C_\vdash(\alpha)=\lang$. 
\end{thm}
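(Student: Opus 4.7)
The proof should be very short: apply spECQ to the smallest possible set, namely $\Gamma = \emptyset$, and read off both conclusions from the witness that spECQ produces.

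More precisely, my plan is the following. Assuming $\lang$ is non-empty (as is implicit in the paper's notion of a logic), we have $\emptyset \subsetneq \lang$. Hence spECQ applies to $\Gamma = \emptyset$, yielding some $\alpha \in \lang$ such that $\emptyset \cup \{\alpha\} = \{\alpha\} \subsetneq \lang$ and $C_\vdash(\emptyset \cup \{\alpha\}) = C_\vdash(\alpha) = \lang$. The strict inclusion $\{\alpha\} \subsetneq \lang$ forces the existence of some $\beta \in \lang$ distinct from $\alpha$, giving $|\lang| \geq 2$. The equation $C_\vdash(\alpha) = \lang$ is exactly the second part of the conclusion.

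There is essentially no obstacle here; the only subtlety is the degenerate case $\lang = \emptyset$, in which spECQ holds vacuously but the conclusion fails. This is sidestepped by the tacit convention that the underlying set of a logic is non-empty, which I will invoke at the start.
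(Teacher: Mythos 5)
Your proposal is correct and is exactly the paper's own argument: the paper also applies spECQ to $\Gamma=\emptyset$ (noting $\emptyset\subsetneq\lang$) and reads off both conclusions from the witness $\alpha$ with $\{\alpha\}\subsetneq\lang$ and $C_\vdash(\alpha)=\lang$. Your explicit remark about the degenerate case $\lang=\emptyset$ is a minor clarification the paper leaves tacit.
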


\begin{proof}
    Suppose spECQ holds in $\mathcal{S}$. Now, $\emptyset\subsetneq\lang$. Then, by spECQ, there exists $\alpha\in\lang$ such that $\emptyset\cup\{\alpha\}=\{\alpha\}\subsetneq\lang$, which implies that $\lang$ has at least two distinct elements, and $C_\vdash(\alpha)=\lang$.
\end{proof}

\begin{rem}
    We can conclude from the above theorem that if $\mathcal{S}=\langle\lang,\vdash\rangle$ is a logic such that $\neg$-ECQ holds, where $\neg$ is a unary operator in $\mathcal{S}$, i.e., $\mathcal{S}$ is not paraconsistent with respect to $\neg$, but there does not exist $\alpha\in\lang$ such that $C_\vdash(\alpha)=\lang$, which, in particular, implies that there is no falsity constant, then spECQ fails in $\mathcal{S}$. We construct such a logic below.
\end{rem}

\begin{exa}\label{exa:ECQ!->spECQ/pfECQ}
    Let $\mathcal{S}=\langle\lang,\vdash\rangle$ be a logic, where $\lang=\ZZ$, and $\vdash\,\subseteq\pow(\lang)\times\lang$ is such that, for any $\Gamma\subseteq\lang$,
    \[
    C_\vdash(\Gamma)=\left\{\begin{array}{ll}
         \lang&\hbox{if }\Gamma\neq\emptyset\hbox{ and for each }n\in\Gamma,-n\in\Gamma,\\
         \Gamma&\hbox{otherwise}.
    \end{array}\right.
    \]
    Then, with respect to the unary operator $f:\ZZ\to\ZZ$ defined by $f(n)=-n$ for each $n\in\ZZ$, we see that ECQ holds, i.e., $f$-ECQ holds, since for each $n\in\ZZ$, $C_\vdash(\{n,-n\})=\lang$. 
    
    To see that spECQ does not hold in $\mathcal{S}$, let $\Gamma=\{n\in\ZZ\mid\,n\ge0\}\subsetneq\lang$. Clearly, for any $n\in\lang$, $\Gamma\cup\{n\}\subsetneq\lang$ but $C_\vdash(\Gamma\cup\{n\})\neq\lang$. 

    In fact, the only subset $\Delta\subseteq\lang$ such that $\Gamma\subseteq\Delta$ and $C_\vdash(\Delta)=\lang$ is $\ZZ$. Hence, there is no $\Delta\subsetneq\lang$ such that $\Gamma\subseteq\Delta$ and $C_\vdash(\Delta)=\lang$. So, pfECQ also fails in $\mathcal{S}$.
\end{exa}

Thus, ECQ does not imply spECQ or pfECQ. The following theorem, however, shows that, under some conditions, ECQ does imply spECQ and pfECQ.

\begin{thm}\label{thm:ECQ!->spECQ/pfECQ}
    Suppose $\mathcal{S}=\langle\lang,\vdash\rangle$ is a monotonic logic, where $\lang$ is the formula algebra over a denumerable set of variables $V$, using a signature containing a unary (negation) operator $\neg$. Moreover, suppose there exists $\alpha\in\lang$ such that $C_\vdash(\alpha)=\lang$. If $\neg$-ECQ holds in $\mathcal{S}$, then spECQ and pfECQ also hold in it.
\end{thm}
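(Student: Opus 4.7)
The plan is to establish spECQ first, and then invoke Theorem~\ref{thm:spECQ->pfECQ} to get pfECQ for free. Fix some $\alpha_0\in\lang$ with $C_\vdash(\alpha_0)=\lang$ (guaranteed by hypothesis), and let $\Gamma\subsetneq\lang$. My default witness will be $\beta=\alpha_0$: by monotonicity, $C_\vdash(\Gamma\cup\{\alpha_0\})\supseteq C_\vdash(\{\alpha_0\})=\lang$, so the content requirement is automatic; only the properness requirement $\Gamma\cup\{\beta\}\subsetneq\lang$ needs attention.

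This default witness succeeds immediately in two subcases: (a) $\alpha_0\in\Gamma$, in which case $\Gamma\cup\{\alpha_0\}=\Gamma\subsetneq\lang$; and (b) $\alpha_0\notin\Gamma$ but $\Gamma\cup\{\alpha_0\}\subsetneq\lang$. The only stubborn case is the edge case $\Gamma=\lang\setminus\{\alpha_0\}$, where $\alpha_0$ is the unique element of $\lang$ missing from $\Gamma$, so adding $\alpha_0$ restores all of $\lang$. In this case I must instead pick a $\beta\in\Gamma$; since then $\Gamma\cup\{\beta\}=\Gamma\subsetneq\lang$ automatically, it suffices to show that $C_\vdash(\Gamma)=\lang$ already. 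This is the one step where the full strength of the hypotheses on $\lang$ (formula algebra over a denumerable $V$) and on $\neg$ (giving $\neg$-ECQ) is essential.

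To show $C_\vdash(\Gamma)=\lang$ in that edge case, I would produce a single variable $p\in V$ with both $p\neq\alpha_0$ and $\neg p\neq\alpha_0$; then $\{p,\neg p\}\subseteq\Gamma$, and $\neg$-ECQ together with monotonicity yields $C_\vdash(\Gamma)\supseteq C_\vdash(\{p,\neg p\})=\lang$. The existence of such a $p$ follows from the denumerability of $V$ and the unique-readability property of the absolutely free formula algebra: at most one variable can equal $\alpha_0$, and, by unique readability, at most one variable $q$ can satisfy $\neg q=\alpha_0$ (namely the unique immediate subformula of $\alpha_0$ when $\alpha_0$ has $\neg$ as its principal connective, and none otherwise). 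Thus the exceptional set of variables is finite, while $V$ is infinite, so a suitable $p$ exists, and $\beta=p$ finishes this case.

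The main obstacle is exactly that edge case $\Gamma=\lang\setminus\{\alpha_0\}$, and the point of isolating it is to make clear why each of the hypotheses (monotonicity, the presence of an exploding singleton, $\neg$-ECQ, and the denumerable-variable formula-algebra structure) is genuinely used. Once spECQ is proved, pfECQ follows immediately by Theorem~\ref{thm:spECQ->pfECQ}, with no further argument required.
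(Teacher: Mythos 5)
Your proposal is correct and follows essentially the same route as the paper: use $\alpha_0$ as the default witness via monotonicity, isolate the edge case $\Gamma=\lang\setminus\{\alpha_0\}$, and there find a variable $p$ with $p,\neg p\in\Gamma$ so that $\neg$-ECQ and monotonicity give $C_\vdash(\Gamma)=\lang$, then invoke Theorem~\ref{thm:spECQ->pfECQ}. The only cosmetic difference is that the paper secures $p,\neg p\neq\alpha_0$ by choosing $p$ not occurring in $\alpha_0$, whereas you count the finitely many excluded variables via unique readability; both work.
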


\begin{proof}
    Suppose $\neg$-ECQ holds in $\mathcal{S}$. Let $\Gamma\subsetneq\lang$. To show that spECQ holds, we need to find a $\varphi\in\lang$ such that $\Gamma\cup\{\varphi\}\subsetneq\lang$ and $C_\vdash(\Gamma\cup\{\varphi\})=\lang$. Now, by monotonicity, $C_\vdash(\Gamma\cup\{\alpha\})=\lang$. If $\Gamma\cup\{\alpha\}\subsetneq\lang$, then we are done. If not, then $\Gamma=\lang\setminus\{\alpha\}$. So, $\Gamma\neq\emptyset$ since $\lang$ is denumerable. Now, let $p\in V$ such that $p$ does not occur in $\alpha$. Then $p,\neg p\neq\alpha$, and hence, $p,\neg p\in\Gamma$. Now, since $\neg$-ECQ holds in $\mathcal{S}$, $C_\vdash(\{p,\neg p\})=\lang$. Thus, $\Gamma\cup\{p\}=\Gamma\subsetneq\lang$, and, by monotonicity, $C_\vdash(\Gamma\cup\{p\})=C_\vdash(\Gamma)=\lang$. So, spECQ holds in $\mathcal{S}$. Then, by Theorem \ref{thm:spECQ->pfECQ}, pfECQ also holds in $\mathcal{S}$.
\end{proof}

The connections between the different explosion principles discussed so far can now be summarized in the following figure. The arrows in the figure indicate implications, while the absence of an arrow from one to another principle of explosion indicates a non-implication. Some implications hold under mild conditions, such as that $\lang$ must have at least two or three elements. We have not shown these conditions in the picture to preserve clarity.

\begin{figure}[ht]
    \centering
    \[\begin{tikzcd}
	&& {\mathrm{sECQ}^\prime} \\
	&& {\mathrm{sECQ}} & {\mathrm{ECQ}} \\
	{\mathrm{pfECQ3}} & {\mathrm{spECQ}} && {\mathrm{gECQ}} \\
	&& {\begin{array}{c}\mathrm{pfECQ}\\(\mathrm{pfECQ1}\longleftrightarrow\mathrm{pfECQ2})\end{array}}
	\arrow[from=1-3, to=2-3]
	\arrow[from=2-3, to=1-3]
	\arrow[from=3-2, to=2-3]
	\arrow[from=2-4, to=2-3]
	\arrow[from=3-4, to=2-3]
	\arrow[from=3-2, to=3-4]
	\arrow[from=4-3, to=2-3]
	\arrow[from=2-4, to=3-4]
    \arrow[from=3-2, to=4-3]
	\arrow[from=3-1, to=3-2]
	\arrow[from=3-2, to=3-1]
\end{tikzcd}\]
    \caption{Principles of Explosion}
    \label{fig:Explosion}
\end{figure}
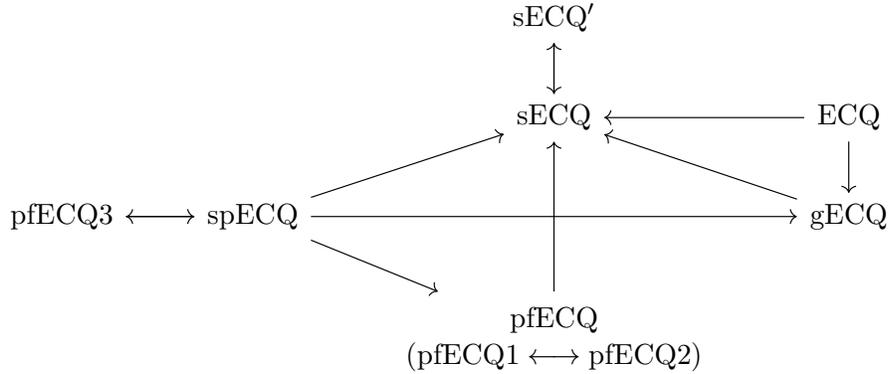

We next explore the connections between the various principles of explosion discussed in this section with the property of \emph{finite trivializability}. 

\begin{dfn}\label{dfn:fintriv}
    Suppose $\mathcal{S}=\langle\lang,\vdash\rangle$ is a logic. $\mathcal{S}$ is said to be \emph{finitely trivializable} if there exists a finite $\Gamma\subseteq\lang$ such that $C_\vdash(\Gamma)=\lang$.
\end{dfn}

\begin{rem}\label{rem:fintriv}
    It is clear that if $\lang$ is finite, then finite trivializability is equivalent to trivializability. Thus, it makes sense to investigate this property in the case of logics, where $\lang$ is infinite. Hence, $\lang$ will be assumed to be infinite in all the following results concerning finite trivializability.
\end{rem}

It is fairly easy to see that finite trivializability does not necessarily imply any of the principles of explosion: ECQ, gECQ, sECQ, spECQ, pfECQ. The following results expound the conditions under which finite trivializability does imply gECQ, sECQ, spECQ, and pfECQ.

\begin{thm}\label{thm:fintriv-sECQ}
    Suppose $\mathcal{S}=\langle\lang,\vdash\rangle$ is a monotonic logic. If $\mathcal{S}$ is finitely trivializable, then sECQ holds in $\mathcal{S}$.
\end{thm}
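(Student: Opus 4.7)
The plan is to use the witnessing finite trivializing set as a starting point and then augment it by the given element $\alpha$, using monotonicity to preserve triviality and the infiniteness of $\lang$ (from Remark \ref{rem:fintriv}) to keep the resulting set a proper subset.

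In more detail, first I would fix a finite $\Gamma_0 \subseteq \lang$ with $C_\vdash(\Gamma_0) = \lang$, which exists by finite trivializability. Then, given an arbitrary $\alpha \in \lang$, I would set $\Gamma = \Gamma_0 \cup \{\alpha\}$. The set $\Gamma$ is finite (as a union of two finite sets), and since $\lang$ is assumed to be infinite, $\Gamma \subsetneq \lang$. Trivially $\alpha \in \Gamma$, and by monotonicity, $\Gamma_0 \subseteq \Gamma$ implies $\lang = C_\vdash(\Gamma_0) \subseteq C_\vdash(\Gamma) \subseteq \lang$, so $C_\vdash(\Gamma) = \lang$. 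This exhibits, for each $\alpha \in \lang$, the set $\Gamma$ witnessing sECQ.

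There is essentially no obstacle: the only subtle point is the need to know $\lang$ is infinite, which is supplied by Remark \ref{rem:fintriv} (otherwise finite trivializability collapses to trivializability, and then $\lang$ itself would be the only trivial set, making sECQ vacuously fail when $\alpha \in \lang$ forces $\Gamma = \lang$). The invocation of monotonicity is the essential logical ingredient, and the proof is a direct one-line application once $\Gamma_0$ is in hand.
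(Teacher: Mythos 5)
Your proof is correct and is essentially identical to the paper's: both take the finite trivializing set, adjoin $\alpha$, use monotonicity to get triviality, and use the standing assumption (Remark \ref{rem:fintriv}) that $\lang$ is infinite to conclude the augmented set is a proper subset. Nothing further to add.
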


\begin{proof}
    Suppose $\mathcal{S}$ is finitely trivializable. Then, there exists a finite $\Gamma\subseteq\lang$ such that $C_\vdash(\Gamma)=\lang$. Let $\alpha\in\lang$. Then $\Gamma\cup\{\alpha\}$ is a finite subset of $\lang$, and hence $\Gamma\cup\{\alpha\}\subsetneq\lang$ since $\lang$ is infinite. Clearly, $\alpha\in\Gamma\cup\{\alpha\}$, and, by monotonicity, $C_\vdash(\Gamma\cup\{\alpha\})=\lang$. Thus, sECQ holds in $\mathcal{S}$.
\end{proof}

\begin{lem}\label{lem:fintriv-F}
    Suppose $\mathcal{S}=\langle\lang,\vdash\rangle$ is a transitive logic, where $\lang$ is the formula algebra over a set of variables $V\neq\emptyset$, using a finite signature containing a conjunction operator $\land$. Moreover, suppose the following elimination rule for $\land$ holds in $\mathcal{S}$. 
    \[
    \alpha\land\beta\vdash\alpha\;\hbox{ and }\;\alpha\land\beta\vdash\beta\quad(\land E).
    \]
    Then, $\mathcal{S}$ is finitely trivializable  iff there exists $\varphi\in\lang$ such that $C_\vdash(\varphi)=\lang$.
\end{lem}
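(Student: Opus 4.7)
The reverse direction is immediate: if $\varphi\in\lang$ satisfies $C_\vdash(\varphi)=\lang$, then $\{\varphi\}$ is a finite subset of $\lang$ that trivializes $\mathcal{S}$.

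For the forward direction, my plan is to take a finite trivializing set $\Gamma=\{\alpha_1,\ldots,\alpha_n\}\subseteq\lang$ and conjoin its elements into a single formula. If $\Gamma=\emptyset$, then since $V\neq\emptyset$, pick any $\varphi\in\lang$; trivially $\emptyset\subseteq C_\vdash(\{\varphi\})$, so by transitivity $\lang = C_\vdash(\emptyset)\subseteq C_\vdash(\{\varphi\})$, giving $C_\vdash(\varphi)=\lang$. Otherwise, set $\varphi := (\cdots((\alpha_1\land\alpha_2)\land\alpha_3)\cdots\land\alpha_n)$ (any fixed bracketing works since $\land$ is in the finite signature, so $\varphi\in\lang$).

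The key step is then to show that $\varphi\vdash\alpha_i$ for each $1\le i\le n$. This is done by induction on the build-up of $\varphi$ using $(\land E)$ and transitivity. Write $\varphi_k := (\cdots((\alpha_1\land\alpha_2)\land\alpha_3)\cdots\land\alpha_k)$ for $2\le k\le n$, so $\varphi=\varphi_n$. By $(\land E)$, $\varphi_k\vdash\varphi_{k-1}$ and $\varphi_k\vdash\alpha_k$ for each $k$. Iterating downwards via (pointwise) transitivity, we get $\varphi\vdash\varphi_k$ for all $k\le n$, and then another application of transitivity with $\varphi_k\vdash\alpha_k$ (and $\varphi_2\vdash\alpha_1$) yields $\varphi\vdash\alpha_i$ for every $i$.

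Finally, this means $\{\alpha_1,\ldots,\alpha_n\}\subseteq C_\vdash(\{\varphi\})$. By the transitivity of $\vdash$ (the Tarskian form), $C_\vdash(\{\alpha_1,\ldots,\alpha_n\})\subseteq C_\vdash(\{\varphi\})$, and since the left-hand side equals $\lang$, we conclude $C_\vdash(\varphi)=\lang$, completing the proof. The only potentially delicate point is the bookkeeping in the induction extracting each $\alpha_i$ from the iterated conjunction, but this is straightforward given that $(\land E)$ is assumed in both component forms; the finiteness of the signature and the assumption $V\neq\emptyset$ only serve to ensure that $\lang$ is well-defined and non-empty.
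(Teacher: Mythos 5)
Your proof is correct and follows essentially the same route as the paper's: form the conjunction of a finite trivializing set, use $(\land E)$ plus (pointwise) transitivity to recover each conjunct, and then apply transitivity to the trivializing set. The only difference is that you spell out the iterated-conjunction induction and handle the $\Gamma=\emptyset$ case explicitly, both of which the paper leaves implicit.
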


\begin{proof}
    Suppose $\mathcal{S}$ is finitely trivializable. Let $\Gamma$ be a finite subset of $\lang$ such that $C_\vdash(\Gamma)=\lang$, and $\varphi=\displaystyle\bigwedge_{\beta\in\Gamma}\beta$ (the conjunction of all the elements in the finite set $\Gamma$). Now, by $\land E$, $\varphi\vdash\beta$ for all $\beta\in\Gamma$. Since $C_\vdash(\Gamma)=\lang$, by transitivity, $C_\vdash(\varphi)=\lang$. The converse clearly holds since $\{\varphi\}$ is a finite set.
\end{proof}

\begin{thm}\label{thm:fintriv-spECQ}
    Suppose $\mathcal{S}=\langle\lang,\vdash\rangle$ is a monotonic and transitive logic, where $\lang$ is as in the above lemma. Moreover, suppose the elimination rule $(\land E)$, as described in the above lemma, holds in $\mathcal{S}$. If $\mathcal{S}$ is finitely trivializable, then spECQ, pfECQ, and gECQ hold in $\mathcal{S}$.
\end{thm}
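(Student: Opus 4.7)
The plan is to chain together Lemma \ref{lem:fintriv-F}, which gives a single trivializing element, with Theorem \ref{thm:spECQ->pfECQ}, which gets pfECQ from spECQ. So first I would apply Lemma \ref{lem:fintriv-F} to obtain some $\varphi\in\lang$ with $C_\vdash(\varphi)=\lang$, and then derive each of gECQ, spECQ, pfECQ in turn.

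For gECQ, the argument is essentially trivial: given any $\alpha\in\lang$, take $\beta=\varphi$; monotonicity gives $C_\vdash(\{\alpha,\varphi\})\supseteq C_\vdash(\varphi)=\lang$. For spECQ, given $\Gamma\subsetneq\lang$, the natural attempt is to take the $\alpha$ witnessing spECQ to be $\varphi$ itself. Then $C_\vdash(\Gamma\cup\{\varphi\})=\lang$ by monotonicity, so the only issue is ensuring $\Gamma\cup\{\varphi\}\subsetneq\lang$. If this fails, then $\Gamma=\lang\setminus\{\varphi\}$, which is the key edge case. Finally, pfECQ is immediate from spECQ via Theorem \ref{thm:spECQ->pfECQ}.

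To handle the edge case, I exploit the presence of $\land$ in the signature. Since $\lang$ is the formula algebra, $\varphi\land\varphi$ is a distinct element of $\lang$, so $\varphi\land\varphi\in\lang\setminus\{\varphi\}=\Gamma$. By the $(\land E)$ rule, $\varphi\land\varphi\vdash\varphi$, and since $C_\vdash(\varphi)=\lang$, transitivity yields $C_\vdash(\varphi\land\varphi)=\lang$. Monotonicity then gives $C_\vdash(\Gamma)\supseteq C_\vdash(\varphi\land\varphi)=\lang$, hence $C_\vdash(\Gamma)=\lang$. Now pick $\alpha=\varphi\land\varphi$ (which already lies in $\Gamma$), so $\Gamma\cup\{\alpha\}=\Gamma\subsetneq\lang$ and $C_\vdash(\Gamma\cup\{\alpha\})=\lang$, establishing spECQ.

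The main obstacle is purely the bookkeeping in the edge case $\Gamma=\lang\setminus\{\varphi\}$; one needs a \emph{second} trivializing element distinct from $\varphi$ in order to keep $\Gamma\cup\{\alpha\}$ a proper subset of $\lang$. This is exactly where the combination of the conjunction connective and the elimination rule $(\land E)$ is used — it manufactures $\varphi\land\varphi$ as a syntactically distinct trivializer. (Nothing in the argument actually requires the signature to be finite, so the hypothesis is slightly stronger than what is needed; only the existence of $\land$ satisfying $(\land E)$ and the fact that $\lang$ is infinite, per Remark \ref{rem:fintriv}, really get used.)
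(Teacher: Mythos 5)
Your proof is correct and follows essentially the same route as the paper: obtain $\varphi$ with $C_\vdash(\varphi)=\lang$ from Lemma \ref{lem:fintriv-F}, split on whether $\Gamma\cup\{\varphi\}\subsetneq\lang$, and handle the edge case $\Gamma=\lang\setminus\{\varphi\}$ by using $\varphi\land\varphi$ together with $(\land E)$, monotonicity, and transitivity. The only (immaterial) difference is that you derive gECQ directly from monotonicity and $C_\vdash(\varphi)=\lang$, whereas the paper routes it through Theorem \ref{thm:spECQ->sECQ}.
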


\begin{proof}
    Suppose $\mathcal{S}$ is finitely trivializable. So, by Lemma \ref{lem:fintriv-F}, there exists $\varphi\in\lang$ such that $C_\vdash(\varphi)=\lang$. Let $\Gamma\subsetneq\lang$. 
    
    \textsc{Case 1:} $\Gamma\cup\{\varphi\}\subsetneq\lang$. By monotonicity, $C_\vdash(\Gamma\cup\{\varphi\})=\lang$.

    \textsc{Case 2:} $\Gamma\cup\{\varphi\}=\lang$. So, $\Gamma=\lang\setminus\{\varphi\}$, which implies that $\varphi\land\varphi\in\Gamma$ since $\varphi\neq\varphi\land\varphi$. Now, by $(\land E)$, $\varphi\land\varphi\vdash\varphi$. So, by monotonicity, $\Gamma\cup\{\varphi\land\varphi\}\vdash\varphi$ (i.e., $\Gamma\vdash\varphi$). Then, as $C_\vdash(\varphi)=\lang$, by transitivity, $C_\vdash(\Gamma\cup\{\varphi\land\varphi\})=C_\vdash(\Gamma)=\lang$.

    Thus, spECQ holds in $\mathcal{S}$. So, by Theorem \ref{thm:spECQ->pfECQ}, pfECQ also holds in $\mathcal{S}$. Since $\lang$ has at least two distinct elements (in fact, $\lang$ is denumerable as it is generated inductively over a non-empty set), by Theorem \ref{thm:spECQ->sECQ}, gECQ also holds in $\mathcal{S}$.
\end{proof}

\begin{rem}\label{rem:non-fintriv}
    Suppose $\mathcal{S}=\langle\lang,\vdash\rangle$ such that $\lang$ is infinite. It is easy to see that if ECQ or gECQ holds in $\mathcal{S}$, then it is finitely trivializable. Moreover, by Theorem \ref{thm:spECQ->F}, if spECQ holds in $\mathcal{S}$, then there exists $\varphi\in\lang$ such that $C_\vdash(\varphi)=\lang$, which implies that $\mathcal{S}$ is finitely trivializable. In other words, if $\mathcal{S}$ is not finitely trivializable, then none of the explosion principles ECQ, gECQ, or spECQ holds in it. The 3-valued logic $LP$ (\emph{Logic of Paradox}), introduced in \cite{Asenjo1966} and investigated in \cite{Priest1979}, is one such logic. An interesting feature of $LP$ is that it does not have a detachable implication. The logic $Pac$, obtained by adding a detachable implication to $LP$, is also not finitely trivializable. This is also a 3-valued paraconsistent logic that first appeared with this name in \cite{Avron1991}, as $RM_3^\supset$ in \cite{Avron1986}, and as $PI^s$ in \cite{Batens1980}. The logic $PI$, introduced in \cite{Batens1980}, is also not finitely trivializable. (See also the discussion on $Pac$ and $PI$ in \cite{Carnielli2007}.)
    
    It may, however, be noted that sECQ and pfECQ may or may not hold in logics that are not finitely trivializable. Clearly, if there does not exist any $\Gamma\subsetneq\lang$ such that $C_\vdash(\Gamma)=\lang$, then sECQ and pfECQ also fail. The following example shows that, under some conditions, pfECQ and sECQ might hold in logics that are not finitely trivializable, provided there is an infinite proper subset of $\lang$ that explodes.
\end{rem}

\begin{exa}\label{exa:nonfintriv-sECQ/pfECQ}
    Suppose $\mathcal{S}=\langle\lang,\vdash\rangle$ is a logic, where $\lang$ is infinite, and $\vdash\,\subseteq\pow(\lang)\times\lang$ is such that, for any $\Gamma\subseteq\lang$,
    \[
    C_\vdash(\Gamma)=\left\{\begin{array}{ll}
         \Gamma&\hbox{if }\Gamma\hbox{ is finite},  \\
         \lang&\hbox{otherwise}. 
    \end{array}\right.
    \]
    Clearly, $\mathcal{S}$ is not finitely trivializable. Let $\Gamma\subsetneq\lang$. If $\Gamma$ is infinite, then $C_\vdash(\Gamma)=\lang$. On the other hand, if $\Gamma$ is finite, let $\alpha\in\lang\setminus\Gamma$ (such an $\alpha$ exists since $\Gamma$ is finite, while $\lang$ is infinite). Now, let $\Delta=\lang\setminus\{\alpha\}$. Then, $\Gamma\subseteq\Delta\subsetneq\lang$, and since $\Delta$ is infinite, $C_\vdash(\Delta)=\lang$. Thus, pfECQ holds in $\mathcal{S}$, and so, by Theorem \ref{thm:pfECQ->sECQ}, sECQ also holds in $\mathcal{S}$.
\end{exa}

We end the section with a class of logics that might serve as important examples to motivate one to consider the above principles of explosion. These are the \emph{Logics of Formal Inconsistency (LFIs)} \cite{CarnielliConiglio2016, Carnielli2007}. For these logics, we assume that $\lang$ is the formula algebra over a denumerable set of variables $V$, using a finite signature containing a unary (negation) operator $\neg$. These logics are also assumed to be Tarskian, structural, and finitary, i.e., for any $\Gamma\cup\{\alpha\}\subseteq\lang$, if $\Gamma\vdash\alpha$, then there exists a finite $\Gamma^\prime\subseteq\Gamma$ such that $\Gamma^\prime\vdash\alpha$. A Tarskian, structural, finitary logic is sometimes referred to as a \emph{standard} logic. An LFI is a paraconsistent logic, i.e., ECQ fails in it, where a local or controlled explosion is allowed. It is important to note that not all paraconsistent logics are LFIs, and examples of such logics can be found in the works cited at the beginning of this paragraph. The following definition of an LFI is adapted from \cite{CarnielliConiglio2016}.

\begin{dfn}\label{dfn:LFI}
    Suppose $\mathcal{S}=\langle\lang,\vdash\rangle$ is a standard logic. Let $p\in V$ and $\bigcirc(p)$ be a non-empty set of formulas depending exactly on $p$. Moreover, for any $\varphi\in\lang$, let $\bigcirc(\varphi)=\{\psi(\varphi)\mid\,\psi(p)\in\bigcirc(p)\}$. $\mathcal{S}$ is a \emph{Logic of Formal Inconsistency (LFI)} (with respect to $\neg$ and $\bigcirc(p)$) if the following conditions hold.
    \begin{enumerate}[label=(\roman*)]
        \item There exists $\varphi\in\lang$ such that $C_\vdash(\{\varphi,\neg\varphi\})\neq\lang$, i.e., $\neg$-ECQ fails in $\mathcal{S}$, or equivalently, $\mathcal{S}$ is paraconsistent with respect to $\neg$.
        \item There exists $\alpha,\beta\in\lang$ such that $\beta\notin C_\vdash(\bigcirc(\alpha)\cup\{\alpha\})\cup C_\vdash(\bigcirc(\alpha)\cup\{\neg\alpha\})$, which implies that $C_\vdash(\bigcirc(\alpha)\cup\{\alpha\})\neq\lang$ and $C_\vdash(\bigcirc(\alpha)\cup\{\neg\alpha\})\neq\lang$.
        \item $C_\vdash(\bigcirc(\varphi)\cup\{\varphi,\neg\varphi\})=\lang$ for all $\varphi\in\lang$.
    \end{enumerate}
\end{dfn}

There are two other ways of introducing LFIs. These are called \emph{weak} and \emph{strong} LFIs. It follows easily from the definitions of LFIs, weak LFIs, and strong LFIs that every strong LFI is an LFI, and every LFI is a weak LFI. The converses are not true. A detailed discussion on these can be found in \cite{CarnielliConiglio2016, Carnielli2007}.

Given an $\alpha\in\lang$, the set $\bigcirc(\alpha)$ is intended to express, in a sense, the consistency of $\alpha$ relative to the logic $\mathcal{S}$. When this set is a singleton, its element is denoted by $\circ\alpha$, and in this case, $\circ$ defines a \emph{consistency connective} or \emph{consistency operator}. The consistency connective may or may not be a primitive connective of the logic. 

Clause (iii) in the above definition is called the \emph{gentle principle of explosion} (not to be confused with the principle of gentle explosion discussed in Remark \ref{rem:GentlyParaconsistent-sECQ}), and a logic satisfying this clause is called \emph{gently explosive} (with respect to the respective $\bigcirc(p)$). If a logic is gently explosive with respect to some finite set $\bigcirc(p)$, it is called \emph{finitely gently explosive}. 

\begin{lem}\label{lem:fingentexpl->fintriv}
    Suppose $\mathcal{S}=\langle\lang,\vdash\rangle$ is an LFI with respect to $\neg$ and $\bigcirc(p)$. If $\mathcal{S}$ is finitely gently explosive, i.e., $\bigcirc(p)$ is finite, then it is finitely trivializable. Moreover, suppose the signature of $\mathcal{S}$ has a conjunction operator $\land$ for which the elimination rule $(\land E)$, as described in Lemma \ref{lem:fintriv-F}, holds\footnote{A logic with such a conjunction is called \emph{left-adjunctive} in \cite{Carnielli2007}.}. Then there exists $\varphi\in\lang$ such that $C_\vdash(\varphi)=\lang$\footnote{Such an element $\varphi$ is called a \emph{bottom particle} in \cite{Carnielli2007}, and a \emph{bottom/bottom formula} in \cite{CarnielliConiglio2016}.}.
\end{lem}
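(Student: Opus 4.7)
The proof proposal is essentially a direct application of the LFI axioms plus the previously established Lemma \ref{lem:fintriv-F}. The key observation is that clause (iii) of Definition \ref{dfn:LFI} already hands us, for any $\varphi \in \lang$, a trivializing set of the form $\bigcirc(\varphi) \cup \{\varphi, \neg\varphi\}$; all that needs to be checked is that this set is finite under the given hypothesis.

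First I would fix any $\varphi \in \lang$ (for concreteness, take $\varphi = p$ for a propositional variable $p \in V$). Clause (iii) of the LFI definition then gives $C_\vdash(\bigcirc(p) \cup \{p, \neg p\}) = \lang$. Since $\bigcirc(p)$ is finite by hypothesis, the set $\bigcirc(p) \cup \{p, \neg p\}$ is a finite subset of $\lang$ whose closure is all of $\lang$. This is exactly the definition of finite trivializability (Definition \ref{dfn:fintriv}), so the first conclusion is immediate. (If one prefers to work with an arbitrary $\varphi$ rather than a variable, one uses that $\bigcirc(\varphi) = \{\psi(\varphi) \mid \psi(p) \in \bigcirc(p)\}$ has cardinality at most $|\bigcirc(p)|$, hence is also finite.)

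For the \emph{moreover} clause, I would invoke Lemma \ref{lem:fintriv-F}. All of its hypotheses are in place: as part of being an LFI, $\mathcal{S}$ is standard, hence Tarskian and in particular transitive; the signature is finite and now assumed to contain a conjunction operator $\land$ for which $(\land E)$ holds; and $\lang$ is the formula algebra over the denumerable set of variables $V$ (non-empty). Thus, from the finite trivializability established in the previous step, Lemma \ref{lem:fintriv-F} yields a $\varphi \in \lang$ such that $C_\vdash(\varphi) = \lang$, namely $\varphi$ can be taken as the conjunction of all elements of $\bigcirc(p) \cup \{p, \neg p\}$ in any fixed order.

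There is no real obstacle here: the argument is essentially a bookkeeping step that packages clause (iii) of the LFI definition as finite trivializability and then pipes it into Lemma \ref{lem:fintriv-F}. The only minor point worth flagging in the write-up is that $\bigcirc(\varphi)$ is defined via substitution from $\bigcirc(p)$, so finiteness of the defining schema $\bigcirc(p)$ transfers to finiteness of $\bigcirc(\varphi)$ for every $\varphi$; this justifies why the same conclusion could be drawn starting from any formula, not just a variable.
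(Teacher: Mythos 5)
Your proposal is correct and matches the paper's own proof essentially verbatim: both observe that clause (iii) of Definition \ref{dfn:LFI} makes the finite set $\bigcirc(\alpha)\cup\{\alpha,\neg\alpha\}$ trivializing, and both then invoke Lemma \ref{lem:fintriv-F} (whose hypotheses hold because an LFI is standard, hence transitive) for the \emph{moreover} clause. Nothing further is needed.
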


\begin{proof}
    Suppose $\mathcal{S}$ is finitely gently explosive. Then, for any $\alpha\in\lang$, $\bigcirc(\alpha)$ is finite, and hence, $\bigcirc(\alpha)\cup\{\alpha,\neg\alpha\}$ is a finite subset of $\lang$. Since $C_\vdash(\bigcirc(\alpha)\cup\{\alpha,\neg\alpha\})=\lang$, $\mathcal{S}$ is finitely trivializable.

    Now, in case the signature of $\mathcal{S}$ has a conjunction operator $\land$ such that $(\land E)$ holds, then, by Lemma \ref{lem:fintriv-F}, there exists $\varphi\in\lang$ such that $C_\vdash(\varphi)=\lang$.
\end{proof}

\begin{rem}
    It cannot, however, be concluded that a finitely trivializable LFI is always finitely gently explosive since the definition of an LFI does not rule out the presence of some $\alpha\in\lang$ such that $C_\vdash(\{\alpha,\neg\alpha\})=\lang$ even though $\neg$-ECQ does not hold.
\end{rem}

We already know that LFIs are paraconsistent, so $\neg$-ECQ fails in all of them. The following theorem lists the status of the other explosion principles in LFIs.

\begin{thm}\label{thm:LFI}
    Suppose $\mathcal{S}=\langle\lang,\vdash\rangle$ is an LFI with respect to $\neg$ and $\bigcirc(p)$.
    \begin{enumerate}[label=(\roman*)]
        \item pfECQ and sECQ hold in $\mathcal{S}$
        \item If $\mathcal{S}$ is finitely gently explosive, and the signature of $\mathcal{S}$ has a conjunction operator $\land$ for which the elimination rule $(\land E)$, as described in Lemma \ref{lem:fintriv-F}, holds, then spECQ and gECQ hold in it.
    \end{enumerate}
\end{thm}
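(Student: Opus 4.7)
The plan is to exploit the gentle principle of explosion, clause (iii) of Definition \ref{dfn:LFI}, which guarantees $C_\vdash(\bigcirc(\varphi)\cup\{\varphi,\neg\varphi\})=\lang$ for every $\varphi\in\lang$. Since an LFI is Tarskian (hence monotonic and transitive), the strategy for part (i) is to build the required explosive sets by unioning with $\bigcirc(p)\cup\{p,\neg p\}$ for a well-chosen variable $p$, and then to argue properness via a variable-counting/freshness argument enabled by the denumerability of $V$.

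For part (i), I would first establish pfECQ. Given any $\Gamma\subsetneq\lang$, pick a witness $\gamma_0\in\lang\setminus\Gamma$. Since $\var(\gamma_0)$ is finite while $V$ is denumerable, choose $p\in V\setminus\var(\gamma_0)$ and set $\Delta=\Gamma\cup\bigcirc(p)\cup\{p,\neg p\}$. Then $\Gamma\subseteq\Delta$, and $C_\vdash(\Delta)=\lang$ by monotonicity from the gentle principle. The key observation is that every element of $\bigcirc(p)\cup\{p,\neg p\}$ contains $p$ as a variable (because formulas in $\bigcirc(p)$ depend \emph{exactly} on $p$), so $\gamma_0$ belongs to none of them; combined with $\gamma_0\notin\Gamma$, this gives $\gamma_0\notin\Delta$, hence $\Delta\subsetneq\lang$. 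This establishes pfECQ, and sECQ then follows immediately from Theorem \ref{thm:pfECQ->sECQ} (noting $\lang$ is denumerable, hence has at least two distinct elements).

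For part (ii), the added hypotheses — finite gentle explosiveness together with $\land$ and $(\land E)$ — feed directly into the machinery already developed. By Lemma \ref{lem:fingentexpl->fintriv}, $\mathcal{S}$ is finitely trivializable (and in fact admits a bottom particle $\varphi$ with $C_\vdash(\varphi)=\lang$). Since an LFI is Tarskian, monotonicity and transitivity hold, and the signature hypotheses on $\land$ match those of Theorem \ref{thm:fintriv-spECQ}, so a direct invocation of that theorem yields spECQ, and hence gECQ via Theorem \ref{thm:spECQ->sECQ}. The main obstacle throughout is really only the properness argument in (i)'s pfECQ: one must be careful that the \emph{fresh variable} $p$ actually forces $\gamma_0\notin\bigcirc(p)\cup\{p,\neg p\}$, which is where the precise clause ``depending exactly on $p$'' in Definition \ref{dfn:LFI} does the essential work — without it, a formula in $\bigcirc(p)$ might happen to coincide with $\gamma_0$ and the proper-containment argument would collapse.
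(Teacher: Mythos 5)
Your proof is correct. Part (ii) coincides with the paper's argument: both invoke Lemma \ref{lem:fingentexpl->fintriv} to obtain finite trivializability and a bottom particle, and then apply Theorem \ref{thm:fintriv-spECQ} (your extra appeal to Theorem \ref{thm:spECQ->sECQ} for gECQ is harmless but redundant, since Theorem \ref{thm:fintriv-spECQ} already delivers gECQ). For part (i) you use the same key construction as the paper --- enlarge $\Gamma$ to $\Delta=\Gamma\cup\bigcirc(p)\cup\{p,\neg p\}$ and get $C_\vdash(\Delta)=\lang$ from the gentle principle of explosion plus monotonicity --- but your properness argument is genuinely different and cleaner. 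The paper first splits into cases according to whether $C_\vdash(\Gamma)=\lang$, takes an arbitrary $\alpha$, and argues by contradiction: if $\Delta=\lang$, then a second gently explosive set $\bigcirc(q)\cup\{q,\neg q\}$ built from a fresh variable $q$ must already sit inside $\Gamma$, forcing $C_\vdash(\Gamma)=\lang$ against the case hypothesis. You instead fix a witness $\gamma_0\in\lang\setminus\Gamma$, choose $p\in V\setminus\var(\gamma_0)$, and observe directly that $\gamma_0$ cannot belong to $\bigcirc(p)\cup\{p,\neg p\}$ because every formula in that set has $p$ among its variables --- which is, as you correctly flag, exactly where the clause ``depending exactly on $p$'' does the work. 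This eliminates both the case split and the argument by contradiction at no cost in generality; both routes ultimately rest on the same two resources, namely the denumerability of $V$ and the variable constraint on $\bigcirc(p)$.
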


\begin{proof}
    \begin{enumerate}[label=(\roman*)]
        \item Let $\Gamma\subsetneq\lang$. If $C_\vdash(\Gamma)=\lang$, then we are done. So, suppose $C_\vdash(\Gamma)\neq\lang$. Let $\alpha\in\lang$, and $\Delta=\Gamma\cup\bigcirc(\alpha)\cup\{\alpha,\neg\alpha\}$. Clearly, $\Gamma\subseteq\Delta$. Now, by the gentle principle of explosion, $C_\vdash(\bigcirc(\alpha)\cup\{\alpha,\neg\alpha\})=\lang$. So, by monotonicity, $C_\vdash(\Delta)=\lang$. 
        
        We claim that $\Delta\subsetneq\lang$. Suppose the contrary, i.e., $\Delta=\Gamma\cup\bigcirc(\alpha)\cup\{\alpha,\neg\alpha\}=\lang$. Now, $\bigcirc(\alpha)$ is the result of substituting $\alpha$ for the propositional variable $p$ in each element of $\bigcirc(p)$, which is a non-empty set of formulas depending exactly on $p$. So, $\var(\bigcirc(\alpha))=\var(\alpha)$, where $\var(\cdot)$ is as described in Definition \ref{dfn:lvarinclusion}. Thus, $\var(\bigcirc(\alpha)\cup\{\alpha,\neg\alpha\})=\var(\alpha)$, a finite set. Since the set of propositional variables $V$ is denumerable, there exists $q\in V\setminus\var(\bigcirc(\alpha)\cup\{\alpha,\neg\alpha\})$. Clearly, $(\bigcirc(q)\cup\{q,\neg q\})\cap(\bigcirc(\alpha)\cup\{\alpha,\neg\alpha\})=\emptyset$. Then, as $\Gamma\cup\bigcirc(\alpha)\cup\{\alpha,\neg\alpha\}=\lang$, $\bigcirc(q)\cup\{q,\neg q\}\subseteq\Gamma$. Again, by the gentle principle of explosion, $C_\vdash(\bigcirc(q)\cup\{q,\neg q\})=\lang$, and hence, by monotonicity, $C_\vdash(\Gamma)=\lang$. This is contrary to our assumption. So, $\Delta\subsetneq\lang$.
        
        Hence, for every $\Gamma\subsetneq\lang$, there exists $\Delta\subsetneq\lang$ such that $\Gamma\subseteq\Delta$ and $C_\vdash(\Delta)=\lang$. In other words, pfECQ holds in $\mathcal{S}$. Since $\lang$ is denumerable, and hence, $\lvert\lang\rvert\ge2$, by Theorem \ref{thm:pfECQ->sECQ}, sECQ also holds in $\mathcal{S}$.
        
        \item Suppose $\mathcal{S}$ is finitely gently explosive. Then, by Lemma \ref{lem:fingentexpl->fintriv}, $\mathcal{S}$ is finitely trivializable, and moreover, there exists $\varphi\in\lang$ such that $C_\vdash(\varphi)=\lang$. Since $\mathcal{S}$ is an LFI, it is monotonic and transitive. Hence, by Theorem \ref{thm:fintriv-spECQ}, spECQ and gECQ hold in $\mathcal{S}$.
    \end{enumerate}
\end{proof}

\begin{rem}
    We see, from the above theorem, that if $\mathcal{S}=\langle\lang,\vdash\rangle$ is a finitely gently explosive LFI with a conjunction operator $\land$ for which $(\land E)$ holds, then all the explosion principles discussed in this section, barring, of course, ECQ, hold in $\mathcal{S}$.

    The logic $P^1$ was discussed in Example \ref{exa:sECQ-P1}, and it was pointed out that while ECQ fails, gECQ and sECQ hold in it. Subsequently, it was also presented as a logic, where spECQ and pfECQ hold, in Example \ref{exa:spECQ/pfECQ-P1}. $P^1$ is, in fact, a finitely gently explosive LFI, as demonstrated in \cite{Carnielli2007}. Thus, the fact that all the principles of explosion, except for ECQ, hold in $P^1$ can be obtained as a corollary of Theorem \ref{thm:LFI} as well.

    If $\mathcal{S}$ is not finitely trivializable, and thus, not finitely gently explosive either, as discussed in Remark \ref{rem:non-fintriv}, none of ECQ, gECQ, and spECQ holds in $\mathcal{S}$. However, from the above theorem, we see that even in this case, sECQ and pfECQ, the only two principles of explosion that can hold in logics that are not finitely trivializable, hold in $\mathcal{S}$.
\end{rem}

\section{Principles of partial explosion}

In this section, we investigate some generalizations of gECQ, which differ from those considered in the previous section. The main idea is to try to move away from the universality of explosion. In other words, instead of having each element $\alpha$ of the underlying set $\lang$ of a logic $\mathcal{S}=\langle\lang,\vdash\rangle$ explode with some $\beta\in\lang$, the principles of explosion considered below limit the explosion to the members of some $\Gamma\subseteq\lang$ and that too with some or all elements of some $\Delta\subseteq\lang$. Hence, we call these \emph{principles of partial explosion}. Suppose $\mathcal{S}=\langle\lang,\vdash\rangle$ is a logic. Then, these principles can be formulated as follows.

\begin{itemize}
    \item There exists $\Gamma,\Delta\subseteq\lang$ such that $\Gamma\neq\emptyset$, and for every $\alpha\in\Gamma$, there exists $\beta\in\Delta$ such that $C_\vdash(\{\alpha,\beta\})=\lang$. (parECQ1)
    \item There exists $\Gamma,\Delta\subseteq\lang$ such that $\Gamma\times\Delta\neq\emptyset$, and for every $(\alpha,\beta)\in\Gamma\times\Delta$, $C_\vdash(\{\alpha,\beta\})=\lang$. (parECQ2)
\end{itemize}
The string `par' in the names of the above principles denote partial.

\begin{thm}\label{thm:parECQ1<->parECQ2}
    Suppose $\mathcal{S}=\langle\lang,\vdash\rangle$ be a logic. parECQ1 holds in $\mathcal{S}$ iff parECQ2 holds in it.
\end{thm}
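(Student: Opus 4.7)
The equivalence of parECQ1 and parECQ2 has one easy direction and one direction that requires only a mild shrinking argument, so the proof will be short and symmetric in presentation.

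For the direction parECQ2 $\Rightarrow$ parECQ1, I would simply reuse the witnesses. Suppose $\Gamma, \Delta \subseteq \lang$ witness parECQ2, i.e., $\Gamma \times \Delta \neq \emptyset$ and $C_\vdash(\{\alpha,\beta\}) = \lang$ for every $(\alpha,\beta) \in \Gamma \times \Delta$. Then both $\Gamma$ and $\Delta$ are non-empty; in particular $\Gamma \neq \emptyset$. Given any $\alpha \in \Gamma$, choose any $\beta \in \Delta$ (possible because $\Delta \neq \emptyset$), and $(\alpha,\beta) \in \Gamma \times \Delta$ gives $C_\vdash(\{\alpha,\beta\}) = \lang$. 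So the same $\Gamma, \Delta$ also witness parECQ1.

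For the direction parECQ1 $\Rightarrow$ parECQ2, the plan is to collapse the witnesses to a single pair. Suppose $\Gamma, \Delta$ witness parECQ1. Since $\Gamma \neq \emptyset$, fix some $\alpha_0 \in \Gamma$. By parECQ1 applied to $\alpha_0$, there exists $\beta_0 \in \Delta$ with $C_\vdash(\{\alpha_0,\beta_0\}) = \lang$. Now take $\Gamma' = \{\alpha_0\}$ and $\Delta' = \{\beta_0\}$. Then $\Gamma' \times \Delta' = \{(\alpha_0,\beta_0)\} \neq \emptyset$, and the unique pair in $\Gamma' \times \Delta'$ satisfies the explosion condition. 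Hence $\Gamma', \Delta'$ witness parECQ2.

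The main ``obstacle'' worth flagging is really a non-obstacle: one might worry that parECQ2 is genuinely stronger because it requires every pair in $\Gamma \times \Delta$ to explode, whereas parECQ1 only asks for some matching $\beta$ per $\alpha$. The shrinking trick sidesteps this by noting that a single-point choice of $\Gamma$ and $\Delta$ makes the universal-over-pairs condition trivial to satisfy. So no assumptions on $\mathcal{S}$ (reflexivity, monotonicity, transitivity, or the existence of connectives) are needed for either direction.
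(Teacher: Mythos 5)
Your proof is correct and follows essentially the same route as the paper: the forward direction by reusing the witnesses, and the converse by shrinking $\Gamma$ to a singleton so that the universal condition over $\Gamma'\times\Delta'$ becomes trivial (the paper takes $\Delta'$ to be all $\beta\in\Delta$ that explode with the chosen $\alpha$ rather than a single $\beta_0$, but this is an immaterial difference). Nothing further is needed.
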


\begin{proof}
    It is easy to see that parECQ2 implies parECQ1. To prove the converse, suppose parECQ1 holds in $\mathcal{S}$. Then, there exists $\Gamma,\Delta\subseteq\lang$ such that $\Gamma\neq\emptyset$, and for every $\alpha\in\Gamma$, there exists $\beta\in\Delta$ such that $C_\vdash(\{\alpha,\beta\})=\lang$. Now, let $\Gamma^\prime=\{\alpha\}\subseteq\Gamma$ and $\Delta^\prime=\{\beta\in\Delta\mid\,C_\vdash(\{\alpha,\beta\})=\lang\}$. Clearly, $\Gamma^\prime\neq\emptyset$ and, by parECQ1, $\Delta^\prime\neq\emptyset$. Hence, $\Gamma^\prime\times\Delta^\prime\neq\emptyset$. Now, by the construction of $\Gamma^\prime,\Delta^\prime$, for every $(\alpha,\beta)\in\Gamma^\prime\times\Delta^\prime$, $C_\vdash(\{\alpha,\beta\})=\lang$. Thus, parECQ2 holds.
\end{proof}

\begin{rem}
    In light of the above theorem, we will use parECQ to refer to both principles parECQ1 and parECQ2.
\end{rem}

\begin{thm}\label{thm:gECQ->parECQ}
    Suppose $\mathcal{S}=\langle\lang,\vdash\rangle$ is a logic. If gECQ holds in $\mathcal{S}$, then parECQ also holds in it.
\end{thm}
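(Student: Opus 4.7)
The proof plan here is essentially a direct unfolding of definitions, so I will keep the discussion brief.

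The plan is to show parECQ1 holds (which, by Theorem \ref{thm:parECQ1<->parECQ2}, is equivalent to parECQ). Assuming $\lang \neq \emptyset$ (the standard convention in the paper for a logic), I would simply take $\Gamma = \Delta = \lang$. Then $\Gamma \neq \emptyset$ by assumption, and for each $\alpha \in \Gamma = \lang$, gECQ provides some $\beta \in \lang = \Delta$ such that $C_\vdash(\{\alpha,\beta\}) = \lang$. This verifies parECQ1 on the nose.

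There is no real obstacle here; the only subtlety is the edge case where $\lang = \emptyset$, in which gECQ holds vacuously while parECQ1 fails (since it requires $\Gamma \neq \emptyset$). I would either note implicitly that a logic is understood to have a non-empty underlying set, or add the mild hypothesis $\lang \neq \emptyset$ in the statement. Since every other example in the section assumes $\lang$ is non-empty (indeed, typically denumerable), this is a non-issue in practice, and the proof reduces to a one-line application of the definitions.
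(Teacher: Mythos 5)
Your proof is correct and is essentially the same one-line definitional unfolding as the paper's; the only cosmetic difference is that the paper fixes a single $\alpha$ and takes $\Gamma=\{\alpha\}$, $\Delta=\{\beta\}$ for the corresponding witness $\beta$, whereas you take $\Gamma=\Delta=\lang$ and verify parECQ1 for all $\alpha$ at once. Your remark about the $\lang=\emptyset$ edge case applies equally to the paper's proof (which also begins by choosing some $\alpha\in\lang$) and is handled by the standing convention that the underlying set is non-empty.
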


\begin{proof}
    Suppose gECQ holds in $\mathcal{S}$. Let $\alpha\in\lang$. Then, by gECQ, there exists $\beta\in\lang$ such that $C_\vdash(\{\alpha,\beta\})=\lang$. Let $\Gamma=\{\alpha\}$ and $\Delta=\{\beta\}$. This choice of $\Gamma,\Delta$ clearly shows that parECQ holds in $\mathcal{S}$.
\end{proof}

\begin{cor}
    Suppose $\mathcal{S}=\langle\lang,\vdash\rangle$ is a logic with a unary operator $\neg$. If $\neg$-ECQ holds in $\mathcal{S}$, then parECQ also holds in it.
\end{cor}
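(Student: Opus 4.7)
The plan is to observe that $\neg$-ECQ is simply the special case of gECQ in which the witness $\beta$ required by gECQ is chosen uniformly as $\neg\alpha$. Concretely, suppose $\neg$-ECQ holds in $\mathcal{S}$, and let $\alpha\in\lang$ be arbitrary. Then by the hypothesis $\{\alpha,\neg\alpha\}\vdash\gamma$ for every $\gamma\in\lang$, which is exactly the assertion that $C_\vdash(\{\alpha,\neg\alpha\})=\lang$. Taking $\beta:=\neg\alpha$ we see that for every $\alpha\in\lang$ there exists $\beta\in\lang$ (namely $\neg\alpha$) such that $C_\vdash(\{\alpha,\beta\})=\lang$, so gECQ holds in $\mathcal{S}$.

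Applying Theorem \ref{thm:gECQ->parECQ} then immediately yields that parECQ holds in $\mathcal{S}$, which is the desired conclusion. There is no real obstacle here: the whole point of the corollary is to record that the classical $\neg$-ECQ principle is strong enough to force the much weaker partial-explosion principle parECQ, and this follows by a two-line chain through the previous theorem. If one wished to avoid invoking Theorem \ref{thm:gECQ->parECQ} and prove parECQ2 directly, it would suffice to pick any $\alpha\in\lang$ and set $\Gamma:=\{\alpha\}$, $\Delta:=\{\neg\alpha\}$; then $\Gamma\times\Delta$ is nonempty and its unique pair $(\alpha,\neg\alpha)$ satisfies $C_\vdash(\{\alpha,\neg\alpha\})=\lang$ by $\neg$-ECQ, so parECQ2 (and hence parECQ, via Theorem \ref{thm:parECQ1<->parECQ2}) holds.
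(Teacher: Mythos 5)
Your proof is correct and follows essentially the same route as the paper: the paper deduces gECQ from $\neg$-ECQ by citing the contrapositive of Theorem \ref{thm:NFpara->para}, whereas you verify that implication inline by taking $\beta:=\neg\alpha$, and then both arguments conclude via Theorem \ref{thm:gECQ->parECQ}. Your supplementary direct construction of a complementary pair $(\{\alpha\},\{\neg\alpha\})$ for parECQ2 is also valid, though not needed.
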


\begin{proof}
    Suppose $\neg$-ECQ holds in $\mathcal{S}$. Then, by Theorem \ref{thm:NFpara->para}, gECQ holds in $\mathcal{S}$. Thus, by Theorem \ref{thm:gECQ->parECQ}, parECQ holds in $\mathcal{S}$.
\end{proof}

\begin{cor}\label{cor:spECQ->parECQ}
    Suppose $\mathcal{S}=\langle\lang,\vdash\rangle$ is a logic such that $\lang$ has at least two distinct elements. If spECQ holds in $\mathcal{S}$, then parECQ also holds in it.
\end{cor}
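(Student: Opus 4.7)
The plan is to observe that this corollary follows immediately by chaining two earlier results, so no new argument is required. First I would invoke Theorem \ref{thm:spECQ->sECQ}, which states that if $\lang$ has at least two distinct elements and spECQ holds in $\mathcal{S}$, then gECQ holds in $\mathcal{S}$. Then I would apply Theorem \ref{thm:gECQ->parECQ}, which states that gECQ implies parECQ in any logic. Composing these two implications yields the desired conclusion under the given hypothesis that $\lang$ has at least two distinct elements.

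The only (very minor) thing to double-check is that the hypothesis ``$\lang$ has at least two distinct elements'' is strong enough to power Theorem \ref{thm:spECQ->sECQ}, which it is verbatim; no additional assumption such as three distinct elements (which would be needed if we were routing through sECQ) is required, because the route goes through gECQ, not sECQ. Since Theorem \ref{thm:gECQ->parECQ} has no cardinality hypothesis on $\lang$, the two-element hypothesis suffices for the full implication.

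There is essentially no obstacle in this proof; the main ``work'' is just citing the two previous results in the correct order. A clean one-line proof would read: by Theorem \ref{thm:spECQ->sECQ}, spECQ implies gECQ under the stated cardinality hypothesis, and by Theorem \ref{thm:gECQ->parECQ}, gECQ implies parECQ, so spECQ implies parECQ, as claimed.
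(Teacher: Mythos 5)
Your proof is correct and is essentially identical to the paper's: both route spECQ through gECQ via Theorem \ref{thm:spECQ->sECQ} (which needs only the two-element hypothesis) and then apply Theorem \ref{thm:gECQ->parECQ}. Your observation that no three-element hypothesis is needed because the argument does not pass through sECQ is accurate.
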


\begin{proof}
    Suppose spECQ holds in $\mathcal{S}$. Since $\lang$ has at least two distinct elements, by Theorem \ref{thm:spECQ->sECQ}, gECQ holds in $\mathcal{S}$. Then, the conclusion follows from Theorem \ref{thm:gECQ->parECQ}.
\end{proof}

The converse of Theorem \ref{thm:gECQ->parECQ} does not hold as shown by the next example. 

\begin{exa}\label{exa:parECQ!->gECQ}
        Let $\mathcal{S}=\langle\lang,\vdash\rangle$ be a logic, where $\lang=\NN$ and $\vdash\,\subseteq\pow(\lang)\times\lang$ is such that, for any $\Gamma\subseteq\lang$,
    \[
    C_\vdash(\Gamma)=\left\{\begin{array}{ll}
         \lang&\hbox{if }\Gamma=\{0\},\\
         \Gamma&\hbox{otherwise}.
    \end{array}\right.
    \]
    Taking $\Gamma=\{0\}=\Delta$, we see that parECQ holds in $\mathcal{S}$. However, for any $\alpha\in\lang$ such that $\alpha\neq0$, $C_\vdash(\{\alpha,\beta\})\neq\lang$ for all $\beta\in\lang$. Thus, gECQ fails in $\mathcal{S}$.
\end{exa}

\begin{rem}
    We can construct similar examples, as above, to see that none of the other principles of explosion, viz., sECQ or pfECQ, implies or is implied by parECQ. However, that is unnecessary as it is not hard to see that parECQ is incomparable to sECQ and pfECQ. This is because sECQ and pfECQ yield an explosive set, of unknown size, for each element/subset of $\lang$, while parECQ claims the existence of two subsets $\Gamma,\Delta$ of $\lang$ such that for each $(\alpha,\beta)\in\Gamma\times\Delta$, the set $\{\alpha,\beta\}$ is explosive.
\end{rem}

Example \ref{exa:parECQ!->gECQ} already establishes that parECQ does not imply gECQ. Theorem \ref{thm:parECQ+!F->>gECQ} below further refines this non-implication by giving a condition under which parECQ implies the failure of gECQ. We first make the following definition for the ease of the subsequent presentation.

\begin{dfn}\label{dfn:complmentary}
    Suppose $\mathcal{S}=\langle\lang,\vdash\rangle$ is a logic. A pair of sets $(\Gamma,\Delta)\in\pow(\lang)\times\pow(\lang)$ is said to be 
    \begin{enumerate}[label=(\alph*)]
        \item \emph{complementary} in $\mathcal{S}$ if for every $(\alpha,\beta)\in\Gamma\times\Delta$, $C_\vdash(\{\alpha,\beta\})=\lang$.
        \item \emph{weakly complementary} in $\mathcal{S}$ if for every $\alpha\in\Gamma$, there exists $\beta\in\Delta$ such that $C_\vdash(\{\alpha,\beta\})=\lang$.
    \end{enumerate}
    
\end{dfn}

\begin{rem}\label{rem:parECQ-complementary}
     It is easy to see that, given a logic $\mathcal{S}=\langle\lang,\vdash\rangle$, parECQ (using the parECQ1 formulation) holds in $\mathcal{S}$ iff there is a weakly complementary pair of sets $(\Gamma,\Delta)\in\pow(\lang)\times\pow(\lang)$ in $\mathcal{S}$ with $\Gamma\neq\emptyset$. Also, parECQ (using the parECQ2 formulation) holds in $\mathcal{S}$ iff there is a complementary pair of sets $(\Gamma,\Delta)\in\pow(\lang)\times\pow(\lang)$ in $\mathcal{S}$ with $\Gamma\times\Delta\neq\emptyset$. Thus, there exists a complementary pair in $\mathcal{S}$ iff there is a weakly complementary pair in $\mathcal{S}$. If a pair of sets is complementary, then clearly, it is also weakly complementary. However, not every weakly complementary pair is complementary.
\end{rem}

\begin{lem}\label{lem:!F<->disjoint.complementary}
    Suppose $\mathcal{S}=\langle\lang,\vdash\rangle$ is a logic. There does not exist $\varphi\in\lang$ such that $C_\vdash(\varphi)=\lang$ iff for all complementary $(\Gamma,\Delta)$ in $\mathcal{S}$, with $\Gamma\times\Delta\neq\emptyset$, $\Gamma\cap\Delta=\emptyset$.
\end{lem}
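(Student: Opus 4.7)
The plan is to prove both directions by contrapositive, with the key observation being that when $\varphi \in \Gamma \cap \Delta$, the pair $(\varphi,\varphi)$ lies in $\Gamma \times \Delta$, and the set $\{\varphi,\varphi\}$ is simply $\{\varphi\}$. So complementarity applied to such a coincident pair reduces exactly to the existence of a singleton whose closure is $\lang$.

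For the forward direction, I would assume no $\varphi \in \lang$ satisfies $C_\vdash(\varphi) = \lang$ and let $(\Gamma,\Delta)$ be any complementary pair with $\Gamma \times \Delta \neq \emptyset$. To show $\Gamma \cap \Delta = \emptyset$, suppose for contradiction some $\varphi \in \Gamma \cap \Delta$. Then $(\varphi,\varphi) \in \Gamma \times \Delta$, so by complementarity $C_\vdash(\{\varphi,\varphi\}) = \lang$; but $\{\varphi,\varphi\} = \{\varphi\}$, which yields $C_\vdash(\varphi) = \lang$, contradicting the assumption.

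For the converse, I would argue contrapositively: assume there exists $\varphi \in \lang$ with $C_\vdash(\varphi) = \lang$, and exhibit a complementary pair $(\Gamma,\Delta)$ with $\Gamma \times \Delta \neq \emptyset$ and $\Gamma \cap \Delta \neq \emptyset$. The natural choice is $\Gamma = \Delta = \{\varphi\}$. Then $\Gamma \times \Delta = \{(\varphi,\varphi)\}$ is non-empty, and the only pair to check is $(\varphi,\varphi)$, for which $C_\vdash(\{\varphi,\varphi\}) = C_\vdash(\varphi) = \lang$, establishing complementarity. But $\Gamma \cap \Delta = \{\varphi\} \neq \emptyset$, contradicting the hypothesis of the converse direction.

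I do not anticipate any real obstacle; the lemma is essentially a reformulation, and the entire content lies in the identity $\{\varphi,\varphi\} = \{\varphi\}$. No Tarskian assumptions (reflexivity, monotonicity, transitivity) are needed, which is worth highlighting since the lemma is about to be used in Theorem \ref{thm:parECQ+!F->>gECQ} in a general setting.
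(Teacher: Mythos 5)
Your proposal is correct and matches the paper's own proof essentially step for step: both directions are argued via the same observation that $\varphi\in\Gamma\cap\Delta$ gives $(\varphi,\varphi)\in\Gamma\times\Delta$ and hence $C_\vdash(\varphi)=\lang$, and the converse uses the identical witness $\Gamma=\Delta=\{\varphi\}$. Nothing further is needed.
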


\begin{proof}
    Suppose $(\Gamma,\Delta)\in\pow(\lang)\times\pow(\lang)$ is complementary in $\mathcal{S}$ such that $\Gamma\times\Delta\neq\emptyset$. Then, $\Gamma,\Delta\neq\emptyset$. Suppose further that $\Gamma\cap\Delta\neq\emptyset$, and let $\varphi\in\Gamma\cap\Delta$. Then, $(\varphi,\varphi)\in\Gamma\times\Delta$, and hence, by definition of a complementary pair of sets, $C_\vdash(\varphi)=\lang$. Thus, if there is no $\varphi\in\lang$ such that $C_\vdash(\varphi)=\lang$, then, for any complementary $(\Gamma,\Delta)$ in $\mathcal{S}$, with $\Gamma\times\Delta\neq\emptyset$, $\Gamma\cap\Delta=\emptyset$.

    Conversely, suppose that for any complementary $(\Gamma,\Delta)$ in $\mathcal{S}$, with $\Gamma\times\Delta\neq\emptyset$, $\Gamma\cap\Delta=\emptyset$. Let $\varphi\in\lang$ such that $C_\vdash(\varphi)=\lang$, and $\Gamma=\Delta=\{\varphi\}$. Then, clearly, $(\Gamma,\Delta)$ is complementary in $\mathcal{S}$ such that $\Gamma\times\Delta\neq\emptyset$ but $\Gamma\cap\Delta\neq\emptyset$. This contradicts our assumption regarding the complementary pairs in $\mathcal{S}$. Hence, there cannot be any $\varphi\in\lang$ such that $C_\vdash(\varphi)=\lang$.
\end{proof}

\begin{rem}
    It follows from the above lemma that, given a logic $\mathcal{S}=\langle\lang,\vdash\rangle$, if there exists $\varphi\in\lang$ such that $C_\vdash(\varphi)=\lang$, then there exists a complementary pair $(\Gamma,\Delta)$ in $\mathcal{S}$ with $\Gamma\times\Delta\neq\emptyset$. This implies that parECQ holds in $\mathcal{S}$, per the argument in Remark \ref{rem:parECQ-complementary}.
\end{rem}

\begin{lem}\label{lem:wkComp-poset}
    Suppose $\mathcal{S}=\langle\lang,\vdash\rangle$ is a logic. Let 
    \[
    \mathfrak{C}=\{(\Gamma,\Delta)\in\pow(\lang)\times\pow(\lang)\mid\,(\Gamma,\Delta)\hbox{ is weakly complementary, }\Gamma\neq\emptyset,\hbox{ and }\Gamma\cap\Delta=\emptyset\},
    \]
    and $\preceq\,\subseteq\mathfrak{C}\times\mathfrak{C}$ be a relation defined as follows.
    \[
    (\Gamma,\Delta)\preceq(\Gamma^\prime,\Delta^\prime)\quad\hbox{iff}\quad\Gamma\subseteq\Gamma^\prime\hbox{ and }\Delta\subseteq\Delta^\prime.
    \]
    Then, $(\mathfrak{C},\preceq)$ is a partially ordered set.
\end{lem}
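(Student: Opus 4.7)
The plan is to verify the three defining properties of a partial order (reflexivity, antisymmetry, transitivity) for $\preceq$ on $\mathfrak{C}$. Since $\preceq$ is simply the componentwise inclusion order, the argument amounts to transferring the well-known partial order structure of $(\pow(\lang), \subseteq)$ to the Cartesian product and then observing that the defining conditions of $\mathfrak{C}$ play no role in the verification of the order axioms (they only restrict the underlying set).

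First, for reflexivity, take any $(\Gamma, \Delta) \in \mathfrak{C}$. Since $\Gamma \subseteq \Gamma$ and $\Delta \subseteq \Delta$, we have $(\Gamma, \Delta) \preceq (\Gamma, \Delta)$. Next, for antisymmetry, suppose $(\Gamma, \Delta) \preceq (\Gamma', \Delta')$ and $(\Gamma', \Delta') \preceq (\Gamma, \Delta)$. Unpacking the definition, this gives $\Gamma \subseteq \Gamma'$, $\Gamma' \subseteq \Gamma$, $\Delta \subseteq \Delta'$, and $\Delta' \subseteq \Delta$, whence $\Gamma = \Gamma'$ and $\Delta = \Delta'$ by antisymmetry of $\subseteq$, so $(\Gamma, \Delta) = (\Gamma', \Delta')$. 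Finally, for transitivity, suppose $(\Gamma_1, \Delta_1) \preceq (\Gamma_2, \Delta_2)$ and $(\Gamma_2, \Delta_2) \preceq (\Gamma_3, \Delta_3)$. Then $\Gamma_1 \subseteq \Gamma_2 \subseteq \Gamma_3$ and $\Delta_1 \subseteq \Delta_2 \subseteq \Delta_3$, so by transitivity of $\subseteq$ we get $\Gamma_1 \subseteq \Gamma_3$ and $\Delta_1 \subseteq \Delta_3$, i.e., $(\Gamma_1, \Delta_1) \preceq (\Gamma_3, \Delta_3)$.

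There is no real obstacle here: the only mild subtlety is that $\preceq$ is defined on $\mathfrak{C}$, not on the whole of $\pow(\lang) \times \pow(\lang)$, but nothing in the verification of the order axioms requires us to produce new elements of $\mathfrak{C}$, so the weak complementarity, non-emptiness, and disjointness conditions in the definition of $\mathfrak{C}$ are simply inherited from the given pairs. In fact, $(\mathfrak{C}, \preceq)$ is nothing more than a sub-poset of the product poset $(\pow(\lang), \subseteq) \times (\pow(\lang), \subseteq)$, and this observation essentially completes the proof. The lemma is almost certainly being established here in preparation for a Zorn's Lemma argument in the sequel, which is where the real work is likely to appear.
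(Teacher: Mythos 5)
Your proof is correct and matches the paper's intent: the paper dismisses this lemma as ``Straightforward,'' and your verification of reflexivity, antisymmetry, and transitivity via componentwise inclusion (i.e., recognizing $(\mathfrak{C},\preceq)$ as a sub-poset of the product poset) is exactly the routine argument being omitted.
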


\begin{proof}
    Straightforward.
\end{proof}

\begin{thm}\label{thm:parECQ+!F->>gECQ}
    Suppose $\mathcal{S}=\langle\lang,\vdash\rangle$ is a logic such that there does not exist any $\varphi\in\lang$ such that $C_\vdash(\varphi)=\lang$, and parECQ holds in it. Let 
    $(\mathfrak{C},\preceq)$ be the partially ordered set described in the above lemma. If gECQ holds in $\mathcal{S}$, then $(\mathfrak{C},\preceq)$ has a maximal element $(\Pi,\Lambda)$ such that $\Pi\cup\Lambda=\lang$.
\end{thm}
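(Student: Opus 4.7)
The plan is to apply Zorn's lemma to $(\mathfrak{C},\preceq)$ and then use gECQ to force the resulting maximal element to cover $\lang$.

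First I would verify the two hypotheses of Zorn. $\mathfrak{C}$ is nonempty: pick any $\alpha\in\lang$, apply gECQ to get $\beta$ with $C_\vdash(\{\alpha,\beta\})=\lang$, and note $\beta\neq\alpha$ since no singleton is trivializing; hence $(\{\alpha\},\{\beta\})\in\mathfrak{C}$. For upper bounds of a chain $\{(\Gamma_i,\Delta_i)\}_{i\in I}$ in $\mathfrak{C}$, I would take componentwise unions $(\bigcup_i\Gamma_i,\bigcup_i\Delta_i)$. Nonemptiness of the first coordinate and weak complementarity are immediate (any $\Delta_i$-witness for $\alpha\in\Gamma_i$ survives in the union). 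Disjointness uses the chain property: if some $\varphi$ lies in both unions, say $\varphi\in\Gamma_i$ and $\varphi\in\Delta_j$, then chain-comparability puts $\varphi$ into some $\Gamma_k\cap\Delta_k$, contradicting $(\Gamma_k,\Delta_k)\in\mathfrak{C}$.

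Given a maximal $(\Pi,\Lambda)$, I would show $\Pi\cup\Lambda=\lang$ by contradiction. Pick $\gamma\in\lang\setminus(\Pi\cup\Lambda)$ and use gECQ to obtain $\delta$ with $C_\vdash(\{\gamma,\delta\})=\lang$; again $\delta\neq\gamma$ by the no-trivializing-singleton hypothesis. If $\delta\in\Pi$, then $(\Pi,\Lambda\cup\{\gamma\})$ strictly extends $(\Pi,\Lambda)$ in $\mathfrak{C}$: weak complementarity is preserved because $\Lambda$ only grew, and disjointness holds since $\gamma\notin\Pi$. If $\delta\notin\Pi$, then $(\Pi\cup\{\gamma\},\Lambda\cup\{\delta\})$ strictly extends $(\Pi,\Lambda)$ in $\mathfrak{C}$: $\delta$ witnesses weak complementarity for $\gamma$, and disjointness follows from $\gamma\notin\Lambda$, $\delta\notin\Pi$, and $\gamma\neq\delta$. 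Either branch contradicts maximality.

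I expect the main obstacle to be the disjointness bookkeeping, both in the chain-union step and in the two extension cases; the hypothesis that no singleton is trivializing is what drives these, through the observations $\beta\neq\alpha$ and $\delta\neq\gamma$. Note also that the parECQ hypothesis is redundant here, since gECQ already implies parECQ by Theorem \ref{thm:gECQ->parECQ}.
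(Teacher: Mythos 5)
Your proof is correct and follows essentially the same route as the paper: Zorn's lemma applied to $(\mathfrak{C},\preceq)$ with componentwise unions as chain bounds, followed by a gECQ-driven extension of a maximal element; your two-case split ($\delta\in\Pi$ versus $\delta\notin\Pi$) cleanly merges the paper's Cases 1 and 3, and your nonemptiness argument for $\mathfrak{C}$ via gECQ replaces the paper's appeal to parECQ together with Lemma \ref{lem:!F<->disjoint.complementary}. Your closing remark that parECQ is redundant is right except in the degenerate case $\lang=\emptyset$, where gECQ and the no-trivializing-singleton hypothesis hold vacuously but $\mathfrak{C}$ is empty and has no maximal element, so the parECQ hypothesis (or anything else forcing $\lang\neq\emptyset$) cannot be dropped entirely.
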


\begin{proof}
    We first note that since parECQ holds in $\mathcal{S}$, by Remark \ref{rem:parECQ-complementary}, there exists complementary, and hence, also weakly complementary $(\Gamma,\Delta)$ in $\mathcal{S}$ such that $\Gamma\times\Delta\neq\emptyset$. Then, since there is no $\varphi\in\lang$ with $C_\vdash(\varphi)=\lang$, by Lemma \ref{lem:!F<->disjoint.complementary}, every such complementary $(\Gamma,\Delta)$ in $\mathcal{S}$ is such that $\Gamma\cap\Delta=\emptyset$. Hence, $\mathfrak{C}\neq\emptyset$. Now, suppose gECQ holds in $\mathcal{S}$. 
    
    Let $\mathcal{C}$ be a chain in $(\mathfrak{C},\preceq)$. 
    We claim that $\left(\displaystyle\bigcup_{(\Gamma,\Delta)\in\mathcal{C}}\Gamma,\displaystyle\bigcup_{(\Gamma,\Delta)\in\mathcal{C}}\Delta\right)$ is an upper bound of $\mathcal{C}$ in $\mathfrak{C}$. 
    Since for every $(\Gamma,\Delta)\in\mathcal{C}\subseteq\mathfrak{C}$, $\Gamma\neq\emptyset$, $\displaystyle\bigcup_{(\Gamma,\Delta)\in\mathcal{C}}\Gamma\neq\emptyset$. Let $\alpha\in\displaystyle\bigcup_{(\Gamma,\Delta)\in\mathcal{C}}\Gamma$. This implies that there exists $(\Gamma,\Delta)\in\mathcal{C}$ such that $\alpha\in\Gamma$. Since $(\Gamma,\Delta)$ is weakly complementary, there exists $\beta\in\Delta\subseteq\displaystyle\bigcup_{(\Gamma,\Delta)\in\mathcal{C}}\Delta$ such that $C_\vdash(\{\alpha,\beta\})=\lang$. Thus, $\left(\displaystyle\bigcup_{(\Gamma,\Delta)\in\mathcal{C}}\Gamma,\displaystyle\bigcup_{(\Gamma,\Delta)\in\mathcal{C}}\Delta\right)$ is weakly complementary in $\mathcal{S}$. Next, to show that $\left(\displaystyle\bigcup_{(\Gamma,\Delta)\in\mathcal{C}}\Gamma\right)\bigcap\left(\displaystyle\bigcup_{(\Gamma,\Delta)\in\mathcal{C}}\Delta\right)=\emptyset$, suppose the contrary. Let $\alpha\in\left(\displaystyle\bigcup_{(\Gamma,\Delta)\in\mathcal{C}}\Gamma\right)\bigcap\left(\displaystyle\bigcup_{(\Gamma,\Delta)\in\mathcal{C}}\Delta\right)$. So, $\alpha\in\displaystyle\bigcup_{(\Gamma,\Delta)\in\mathcal{C}}\Gamma$, which implies that there exists $(\Gamma,\Delta)\in\mathcal{C}$ such that $\alpha\in\Gamma$. Similarly, since $\alpha\in\displaystyle\bigcup_{(\Gamma,\Delta)\in\mathcal{C}}\Delta$, there exists $(\Gamma^\prime,\Delta^\prime)\in\mathcal{C}$ such that $\alpha\in\Delta^\prime$. Now, since $\mathcal{C}$ is a chain, either $(\Gamma,\Delta)\preceq(\Gamma^\prime,\Delta^\prime)$ or $(\Gamma^\prime,\Delta^\prime)\preceq(\Gamma,\Delta)$. We can assume, without loss of generality, $(\Gamma,\Delta)\preceq(\Gamma^\prime,\Delta^\prime)$. Hence, $\Gamma\subseteq\Gamma^\prime$, which implies that $\alpha\in\Gamma^\prime$. Thus, $\alpha\in\Gamma^\prime\cap\Delta^\prime$. However, since $(\Gamma^\prime,\Delta^\prime)\in\mathcal{C}\subseteq\mathfrak{C}$, $\Gamma^\prime\cap\Delta^\prime=\emptyset$. This is a contradiction. Hence, $\left(\displaystyle\bigcup_{(\Gamma,\Delta)\in\mathcal{C}}\Gamma\right)\bigcap\left(\displaystyle\bigcup_{(\Gamma,\Delta)\in\mathcal{C}}\Delta\right)=\emptyset$. So, $\left(\displaystyle\bigcup_{(\Gamma,\Delta)\in\mathcal{C}}\Gamma,\displaystyle\bigcup_{(\Gamma,\Delta)\in\mathcal{C}}\Delta\right)\in\mathfrak{C}$. Clearly, $(\Gamma,\Delta)\preceq\left(\displaystyle\bigcup_{(\Gamma,\Delta)\in\mathcal{C}}\Gamma,\displaystyle\bigcup_{(\Gamma,\Delta)\in\mathcal{C}}\Delta\right)$ for each $(\Gamma,\Delta)\in\mathcal{C}$. Thus, $\left(\displaystyle\bigcup_{(\Gamma,\Delta)\in\mathcal{C}}\Gamma,\displaystyle\bigcup_{(\Gamma,\Delta)\in\mathcal{C}}\Delta\right)$ is an upper bound of $\mathcal{C}$ in $\mathfrak{C}$. Since $\mathcal{C}$ was an arbitrary chain in $(\mathfrak{C},\preceq)$, this proves that every chain in $(\mathfrak{C},\preceq)$ has an upper bound. Hence, by Zorn's lemma, we conclude that $(\mathfrak{C},\preceq)$ has a maximal element. 
    
    Let $(\Pi,\Lambda)\in\mathfrak{C}$ be such a maximal element. We claim that $\Pi\cup\Lambda=\lang$. Suppose the contrary. Then, there exists $\gamma\in\lang\setminus(\Pi\cup\Lambda)$. Since gECQ holds in $\mathcal{S}$, there exists $\delta\in\lang$ such that $C_\vdash(\{\gamma,\delta\})=\lang$. Moreover, since there does not exist any $\varphi\in\lang$ such that $C_\vdash(\varphi)=\lang$, $\delta\neq\gamma$. We fix such a $\delta$. There are now three possible cases.
    
    \textsc{Case 1:} $\delta\in\Lambda$. (Then, $\delta\notin\Pi$ since $\Pi\cap\Lambda=\emptyset$.) Let $\Pi^\prime=\Pi\cup\{\gamma\}$. Clearly, $\Pi^\prime\neq\emptyset$. Since $(\Pi,\Lambda)$ is weakly complementary and $C_{\vdash}(\{\gamma,\delta\})=\lang$, for every $\alpha\in\Pi^\prime$, there exists $\beta\in\Lambda$ such that $C_{\vdash}(\{\alpha,\beta\})=\lang$. So, $(\Pi^\prime,\Lambda)$ is weakly complementary. Finally, since $\Pi\cap\Lambda=\emptyset$ and $\gamma\notin\Pi\cup\Lambda$, $\Pi^\prime\cap\Lambda=\emptyset$. Thus, $(\Pi^\prime,\Lambda)\in\mathfrak{C}$. Now, since $\Pi\subsetneq\Pi^\prime$, $(\Pi,\Lambda)\preceq(\Pi^\prime,\Lambda)$ but $(\Pi,\Lambda)\neq(\Pi^\prime,\Lambda)$. This contradicts the maximality of $(\Pi,\Lambda)$ in $(\mathfrak{C},\preceq)$.

    \textsc{Case 2:} $\delta\in\Pi$. (Then, $\delta\notin\Lambda$, since $\Pi\cap\Lambda=\emptyset$.) We know that $\Pi\neq\emptyset$. Let $\Lambda^\prime=\Lambda\cup\{\gamma\}$. By the same arguments as in Case 1, $(\Pi,\Lambda^\prime)$ is weakly complementary, and $\Pi\cap\Lambda^\prime=\emptyset$. Thus, $(\Pi,\Lambda^\prime)\in\mathfrak{C}$. Again, since $\Lambda\subsetneq\Lambda^\prime$, $(\Pi,\Lambda)\preceq(\Pi,\Lambda^\prime)$ but $(\Pi,\Lambda)\neq(\Pi,\Lambda^\prime)$. This contradicts the maximality of $(\Pi,\Lambda)$ in $(\mathfrak{C},\preceq)$.
    
    \textsc{Case 3:} $\delta\in\lang\setminus(\Pi\cup\Lambda)$. Let $\Pi^\prime=\Pi\cup\{\gamma\}$ and $\Lambda^\prime=\Lambda\cup\{\delta\}$. Clearly, $\Pi^\prime\neq\emptyset$. By the same arguments as in the previous two cases, $(\Pi^\prime,\Lambda^\prime)$ is weakly complementary. Since $\Pi\cap\Lambda=\emptyset$, $\gamma,\delta\notin\Pi\cup\Lambda$, and $\delta\neq\gamma$, $\Pi^\prime\cap\Lambda^\prime=\emptyset$. Then, as $\Pi\subsetneq\Pi^\prime$ and $\Lambda\subsetneq\Lambda^\prime$, $(\Pi,\Lambda)\preceq(\Pi^\prime,\Lambda^\prime)$ but $(\Pi,\Lambda)\neq(\Pi^\prime,\Lambda^\prime)$. This contradicts the maximality of $(\Pi,\Lambda)$ in $(\mathfrak{C},\preceq)$.

    Since we arrive at a contradiction in each of the above three possible cases, we conclude that $\Pi\cup\Lambda=\lang$.
\end{proof}

\begin{rem}
    We note, from the above theorem, that, given a logic $\mathcal{S}=\langle\lang,\vdash\rangle$ that satisfies the conditions of the theorem, if gECQ holds, then $\lang$ can be partitioned into two disjoint pieces, viz., the sets $\Pi,\Lambda$ of a maximal element $(\Pi,\Lambda)\in\mathfrak{C}$ such that each member of $\Pi$ has a corresponding member in $\Lambda$, with which it explodes. Moreover, the maximality of $(\Pi,\Lambda)$, together with the fact that $\Pi\cap\Lambda=\emptyset$, implies that there does not exist $\alpha,\beta\in\Pi$ such that $C_\vdash(\{\alpha,\beta\})=\lang$. Hence, we can think of $\mathcal{S}$ as being `locally consistent,' as witnessed by $\Pi$.

    We can also conclude, from the above theorem, that if the partially ordered set $(\mathfrak{C},\preceq)$ has no such maximal element $(\Pi,\Lambda)$ such that $\Pi\cup\Lambda=\lang$, then gECQ fails in $\mathcal{S}$.
\end{rem}

We end this section with a concept of paraconsistency arising from the failure of a special case of parECQ in which $\Gamma=\lang$. This is defined below.

\begin{dfn}\label{dfn:K-paraconsistent}
    Suppose $\mathcal{S}=\langle\lang,\vdash\rangle$ is a logic, and $K\subseteq\lang$. $\mathcal{S}$ is said to be \emph{$K$-paraconsistent} if there exists $\alpha\in\lang$ such that $C_\vdash(\{\alpha,\beta\})\neq\lang$ for all $\beta\in K$.
\end{dfn}

The following are some easy observations that follow from the above definition.

\begin{thm}\label{thm:K-paraconsistent}
Suppose $\mathcal{S}=\langle\lang,\vdash\rangle$ is a logic.
    \begin{enumerate}[label=(\roman*)]
    \item $\mathcal{S}$ is $\emptyset$-paraconsistent.
    \item $\mathcal{S}$ is $\lang$-paraconsistent iff it is $NF$-paraconsistent.
    \item $\mathcal{S}$ is $K$-paraconsistent for some $K\subseteq\lang$ implies that it is $K^\prime$-paraconsistent for every $K^\prime\subseteq K$.
\end{enumerate}
\end{thm}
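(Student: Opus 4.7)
The three parts are essentially definitional, so my plan is to unpack Definition \ref{dfn:K-paraconsistent} in each case and note how the witness carries over.

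For part (i), I would observe that the condition ``$C_\vdash(\{\alpha,\beta\})\neq\lang$ for all $\beta\in\emptyset$'' is vacuously true, so any $\alpha\in\lang$ serves as a witness (assuming, as is standing convention in the paper, that $\lang$ is nonempty). No case analysis is needed.

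For part (ii), I would prove the two directions by simply matching the quantifiers. If $\mathcal{S}$ is $\lang$-paraconsistent, then by Definition \ref{dfn:K-paraconsistent} with $K=\lang$ there exists $\alpha\in\lang$ such that $C_\vdash(\{\alpha,\beta\})\neq\lang$ for every $\beta\in\lang$, which is literally the statement that gECQ fails, i.e., that $\mathcal{S}$ is NF-paraconsistent. The converse direction is identical after reading the same equivalence in reverse.

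For part (iii), I would take a witness $\alpha\in\lang$ to $K$-paraconsistency, so that $C_\vdash(\{\alpha,\beta\})\neq\lang$ for all $\beta\in K$, and then note that whenever $K^\prime\subseteq K$, the same $\alpha$ continues to witness $K^\prime$-paraconsistency because every $\beta\in K^\prime$ also lies in $K$. This is just monotonicity of the universal quantifier in the defining clause and requires no further machinery.

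There is no serious obstacle here; the only ``content'' is making sure the definitions line up cleanly in part (ii), and tracking that the same witness $\alpha$ can be reused in part (iii).
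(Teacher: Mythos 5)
Your proposal is correct and matches the paper's treatment: the paper simply states that all three parts follow straightforwardly from Definition~\ref{dfn:K-paraconsistent}, and your unpacking (vacuous quantification for (i), literal coincidence of the defining clause with the failure of gECQ for (ii), and restriction of the universal quantifier to a subset for (iii)) is exactly the intended argument.
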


\begin{proof} These statements follow straightforwardly from the definition of $K$-paraconsistency.
\end{proof}

\begin{rem}
    The set $K\subseteq\lang$, in Definition \ref{dfn:K-paraconsistent}, can be varied to obtain different `degrees of paraconsistency,' somewhat like the degrees of unsolvability in Computability theory. More precisely, we can say that a logic $\mathcal{S}_1=\langle\lang,\vdash_1\rangle$ is more paraconsistent than a logic $\mathcal{S}_2=\langle\lang,\vdash_2\rangle$ if there exists a $K\subseteq\lang$ such that $\mathcal{S}_1$ is $K$-paraconsistent, but $\mathcal{S}_2$ is not. We plan to expand on this in future articles.
\end{rem}

\section{Conclusions}

In this paper, we have extended our earlier work on negation-free paraconsistency in \cite{BasuRoy2022}. The earlier article introduced the NF-paraconsistency as the failure of the generalized explosion principle gECQ fails. Several new examples of NF-paraconsistent logics have been provided here, the most notable of which are the ones involving variable inclusion logics, an area of research that is seeing a lot of interest lately. 

We have included the discussion on a quasi-negation, defined in \cite{BasuRoy2022} using the notion of NF-paraconsistency. This should be studied further, in general, and for specific logics. In the latter case, this can lead to characterizations of the sets of quasi-negations of a formula in these logics. In the case of logics with algebraic or topological interpretations, such as classical and intuitionistic propositional logics, quasi-negation can also be studied algebraically or topologically.

We have then focused on principles of explosion and, in the process, further generalized gECQ. Therefore, it is established that one can indeed talk about explosion in a broader sense without necessarily involving any logical connectives/operators. We have shown the interconnections, or the lack thereof, among the various principles of explosion. It is now imperative that we introduce a notion of paraconsistency via the failure of each of the distinct principles of explosion presented here. However, we have left this for future work. Further investigation is also required to find more examples of logics that satisfy, or fail, one or more of these new principles of explosion. In this connection, LFIs that are not finitely trivializable or not finitely gently explosive might be particularly interesting to study. As mentioned in the introductory section, we strongly believe that there are other principles of explosion possible. This needs to be investigated further.

We have introduced a couple of principles of partial explosion, which turned out to be equivalent. The goal with these is to localize consistency in a logic. Further work is needed, not only to find examples but also to see if other such principles are possible. The idea of $K$-paraconsistency was formulated as the failure of a particular case of the principles of partial explosion. As indicated at the end of the previous section, this needs to be rigorously studied, as well, to see if it can be used to compare paraconsistent logics.

Another interesting future project would be to connect the present study to Abstract Algebraic Logic (AAL). In this regard, we hope to find similarities between the present work with the research on inconsistency lemmas and antitheorems in AAL (discussed in \cite{Raftery2013}).

\section*{Acknowledgement}
The authors wish to express their gratitude to Prof.\ Mihir K.\ Chakraborty for his encouragement and advice during multiple discussions on earlier versions of the article.

\bibliographystyle{amsplain}
\bibliography{gexp}
\end{document}